\numberwithin{equation}{section}
\theoremstyle{theorem}
\newtheorem{theorem}{Theorem}
\newtheorem*{EulerInit}{Initial Data for the rE System}
\newtheorem{proposition}{Proposition}[section]
\newtheorem{lemma}[proposition]{Lemma}
\newtheorem{corollary}[proposition]{Corollary}
\newtheorem{conjecture}{Conjecture}
\theoremstyle{definition}
\newtheorem{remark}{Remark}[section]
\theoremstyle{remark}
\newcommand{\barx}{\bar{x}}
\newcommand{\barp}{\bar{P}}
\newcommand{\barq}{\bar{Q}}
\newcommand{\phat}{\hat{P}}
\newcommand{\qhat}{\hat{Q}}
\newcommand{\eqdef}{\overset{\mbox{\tiny{def}}}{=}}
\newcommand{\pressure}{p}
\newcommand{\Ent}{\eta} % entropy
\newcommand{\Temp}{\theta} % temperature
\newcommand{\effRE}{F_{R;\varepsilon}} %remainder term
\begin{document}
\title[Hilbert Expansion from relativistic Boltzmann to Euler]{Hilbert Expansion from the Boltzmann equation to relativistic Fluids}

\author{Jared Speck}
\thanks{J.S. was supported by the Commission of the European Communities, \\ 
\indent ERC Grant Agreement No 208007.}
\address{(JS) University of Cambridge, Department of Pure Mathematics \& Mathematical Statistics,
Wilberforce Road, Cambridge, CB3 0WB, United Kingdom}
\email{jspeck@math.princeton.edu}

\author{Robert M. Strain}
\thanks{R.M.S. was supported in part by the NSF grant DMS-0901463.}
\address{(RMS) University of Pennsylvania, Department of Mathematics, David Rittenhouse
Lab, 209 South 33rd Street, Philadelphia, PA 19104-6395, USA} 
\email{strain at math.upenn.edu}
\urladdr{http://www.math.upenn.edu/~strain/}

\begin{abstract}
We study the local-in-time hydrodynamic limit of the relativistic Boltzmann equation using a Hilbert expansion. 
More specifically, we prove the existence of local solutions to the relativistic Boltzmann equation that are nearby the local relativistic Maxwellian constructed from a class of solutions to the relativistic Euler equations that includes a large 
subclass of near-constant, non-vacuum fluid states. In particular, for small Knudsen number, these solutions to the relativistic Boltzmann equation have dynamics that are effectively captured by corresponding solutions to the relativistic Euler equations. 
\end{abstract}

%\date{\today}
%\date{September 24, 2010}

\maketitle

\setcounter{tocdepth}{2}
\tableofcontents

\thispagestyle{empty}

\section{Introduction and main results} \label{S:Intro}

The special relativistic Boltzmann (rB from now on) equation provides a statistical description of a gas of relativistic particles that are interacting through binary collisions in Minkowski space, which we denote by $M.$ The dynamic variable is the one-particle empirical measure  $F^\varepsilon \geq 0,$ which represents the average number of particles of four-momentum $P$ at each space-time point $x \in M.$ The four-momentum $P$ of a particle of rest mass $m_0$ is future-directed\footnote{In the inertial coordinate system we use throughout this article, future-directed vectors $P$ satisfy $P^0 > 0.$} and satisfies the normalization condition $P_{\kappa}P^{\kappa} = - m_0^2 c^2,$ where the constant $c$ denotes the speed of light. Consequently,
%, on a naive level, 
we may view $F$ as a function of time $t \in \mathbb{R},$ space $\barx \in \mathbb{R}^3,$ and 3-momentum $\barp \eqdef (P^1,P^2,P^3) \in \mathbb{R}^3,$ with\footnote{The formula for $P^0$ holds only in a coordinate system in which the spacetime metric $g$ has the components $g_{\mu \nu} = \mbox{diag}(-1,1,1,1).$} 
$$
P^0 = \sqrt{m_0^2c^2 + \sum_{a=1}^3 (P^a)^2}.
$$ 
A more geometric point of view is offered in Section \ref{SS:massshell}, where it is explained how to view $F$ as function on the \emph{mass shell} 
$$
\mathfrak{M} \eqdef \lbrace (x,P) \in M \times T_x M \ | \ P_{\kappa} P^{\kappa}= - m_0^2 c^2, \ P \ \mbox{is future-directed} \rbrace,
$$ 
which is a submanifold of $TM,$ the tangent bundle of $M,$ and which is diffeomorphic to $\mathbb{R}^4 \times \mathbb{R}^3.$

The relativistic Boltzmann equation (rB from now on) in the unknown $F^\varepsilon$ is
\begin{align} \label{E:rBintro}
P^\kappa \partial_\kappa F^\varepsilon
=
\frac{1}
{\varepsilon}
\mathcal{C}(F^\varepsilon, F^\varepsilon),
\end{align}
%=
%\frac{
%\mathcal{C}(F^\varepsilon, F^\varepsilon)
%}
%{\varepsilon}.
where $\mathcal{C}(\cdot,\cdot)$ is the \emph{collision operator} (defined in \eqref{collisionCMintro}), and the dimensionless parameter $\varepsilon$ is the \emph{Knudsen number}. It is the ratio of the particle mean free path to a characteristic  (physical) length scale.  Intuitively, when $\varepsilon$ is small, the continuum approximation of fluid mechanics is expected to be valid. In this setting, we anticipate that the system of particles can be faithfully modeled through the use of macroscopic quantities, such as \emph{pressure}, \emph{proper energy density}, etc., whose evolution is prescribed by the equations of relativistic fluid mechanics, that is, the relativistic Euler equations.

As a first rigorous step in this direction, we show that any sufficiently regular solution 
$\big(n(t,\barx),\Temp(t,\barx),u(t,\barx)\big)$ of the relativistic Euler system (rE from now on) satisfying the technical conditions \eqref{juttnerB} can be used to construct a corresponding family of classical solutions $F^\varepsilon$ of the rB equation. Here, $\Temp$ denotes the the fluid \emph{temperature}, $n$ denotes the fluid \emph{proper number density}, and  $u$ denotes the fluid \emph{four-velocity}. Roughly speaking, the technical conditions are the assumption that $\Temp(t,\barx)$ is uniformly positive with only mild fluctuations, that $n(t,\barx)$ is uniformly bounded from above and below away from $0,$ and that the spatial components of the four-velocity, namely $u^1(t,\barx), u^2(t,\barx),$ and $u^3(t,\barx),$ are uniformly small. In Lemma \ref{L:nearconstantstates}, we give a simple proof that these conditions are always satisfied near the constant fluid states. Under these assumptions, our estimates show that that as $\varepsilon \to 0^+$ (in the hydrodynamic limit),
the rB solution $F^\varepsilon$ converges to the \emph{relativistic Maxwellian}\footnote{These are also known as \emph{J\"{u}ttner distributions}.} $\mathcal{M}$ associated to the solution of the rE system; see Section \ref{SS:Maxwellians} for the definition of a Maxwellian, which is a function in equilibrium with the collision process; i.e., $\mathcal{C}(\mathcal{M},\mathcal{M}) = 0.$ Thus, our results show that for small $\varepsilon,$ there are near-local equilibrium solutions to the rB equation whose underlying dynamics are effectively captured by the rE system. For the Newtonian Boltzmann equation, these are called ``normal solutions'' in Grad \cite{MR0135535}.

%tWe remark that one current limitation of these methods is that they require a \emph{uniform positive temperature condition}. That is, we assume that our rE solution is such that the temperature $\Temp(t,\barx)$ is uniformly bounded from below on the spacetime slab $[0,T]\times \mathbb{R}^3$ of interest. 

The main strategy of our proof is to perform a \emph{Hilbert expansion} (see Section \ref{SS:HilbertExpansion}) for 
$F^\varepsilon$.  We write 
\begin{equation}
F^\varepsilon = F_0+\sum_{k = 1}^6 \varepsilon^k F_k + \varepsilon^3 \effRE.
\label{hilbertE}
\end{equation}
Inserting this expansion into \eqref{E:rBintro} and equating like powers of $\varepsilon$ results in a hierarchy of equations. It turns out that $F_0, \ldots, F_6,$ which do not depend on $\varepsilon,$ can be solved for: $F_0$ must be Maxwellian, while $F_1, \ldots, F_6$ solve linear equations with inhomogeneities. Thus all of the difficult (and $\varepsilon-$dependent) analysis is contained in the analysis of the \emph{remainder term} $\effRE$, which is carried out in Section \ref{SS:rBestimates}. 

Our methods and results can be viewed as an extension of the program initiated by Caflisch \cite{MR586416}, who proved analogous results for the non-relativistic Boltzmann equation and Euler equations \cite{MR586416}. 
We will  utilize strategies from Caflisch as
well as  Guo \cite{G2} and 
Guo-Jang-Jiang
 \cite{MR2472156} to perform the Hilbert expansion.  We will also use relativistic Boltzmann estimates from the work of the second author \cite{strainSOFT,strainNEWT}  in order to control the expansion. Additionally, we develop several necessary tools to study this problem in the setting of special relativity. In particular, we develop a mathematical theory for the kinetic equation of state, which is described just below. A more detailed discussion of the existing literature related to our result is located in Section \ref{SS:History}.

Before proving the aforementioned results, we will sketch a proof of local existence (in Section \ref{S:rE}) for the relativistic Euler equations. This local existence result ensures that there are in fact solutions to the rE system that can be used in the aforementioned construction. However, we mention upfront that during the course of our investigation, we ran into several technical difficulties that, to our surprise, seem to be unresolved in the literature. The first concerns the fundamental question of which fluid variables can be used as state-space variables in the rE system. In addition to the four-velocity $u,$ there are five other fluid variables that play a role in the ensuing discussion: the aforementioned variables $n$ and $\Temp,$ together with the \emph{entropy per particle} $\Ent,$ the \emph{pressure} $p,$ and the \emph{proper energy density} $\rho.$ In order to close the Euler equations, one must assume relations between the
fluid variables. In this article, we assume that the three relations \eqref{E:PnTEulerrelationintro} - \eqref{E:nEntzrelationintro} hold between the five non-negative variables $n, \Temp, \Ent, \pressure,$ and $\rho.$ As will be discussed below, these choices were not made arbitrarily, but are in fact satisfied by the macroscopic quantities $ n[\mathcal{M}], \Temp[\mathcal{M}], \Ent[\mathcal{M}], \pressure[\mathcal{M}],$ and $\rho[\mathcal{M}]$ corresponding to a relativistic Maxwellian $\mathcal{M};$ these quantities are defined in Section \ref{SS:Macroscopicquantities}. We emphasize that \textbf{the relations \eqref{E:PnTEulerrelationintro} - \eqref{E:nEntzrelationintro} are required in order for the rE system to arise from the rB equation in the hydrodynamic limit}. Now it is commonly \emph{assumed} 
that as a consequence of the three relations, any two of $n, \Temp, \Ent, \pressure, \rho$ uniquely determine the remaining
three. In particular, during our construction of the fluid solutions, we need to be able to go back and forth between the variables $(n, \Temp)$ and the variables $(\Ent, \pressure),$ i.e., we need to be able invert the smooth map 
$(n,z) \rightarrow \big(\mathfrak{H}(n,z), \mathfrak{P}(n,z) \big),$ where $z = \frac{m_0c^2}{k_B \Temp},$ 
$k_B > 0$ denotes \emph{Boltzmann's constant}, and $\mathfrak{H}$ and $\mathfrak{P}$ are defined in \eqref{E:changeofstatespacevariablesH} - \eqref{E:changeofstatespacevariablesP} below. However, we were unable to find a fully rigorous proof of the invertibility of this map in the literature. Consequently, in Lemma \ref{L:invertiblemaps} below, we use asymptotic expansions for Bessel functions to rigorously verify the local invertibility of this map outside of a compact set. In particular, we show that the map is locally invertible whenever $\Temp$ is sufficiently large, and whenever $\Temp$ is sufficiently small and positive. Additionally, the numerical plot in Figure \ref{fig:partialppartialz} (see Section \ref{SS:SolveforTemp}), which covers the compact set in question, strongly suggests that the map $(n,z) \rightarrow (\mathfrak{H}(n,z),\mathfrak{P}(n,z)) = (\Ent, \pressure),$ is an auto-diffeomorphism of the region $(0, \infty) \times (0, \infty).$ This would imply that  we
  can always smoothly transform back and forth between $(n, \Temp)$ and $(\Ent, \pressure)$ in the region of physical relevance, i.e., the region in which all of the quantities are positive; see Conjecture \ref{C:Tempisgood} in Section \ref{S:rE} below.  Similarly, in Lemma \ref{L:speedofsound}, we rigorously prove that outside of the same compact set of temperature values, there exists a \emph{kinetic equation of state} $\pressure = f_{kinetic}(\Ent,\rho),$ which gives the fluid \emph{pressure} $\pressure$ as a function of the \emph{entropy per particle} $\Ent$ and the \emph{proper energy density} $\rho.$ 

A related issue is the fact that in order for the rE system to be hyperbolic and causal\footnote{By causal, we mean that the speed of sound is less than the speed of light.} under a general equation of state $p = f(\Ent,\rho),$ it is necessary and sufficient to prove that $0 < \left. \frac{\partial f}{\partial \rho} \right|_{\Ent} < 1.$ We explain why the positivity of $\left. \frac{\partial f}{\partial \rho} \right|_{\Ent}$ is needed for our proof of local existence in Remark \ref{R:hyperbolicity} below, while the mathematical connection between the inequality $\left. \frac{\partial f}{\partial \rho} \right|_{\Ent} < 1$ and the speed of sound propagation being less than the speed of light is explained in e.g. \cite{jS2008a}. Now in the case of the kinetic equation of state $\pressure = f_{kinetic}(\Ent,\rho),$ $\left. \frac{\partial f_{kinetic}}{\partial \rho} \right|_{\Ent}$ can be written as a function of $\Temp$ alone. It is possible to write down
a closed form expression for this latter quantity (see equation \eqref{E:inversespeedofsoundsquaredBessel}), but since the formula is a rather complicated one involving ratios of Bessel functions, we have only analytically verified the inequality $0 < \left. \frac{\partial f_{kinetic}}{\partial \rho} \right|_{\Ent} < 1$ (again using asymptotic expansions for Bessel functions) outside of the same compact set discussed in the previous paragraph; see Lemma \ref{L:speedofsound}. Therefore, the fully rigorous version of our local existence result is currently limited to initial data whose temperature
avoids the compact set in question. However, we have numerically observed that in fact, the stronger inequality $0 < \left. \frac{\partial f_{kinetic}}{\partial \rho} \right|_{\Ent} < \frac{1}{3}$ should hold for \emph{all} $\Temp > 0;$ see Conjecture \ref{C:Speedisreal} in Section \ref{C:Speedisreal}, and Figure \ref{fig:toucan} in Section \ref{SS:hyperbolicity}. 
%We remark that 
This stronger inequality would imply that the speed of sound under the kinetic equation of state is never larger than $\sqrt{1/3}$ times the speed of light. 

In view of these complications, when stating the hypotheses for our local existence theorem (Theorem \ref{MainThmEuler} in Section \ref{SS:Mainresults}), we make careful assumptions on the fluid initial data that are designed to ensure that they fall within the regime of hyperbolicity, and within a regime in which the aforementioned map $(n, \Temp) \rightarrow (\Ent, \pressure)$ is invertible with smooth inverse. However, if our two conjectures are in fact correct, then the relations \eqref{E:PnTEulerrelationintro} - \eqref{E:nEntzrelationintro} imply that many of these assumptions are automatically verified
whenever the fluid variables are positive. Aside from these complications, our local existence theorem is a standard result. However, there are several additional aspects of it that are worthy of mention. First, we avoid the use of symmetrizing variables in our proof. We instead use the framework of \emph{energy currents}, which was first applied by Christodoulou to the rE system in \cite{dC2007}, and which was later expounded upon by the first author in \cite{jS2008a}. We also remark that our local existence result only applies to initial data with proper energy density $\rho$ that is \emph{uniformly positive}. In particular, we avoid addressing the complicated issue of the free-boundary problem for the relativistic Euler equations.
%; one would have to grapple with this difficulty in order to study compactly supported fluids. 
A related comment is that our local existence result produces a spacetime slab $[0,T] \times \mathbb{R}^3$ on which the uniform positivity property is preserved. 
We remark that in view of the assumptions on $n, \Temp$ mentioned near the beginning of the article, we will only study fluid solutions belonging to \emph{compact} subsets of the regions of interest to us, that is, regions where the maps $\mathfrak{H}$ and $\mathfrak{P}$ are rigorously known to be invertible. Thus, on such compact subsets, the uniform positivity of $\rho$ is an automatic consequence of the continuity of the map $(n,z) \rightarrow \rho,$ which is implicitly defined by the relations
\eqref{E:PnTEulerrelationintro} - \eqref{E:nrhoprelationintro}.

\subsection{Notation and conventions} \label{SS:Notation}
We now summarize some notation and conventions that are used throughout the article. $M$ denotes Minkowski space, while $\mathfrak{M}$ denotes the mass shell. In general, Latin (spatial) indices $a,b,j,k,$ etc., take on the values $1,2,3,$ while Greek indices $\kappa, \lambda, \mu, \nu,$ etc., take on the values $0,1,2,3.$
Indices are raised and lowered with the Minkowski metric $g_{\mu \nu}$ and its inverse $(g^{-1})^{\mu \nu}.$ For most of the article, we work in a fixed inertial coordinate system on $M,$ in which case \begin{eqnarray} \label{E:spacetimemetric}
	g_{\mu \nu} = (g^{-1})^{\mu \nu} = \mbox{diag}(-1,1,1,1),
\end{eqnarray}
and $P^{\kappa}Q_{\kappa} = - P^0 Q^0 + \sum_{a=1}^3 P^a Q^a.$ Here and throughout, we use Einstein's summation convention that repeated indices, with one``up" and one ``down" are summed over.

When differentiating with respect to state-space variables, we use the notation 
\begin{align}
\notag
	\partial_U|_{V},
\end{align}
to mean partial differentiation with respect to the quantity $U$ while $V$ is held constant.
We define
$\barx = (x^1,x^2,x^3)$,
$\barp \eqdef (P^1,P^2,P^3),$ $P^0 \eqdef (m_0^2 c^2 + |\barp|^2)^{1/2},$ and $|\bar{P}|^2 \eqdef \sum_{a=1}^3 (P^a)^2$.
Furthermore $\barq, \barp', \barq'$ are treated similarly.
We also define 
$\phat \eqdef (P^0)^{-1} \barp,$ and similarly for $\qhat.$
We use the symbol 
$$
\partial_{\barx} \eqdef \Big(\frac{\partial}{\partial x^1}, \frac{\partial}{\partial x^2}, \frac{\partial}{\partial x^3} \Big) =(\partial_1, \partial_2, \partial_3),
$$ 
to denote 
the \emph{spatial} coordinate gradient. The Sobolev norm $\| \cdot \|_{H_{\barx}^N}$ of a Lebesgue measurable function $f(\barx)$ on $\mathbb{R}^3$
is defined in the usual way:
\begin{align}     \label{E:HNnormdef}
	\| f \|_{H_{\barx}^N}
 		\eqdef \left( \sum_{|\vec{\alpha}| \leq N} \| \partial_{\vec{\alpha}}f \|_{L_{\barx}^2}^2 \right)^{1/2},
\end{align}
where 
$
\partial_{\vec{\alpha}} = \partial_1^{n_1} \partial_2^{n_2} \partial_3^{n_3},
$
${\vec{\alpha}}=(n_1,n_2,n_3)$ is a \emph{spatial} coordinate-derivative multi-index, and 
$|\vec{\alpha}| = n_1 + n_2 + n_3.$ Here and throughout, we use the abbreviation $H_{\barx}^N \eqdef H_{\barx}^N(\mathbb{R}_{\barx}^3).$ We also define the $L_{\barp}^2$ inner product of two functions $F(\barp), G(\barp)$ as follows:
\begin{align}
	\langle F, G \rangle_{\barp} \eqdef \int_{\mathbb{R}^3_{\barp}} F(\barp) G(\barp) d\barp.
	\label{L2inner}
\end{align}
For brevity, we sometimes write
$\langle \cdot, \cdot \rangle \eqdef \langle \cdot, \cdot \rangle_{\barp}$.
The $L^2(\mathbb{R}^3_{\barp})$ norm is denoted $|\cdot |_2$.  
Similarly, we define the $L_{\barx;\barp}^2$ inner product of two functions $F(\barx, \barp), G(\barx, \barp)$ as
\begin{align}
	\langle F, G \rangle_{\barx;\barp} \eqdef \int_{\mathbb{R}^3_{\barx}}  \int_{\mathbb{R}^3_{\barp}} F(\barx,\barp) G(\barx,\barp) d \barx d \barp.
	\notag
\end{align}
We denote the corresponding norm by %$\|\cdot \|_2$, e.g.
$
\| f \|_{2}
\eqdef
\| f \|_{H^0(\mathbb{R}^3_{\barx} \times \mathbb{R}^3_{\barp})}
=  \| f \|_{L^2(\mathbb{R}^3_{\barx} \times \mathbb{R}^3_{\barp})}.
$
We furthermore define the norm
\begin{equation}
\notag
\| h\|_\infty
\eqdef
{\rm ess~sup}_{x\in \mathbb{R}^3_{\barx}, p\in\mathbb{R}^3_{\barp}}  |h(\barx,\barp)|.
%\label{normNU}
\end{equation}
For each $\ell \geq 0,$ we also define the weight function $w_\ell$ as
\begin{equation}
w_\ell =
w_\ell(\barp) \eqdef 
\left(1+|\barp|^2\right)^{\ell /2}. \label{weight}
\end{equation}
We then define a corresponding weighted $L^\infty$ norm by
\begin{equation}
\notag
\| h\|_{\infty,\ell}
\eqdef
{\rm ess~sup}_{x\in \mathbb{R}^3_{\barx}, p\in\mathbb{R}^3_{\barp}}  | w_\ell(\barp)  h(\barx,\barp)|.
%\label{normNU}
\end{equation}
We define the $H_{\barx}^N$ norm of a Lebesgue measurable function $f(\barx)$ over a measurable 
subset $E \subset \mathbb{R}_{\barx}^3$ by
\begin{align}     
\notag
	\| f \|_{H_{\barx}^N(E)}
 		\eqdef \left( \sum_{|\vec{\alpha}| \leq N} \| \partial_{\vec{\alpha}}f \|_{L_{\barx}^2(E)}^2 \right)^{1/2},
\end{align}
where 
\begin{align}\notag
	\| f \|_{L_{\barx}^2(E)} \eqdef \left(\int_{E} |f|^2 \ d \barx \right)^{1/2},
\end{align}
and similarly for the other norms and inner products over a subset. If $X$ is a normed function space, then we use the notation
$C^j([0,T],X)$ to denote the set of $j$-times continuously differentiable maps from $(0,T)$ into
$X$ that, together with their derivatives up to order $j,$ extend continuously to $[0,T].$ We sometimes use the notation $A \lesssim B$ to mean that there exists an inessential uniform constant $C$ such that $A \le C B$. Generally $C$ will denote an inessential uniform constant whose value may change from line to line.  For essential constants, we always write down their dependence explicitly.

\subsection{Lorentzian geometry and the mass shell $\mathfrak{M}$} \label{SS:massshell}

In this article, we primarily work in a fixed \emph{inertial} coordinate system on $M,$ which is a global rectangular coordinate system $\lbrace x^{\mu} \rbrace_{\mu = 0,1,2,3}$ in which the \emph{spacetime metric} $g_{\mu \nu}$ has the form \eqref{E:spacetimemetric}. Note the sign convention of \eqref{E:spacetimemetric}. This is the most common sign convention found in the relativity literature, but it is opposite of the sign convention that is sometimes found in the relativistic Boltzmann literature.

Our coordinate system $\lbrace x^{\mu} \rbrace_{\mu = 0,1,2,3}$ represents a special choice of a ``space-time" splitting. We identify $x^{0}$ with $ct,$ where $c$ is the speed of light and $t$ is time, while we identify $(x^1, x^2, x^3) \eqdef \barx$
with a ``spatial coordinate":
\begin{align*}
	x=(x^0,x^1,x^2,x^3) = (ct,\barx).
\end{align*}
We often work with the coordinate $t$ rather than $x^0.$  Note that $\partial_0 = \frac{1}{c}\partial_t.$

The \emph{mass shell} is a subset of 
$$
T M \eqdef \cup_{x \in M} T_x M,
$$ 
the tangent bundle of $M.$ In the following, we use an inertial coordinate system 
$\lbrace x^{\mu}, Y^{\nu} \rbrace_{\mu, \nu = 0,1,2,3}$  on $TM,$ where $\lbrace x^{\mu} \rbrace_{\mu = 0,1,2,3}$  is the inertial coordinate system on $M,$ and
$$
T_x M =\left\lbrace \left. Y^{\kappa} \frac{\partial}{\partial x^{\kappa}}\right|_x  \ \big| \ (Y^1,Y^2,Y^3,Y^4) \in \mathbb{R}^4 \right\rbrace.
$$
In the above expression, $(Y^1,Y^2,Y^3,Y^4)$ are the coordinates of vectors in $T_x M \simeq \mathbb{R}^4$ relative to the basis $\lbrace \left.\frac{\partial}{\partial x^0}\right|_x, 
\left. \frac{\partial}{\partial x^1}\right|_x, \left. \frac{\partial}{\partial x^2}\right|_x, 
\left. \frac{\partial}{\partial x^3}\right|_x \rbrace.$ The mass shell $\mathfrak{M}$ is defined to be
\begin{align}
	\mathfrak{M} \eqdef \lbrace (x,P) \in TM \ | \ P_{\kappa} P^{\kappa} = - m_0^2c^2 \ \mbox{and} \ P^0 > 0 \rbrace. 
	\nonumber
\end{align}
Let us also define
\begin{align}
	\mathfrak{M}_x \eqdef \mathfrak{M} \cap T_x M,
	\nonumber
\end{align}
and
\begin{align*}
	\phi(\bar{P}) \eqdef P^0 = \sqrt{m_0^2c^2 + |\bar{P}|^2}.
	%, \qquad |\bar{P}|^2 \eqdef \sum_{a=1}^3 (P^a)^2.
\end{align*} 
It follows that the map $\Phi: \mathbb{R}^3 \rightarrow \mathfrak{M}_x$ defined by
\begin{align}
	\Phi(P^1,P^2,P^3) \eqdef (\phi(P^1,P^2,P^3),P^1,P^2,P^3),
	\nonumber
\end{align}
is a diffeomorphism, and we can use it to put coordinates on $\mathfrak{M}_x;$ i.e., any element of 
$\mathfrak{M}_x,$ viewed as a submanifold of $T_x M,$ has components $ (P^0,P^1,P^2,P^3) 
= (\phi(\bar{P}), \bar{P})$ relative to our rectangular coordinate system. It follows that \\
$\lbrace x^{\mu}, P^j \rbrace_{\mu =0,1,2,3; j=1,2,3}$ is a global coordinate system on $\mathfrak{M}.$ If $(x,P)$ is an element of $\mathfrak{M},$ then we often slightly abuse notation by identifying $(x,P)$ with $(t,\barx,\bar{P}).$ We similarly identify $F(x,P)$ with $F(t,\barx,\barp).$

We recall that there is a canonical measure
\begin{align} \label{E:gbarvolumeform}
	d \mu_{\bar{g}} \eqdef 
	%d \mu_{\bar{g}} = 
	\sqrt{|\mbox{det}(\bar{g})}| d \barp,
\end{align} 
associated to $\bar{g},$ the first fundamental form of $\mathfrak{M}_x.$ We remark that $\bar{g}$ is Riemannian since $\mathfrak{M}_x$ is a spacelike hypersurface in $T_x M.$ This measure will allow us to define (in a geometrically invariant manner) integration over the surface $\mathfrak{M}_x.$ Recall that since $\mathfrak{M}_x$ is 
(relative to the coordinate system $\lbrace Y^{\nu} \rbrace_{\nu = 0,1,2,3}$ on $T_x M$) the level set 
$$
\mathfrak{M}_x = \lbrace (P^0,P^1,P^2,P^3) \in T_x M \ | \ P^0 - \phi(P^1,P^2,P^3) = 0 \rbrace,
$$ 
it follows that $\bar{g} = \Phi_* g,$ where $\Phi_* g$ is the pullback of $g$ by $\Phi.$ Simple calculations imply that in our inertial coordinate system, we have
\begin{align} \label{E:bargab}
	\bar{g}_{jk} = \frac{\partial \Phi^{\kappa}}{\partial P^j} \frac{\partial \Phi^{\lambda}}{\partial P^k} g_{\kappa 
		\lambda} = - \frac{P^j P^k}{(P^0)^2} + \delta_{jk}, && (j,k=1,2,3).
\end{align}
Using \eqref{E:gbarvolumeform} and \eqref{E:bargab}, we compute that
the canonical measure associated to $\bar{g}$ can be expressed as follows relative to the coordinate system
$\barp$ on $\mathfrak{M}_x:$
\begin{align} \label{E:measureonPx}
	d \mu_{\bar{g}} = \frac{1}{P^0} d \barp,
\end{align} 
where we have used the fact that $|\mbox{det}(\bar{g})| = (P^0)^{-2}.$ 
We remark that \eqref{E:measureonPx} is valid
only in an inertial coordinate system, and that integrals relative to the measure  $d \mu_{\bar{g}}$ will 
play a central role in the definitions and analysis of Section \ref{SS:Macroscopicquantities}.

\subsection{Hypotheses on the collision kernel}

In order to state our hypotheses on the collision kernel, we introduce the following expression for the Boltzmann collision operator, which is local in $(t,\barx)$; we will elaborate upon it in Section \ref{SS:alternateexpression}:
\begin{equation} \label{collisionCMintro}
\mathcal{C}(F,G) 
= P^0
\int_{\mathbb{R}^3\times \mathbb{S}^2} v_{\o} \sigma (\varrho,\vartheta)[F(\barp')G(\barq')-F(\barp)G(\barq)] d \barq d\omega,
\end{equation}
where we have suppressed the dependence of $F$ and $G$ on $(t,\barx)$. Note that the collision operator
acts only on the $P$ variables. In the above expression,
$v_{\o}=v_{\o}(\barp,\barq),$ the \emph{M\o ller velocity}, is defined by
\begin{equation}
v_{\o}=
v_{\o}(\barp,\barq) \eqdef 
\frac{c}{2}\sqrt{\left| \frac{\barp}{P^{0}}-\frac{\barq}{Q^{0}}\right|^2-\frac{1}{c^2}\left| \frac{\barp}{P^{0}}\times\frac{\barq}{Q^{0}}\right|^2}
= \frac{c}{4}\frac{\varrho \sqrt{s}}{P^{0} Q^{0}},
\label{moller}
\end{equation}
where $\times$ denotes the cross product in $\mathbb{R}^3.$ In \eqref{collisionCMintro}, $\sigma$ is the \emph{differential cross-section}, or the \emph{collision kernel}.  The \emph{relative momentum} $\varrho(\barp,\barq)$ is defined by
\begin{gather}
\varrho 
\eqdef
\sqrt{(P^\kappa-Q^\kappa) (P_\kappa-Q_\kappa)}
=\sqrt{-2(P^\kappa Q_\kappa + m_0^2c^2)} \geq 0,
\label{gDEFINITION}
\end{gather} 
while the \emph{scattering angle} $\vartheta(\barp,\barq,\bar{P}',\bar{Q}')$ is defined by
\begin{equation} 
\cos\vartheta
\eqdef
(P^\kappa - Q^\kappa) (P'_{\kappa} -Q'_{\kappa})/\varrho^2.
\label{angle}
\end{equation}
Here the variables $P'$ and $Q'$ are defined in terms of $\barp,\barq$ below in \eqref{postCOLLvelCMsec2}. Finally, $s(\barp,\barq),$ which is defined by $c^2 s =$ the energy in a center-of-momentum frame\footnote{A center-of-momentum frame is a Lorentz frame in which $P^{\mu} + Q^{\mu} = P'^{\mu} + Q'^{\mu} = (\sqrt{s},0,0,0)$}, can be expressed as
\begin{eqnarray}
s =
-(P^\kappa+Q^\kappa) (P_\kappa+Q_\kappa)
=
2\left( -P^\kappa Q_\kappa + m_0^2c^2 \right)\geq 0.
\label{sDEFINITION}
\end{eqnarray}
Notice that $s=\varrho^2 + 4c^2$. We warn the reader that this notation, which is used in \cite{sdGsvLwvW1980}, may differ from other authors notation by a constant factor.
Furthermore, $\omega$ is an element of $\mathbb{S}^2$ (viewed as a submanifold of $\mathbb{R}^3$), which can (with the exception of the north pole) be parameterized by the angles $(\vartheta, \varphi) \in (0,\pi] \times (0, 2 \pi],$ where $\vartheta$ is  from above, and $\varphi$ is an azimuthal angle. Relative to these coordinates, we have that $d \omega = \mbox{sin} \vartheta d \vartheta d \varphi.$

The function $\sigma$ depends on the chosen model of particle interaction. For the remainder of the article, we 
assume the following:  \\

\noindent {\bf Hypotheses on the collision kernel:}  
{\it We assume that there are constants $C_1$, $C_2$, $a$, and $b$ such that
the differential cross-section $\sigma$, which is listed above in \eqref{collisionCMintro} and below in \eqref{collisionCM}, 
satisfies the inequalities
$$
\sigma (\varrho,\vartheta) 
\le 
\left( C_1 ~ \varrho^{a}+C_2 ~ \varrho^{-b}\right) ~ \sigma_0(\vartheta),
$$
where
\begin{itemize}
	\item $C_1$ and $C_2$ are non-negative
	\item There exists a number $\gamma > -2$ such that $0\le \sigma_0(\vartheta) \le \sin^\gamma\vartheta$
	\item $0\le a < \min(2,2+\gamma),$ and $0 \le b <\min(4,4+\gamma).$
\end{itemize}
We also assume that there exist constants $C \geq 1$ and $\beta \in (-4,2)$ such that 
		\begin{equation}
		\frac{1}{C} (P^0)^{\beta/2}
		\le
		\nu(\barp) 
		\le C (P^0)^{\beta/2}. \label{hypNU}
		\end{equation}
Here $\nu(\barp),$ the collision frequency, is defined by
$$
\nu(\barp)
\eqdef
\int_{\mathbb{R}^3}  d\barq ~
\int_{\mathbb{S}^{2}} d\omega ~
~ v_{\o} ~
 \sigma(\varrho,\vartheta) ~
J(\barq),
$$ 
and $J(\barq)$ is the global relativistic Maxwellian defined by 
\begin{equation} \label{juttner}
J(\barq)
\eqdef 
 e^{-c Q^0/(k_B \Temp_M)},
\end{equation}
and $\Temp_M >0$ is a constant. The lower bound in \eqref{hypNU} is satisfied if there is a suitable lower bound for $\sigma.$  See \cite{strainSOFT,strainPHD,MR1379589,MR933458}, and we refer specifically to \cite{strainSOFT} for more details. 
}  \\

The upper bound in \eqref{hypNU} is sufficient to deduce some of the estimates that we use below, such as Lemma \ref{newNONlin}, Lemma \ref{boundK2}, Lemma \ref{boundKinfX}, and Lemma \ref{sSOFT}. Our hypothesis originates from the general physical assumption introduced in \cite{MR933458}; see also \cite{DEnotMSI} for further discussions. Standard references in relativistic Kinetic theory include \cite{MR1898707,sdGsvLwvW1980,MR1379589,MR0088362,nla.cat-vn2540748}.

\subsection{Maxwellians} \label{SS:Maxwellians}
We now introduce the Maxwellians, a special class of functions on $\mathfrak{M}$ that play a fundamental role in
connecting the relativistic Boltzmann equation to the relativistic Euler equations. Given any functions
$n = n(t,\barx)$, $\Temp = \Temp(t,\barx)$, $u^{\mu} = u^{\mu}(t,\barx)$ on $M$ such that 
$
n > 0, \Temp > 0, u^{0} > 0, u_{\kappa} u^{\kappa} = -c^2,
$
we define the 
corresponding \emph{Maxwellian} $\mathcal{M} = \mathcal{M}(n,\Temp,u;\barp)$ as follows:
\begin{align}
	\mathcal{M} = \mathcal{M}(n,\Temp,u;\barp) & \eqdef n \frac{z}{4 \pi m_0^3 c^3 K_2(z)} 
		\exp\Big(\frac{z u_{\kappa}P^{\kappa}}{m_0c^2} \Big), \label{E:Maxwelliandef} 
\end{align}
where the dimensionless variable $z$ is defined by
\begin{equation}
\label{zdef}
z \eqdef \frac{m_0c^2}{k_B \Temp},
\end{equation}
$k_B$ is again \emph{Boltzmann's constant}, and $K_2(z)$ is the Bessel function defined in \eqref{E:Besseldef}. It can be shown (see e.g. \cite[Chapter 2]{sdGsvLwvW1980}) that
\begin{align} \label{E:MaxwellianCondition}
\mathcal{C}(F,F)  \equiv 0 
\quad \iff 
\quad F \ \mbox{is a Maxwellian of the form} \ \mathcal{M}(n,\Temp,u;\barp).   
\end{align}
For this reason, a Maxwellian $\mathcal{M}$ is said to be in \emph{local equilibrium} with the collision process.
If $n(t,\barx), \Temp(t,\barx),$ and $u^{\mu}(t,\barx)$ are constant valued, then $\mathcal{M}$ is said to be in \emph{global equilibrium}. Note also that the function $J(\barp)$ appearing in \eqref{juttner} is a global Maxwellian. It will play a distinguished role in the analysis of Section \ref{SS:rBestimates}. In fact, we will assume that the  
local Maxwellian corresponding to the fluid solution is uniformly comparable to powers of $J(\barp);$ see \eqref{juttnerB}.

\begin{remark}
	Although $\mathcal{C}(\mathcal{M},\mathcal{M}) = 0,$ it is not true in general that $\mathcal{M}$ is a solution to the rB equation \eqref{E:rBintro}; i.e., in general, $P^{\kappa} \partial_{\kappa} \big(\mathcal{M}(n,\Temp,u;\barp) \big) \neq 0.$
Nevertheless, our main result (Theorem \ref{MainThm}) shows that under certain assumptions, including that $(n,\Temp,u)$
are solutions to the rE system, there is a solution of \eqref{E:rBintro} near $\mathcal{M}.$

\end{remark}

%\subsection[The energy-momentum tensor and the particle current]{The energy-momentum tensor and the particle current for the rB equation}
\subsection{The energy-momentum tensor and the particle current for rB}
\label{SS:conservationlawsrBe}

We now define $T_{Boltz}[F],$ which is the \emph{energy-momentum-stress-density tensor} (energy-momentum tensor for short) for the relativistic Boltzmann equation, and $I_{Boltz}[F],$ which is the \emph{particle current}. Given any function $F(\barp),$ these quantities are defined as follows:
\begin{equation} 
\label{E:TBoltzmanndef}
\begin{split} 
T_{Boltz}^{\mu \nu}[F] & \eqdef c \int_{\mathbb{R}^3_{\barp}} P^{\mu} P^{\nu} F(\barp) \frac{d \barp}{P^{0}},  \quad (0 \leq \mu,\nu \leq 3),
				\\
I_{Boltz}^{\mu}[F] & \eqdef c \int_{\mathbb{R}^3_{\barp}} P^{\mu} F(\barp) \frac{d \barp}{P^{0}},  \quad  (0 \leq \mu \leq 3). 
\end{split} 
\end{equation} 
Whenever there is no possibility of confusion, we abbreviate $T_{Boltz} = T_{Boltz}[F],$ and similarly for the additional quantities depending on $F$ that appear below. The conservation laws can be summarized as follows (see Lemma \ref{L:Boltzmannconservation}): whenever $F(t,\barx,\barp)$ is a  classical solution to the relativistic Boltzmann equation, the following conservation laws hold
\begin{equation} 
\label{E:particlecurrentconservationBoltzmann}
\begin{split} 
	\partial_{\kappa} (T_{Boltz}^{\mu \kappa}[F]) & =  0, \quad (0 \leq \mu \leq 3),
				\\
	\partial_{\kappa} (I_{Boltz}^{\kappa}[F]) & =  0. 
\end{split} 
\end{equation} 
It is explained in the next section that whenever $F = \mathcal{M}(n,\Temp,u;\barp),$ and $(n,\Temp,u)$ are a solution
to the rE system, then the above conservation laws for $F$ hold \emph{even though $F$ need not be a solution to the rB equation}.  This fact will play an important role during our discussion of the relationship of the rE system to the rB equation.

\subsection{The relativistic Euler equations and their relationship to the relativistic Boltzmann equation}
\label{SS:rEintro}

In this section, we recall some basic facts about the rE equations in Minkowski space. This is intended to serve as background for Theorem \ref{MainThmEuler}, which is stated in Section \ref{SS:Mainresults}, and proved in Section \ref{S:rE}. For a detailed discussion of the rE system, we refer the reader to Christodoulou's survey article \cite{dC2007b}; here we only provide a brief introduction. Our other goal in this section is to illustrate some of the formal correspondences between the rE system and the rB equation. These provide a heuristic basis for the expectation that the rE system should emerge from the rB equation in the hydrodynamic limit. 

The rE system models the evolution of a perfect fluid evolving in a Lorentzian spacetime. In Minkowski space, the spacetime of special relativity, they are 
\begin{equation}  
\label{E:nuconservationlawintro}
\begin{split} 
\partial_{\kappa} T_{fluid}^{\mu \kappa} & = 0, \quad (0 \leq \mu \leq 3), %\label{E:Tfluidconservationintro}
	\\
	\partial_{\kappa} I_{fluid}^{\kappa} & = 0. 
\end{split} 
\end{equation} 
Here the \emph{energy-momentum-stress-density tensor} (energy-momentum tensor for short) for a perfect fluid
has components
\begin{align}
 \label{E:Tfluiddef}
	T_{fluid}^{\mu \nu}  & \eqdef c^{-2}(\rho + \pressure)u^{\mu} u^{\nu} + \pressure (g^{-1})^{\mu \nu}, && (\mu, \nu = 0,1,2,3),
\end{align}
where $\rho \geq 0$ is the \emph{proper energy density}, $\pressure \geq 0$ is the \emph{pressure},
and $u$ is the \emph{four-velocity}.  The four-velocity is a future-directed 
(i.e., $u^0 > 0$ in our inertial coordinate system) vectorfield that satisfies the normalization condition
\begin{align}
\label{E:unormalized}  
	u_{\kappa} u^{\kappa} & = - c^2.
\end{align}
The vectorfield $I_{fluid}^{\mu}$ is the \emph{particle current}, it is proportional to the four-velocity:
\begin{align*} 
	I_{fluid}^{\mu} & \eqdef n u^{\mu}, && (\mu =0,1,2,3).
\end{align*}
The quantity $n \geq 0$ is the \emph{proper number density}. All of these
quantities are functions of $(t,\barx) \in \mathbb{R} \times \mathbb{R}^3.$ 

By projecting the first divergence in \eqref{E:nuconservationlawintro} in the direction parallel to $u$ and onto the $g-$orthogonal complement of
$u,$ we  can rewrite \eqref{E:nuconservationlawintro} (omitting some standard calculations) in the well-known form
\begin{subequations}
\begin{align}
	\partial_{\kappa}(n u^{\kappa}) & = 0, &&
	\label{E:nandulaw2} 
	\\
	u^{\kappa} \partial_{\kappa} \rho + (\rho + \pressure) \partial_{\kappa} u^{\kappa} & = 0, && \label{E:Euleru} 
	\\
	(\rho + \pressure)u^{\kappa}\partial_{\kappa} u^{\mu} + \Pi^{\mu \kappa} \partial_{\kappa} \pressure 
	& = 0, && (\mu = 0,1,2,3), \label{E:Euleruperp}
\end{align}
\end{subequations}
where $\Pi,$ the two-tensor that projects onto the $g-$orthogonal complement of $u$ (i.e. $\Pi^{\mu \kappa}u_{\kappa} = 0$)
has the components
\begin{align} \label{E:Pidef}
	\Pi^{\mu \nu} & \eqdef c^{-2}u^{\mu} u^{\nu} + (g^{-1})^{\mu \nu}, && (\mu, \nu = 0,1,2,3).
\end{align}
The above equations are redundant in the following sense: if \eqref{E:unormalized} and \eqref{E:Euleru} hold, and if \eqref{E:Euleruperp} holds for $\mu=1,2,3,$ then it follows that \eqref{E:Euleruperp} also holds when $\mu=0.$

The equations \eqref{E:nuconservationlawintro} are not closed because there are more unknowns than equations. In order to close the rE system in a manner compatible with the rB equation, we will make use of the additional fluid variable $\Ent,$ a non-negative quantity known as the \emph{entropy per particle}. We now make the following assumptions, which are of crucial importance: we assume that the fluid variables $n$, $\Temp$, $\Ent$, $\pressure$, $\rho$ are bound by the relations 
\begin{subequations}
\begin{align}
	\pressure & = k_B n \Temp = m_0 c^2 \frac{n}{z},  \label{E:PnTEulerrelationintro}
	\\
	\rho & = m_0 c^2 n \frac{K_1(z)}{K_2(z)} + 3 \pressure,
		\label{E:nrhoprelationintro} \\
	n & = 4\pi e^4 m_0^3 c^3 h^{-3} \exp \Big(\frac{-\Ent}{k_B} \Big) \frac{K_2(z)}{z}\exp\Big(z 
			\frac{K_1(z)}{K_2(z)} \Big), \label{E:nEntzrelationintro}
\end{align}
\end{subequations}
where $k_B > 0$ is \emph{Boltzmann's} constant, $h>0$ is Plank's constant, 
and $z$ is defined in \eqref{zdef}. Furthermore, the $K_j(\cdot)$ are modified second order Bessel functions, which are defined in Lemma \ref{L:Bessel functions}. The origin
of these relations, which are fundamentally connected to the properties of Maxwellians, is explained later in this section.

Using the above relations, we can deduce the \emph{local} solvability of any one of the variables $n$, $\Temp$, $\Ent$, $\pressure$, $\rho$ in terms of any two of the others \emph{whenever we know that the necessary partial derivatives are non-zero (knowing this allows us to apply the implicit function theorem)}. In particular, whenever $\left. \frac{\partial \rho}{\partial p} \right|_{\Ent} > 0,$ we can locally solve for $\pressure$ as a function $f_{kinetic}$ of $\Ent$ and $\rho:$ 
\begin{align} \label{E:EOS}
	\pressure = f_{kinetic}(\Ent,\rho).
\end{align}
We refer to $f_{kinetic}$ as the \emph{kinetic equation of state}. 
Note that this equation of state is discussed in Synge \cite{MR0088362}.
For future use, we denote by $\mathfrak{H}$ and $\mathfrak{P}$ respectively the smooth maps from $(n, z)$ to $\Ent, \pressure$ induced by the above relations:
\begin{subequations} 
\begin{align} 
	\Ent & = \mathfrak{H}(n,z) = k_B \ln \bigg\lbrace (4\pi e^4 m_0^3 c^3 h^{-3} n^{-1} 
		\frac{K_2(z)}{z}\exp\Big(z \frac{K_1(z)}{K_2(z)} \Big) \bigg\rbrace, \label{E:changeofstatespacevariablesH} \\
	\pressure & = \mathfrak{P}(n,z) = m_0 c^2 n z^{-1}. \label{E:changeofstatespacevariablesP}
\end{align}
\end{subequations}
In the next section, we sketch a standard proof, which is based on energy estimates, of Theorem \ref{MainThmEuler}, i.e., for local existence for the rE system under the relations \eqref{E:PnTEulerrelationintro} - \eqref{E:nEntzrelationintro}.
During the proof, we work with the unknowns $(\Ent,\pressure,u^{1},u^{2},u^{3}),$ the reason being that a framework for deriving energy estimates in these variables via the method of energy currents has been developed; this is explained in detail during the proof of the theorem. To derive the energy estimates, we of course need an equivalent (for $C^1$ solutions)
formulation the rE system \eqref{E:nuconservationlawintro} in terms of $(\Ent,\pressure,u^{1},u^{2},u^{3}).$ To prove the equivalence of the systems, one needs the following identity, which is shown to be a consequence of the relations \eqref{E:PnTEulerrelationintro} - \eqref{E:nEntzrelationintro} in Proposition \ref{L:Maxwellrelations} below: 
\begin{subequations}
\begin{align}
	\rho + \pressure & = n \left. \frac{\partial \rho}{\partial n} \right|_{\Ent}. \label{E:nrhoprelationship}
\end{align}
\end{subequations}
Using \eqref{E:nandulaw2}, \eqref{E:PnTEulerrelationintro} - \eqref{E:nEntzrelationintro}, and several applications of the chain rule, we deduce the following well-known version of the rE system:
\begin{subequations}
\begin{align}
	u^{\kappa} \partial_{\kappa} \Ent & = 0, && \label{E:rEentropy}
	\\
	u^{\kappa} \partial_{\kappa} \pressure + q \partial_{\kappa} u^{\kappa} & = 0, && \label{E:rEpressure}
	\\
	(\rho + \pressure)u^{\kappa}\partial_{\kappa} u^{\mu} + \Pi^{\mu \kappa} \partial_{\kappa} \pressure & = 0, 	
		&& (\mu = 0,1,2,3), \label{E:rEfourvelocity}
	\\
	\Pi^{\mu \nu} & \eqdef c^{-2}u^{\mu} u^{\nu} + (g^{-1})^{\mu \nu}, && (\mu, \nu = 0,1,2,3), \label{E:rEPi}
	\\
	q & \eqdef	c^2 (\rho + \pressure) \left. \frac{\partial \pressure}{\partial \rho} \right|_{\Ent}.  && \label{E:rEq}
\end{align}
\end{subequations}

Although our local existence theorem is proved using the variables $(\Ent,\pressure,u^1,u^2,u^3),$ in order to construct the relativistic Maxwellian \eqref{E:Maxwelliandef}, which plays a fundamental role in our analysis of the rB equation, we 
require the availability of the variables $(n, \Temp, u^1,u^2,u^3).$ Thus, we need to be able to solve for $(n,z)$ as a smooth function of $(\Ent, \pressure).$ Remarkably, we could not find such a result in the literature. 
Thus, in Lemma \ref{SS:SolveforTemp}, we rigorously show that we can locally solve for $(n,z)$ in terms of $(\Ent,\pressure)$ 
if $0 < z \leq 1/10$ or $z \geq 70.$ Furthermore, based on numerical observations in the region $1/10 < z < 70,$ we make the following conjecture:

\begin{conjecture} \label{C:Tempisgood}
	The map $(n,z) \rightarrow (\mathfrak{H}(n,z),\mathfrak{P}(n,z))$ is auto-diffeomorphism of the region $(0, \infty) \times 
	(0, \infty),$ where the maps $\mathfrak{H}$ and $\mathfrak{P}$ are defined in
	\eqref{E:changeofstatespacevariablesH} - \eqref{E:changeofstatespacevariablesP}.
\end{conjecture}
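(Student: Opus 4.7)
The plan is to reduce the two-dimensional invertibility claim to a one-dimensional monotonicity statement for a combination of Bessel functions. Since $\mathfrak{P}(n,z) = m_0 c^2 n/z$ is linear in $n$ with nonvanishing coefficient for $z > 0$, the preliminary change of variables $(n,z) \mapsto (\mathfrak{P},z)$ is itself a diffeomorphism of $(0,\infty)^2$ with explicit inverse $n = \mathfrak{P}z/(m_0c^2)$. Substituting this into \eqref{E:changeofstatespacevariablesH} and absorbing the additive constants yields the separable form
\begin{equation*}
\mathfrak{H}\big(\mathfrak{P}z/(m_0c^2),\,z\big) \;=\; k_B\big[C_0 - \ln \mathfrak{P} + H(z)\big], \qquad H(z) \eqdef \ln K_2(z) - 2\ln z + z\frac{K_1(z)}{K_2(z)},
\end{equation*}
for an explicit constant $C_0$. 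Conjecture \ref{C:Tempisgood} is therefore equivalent to showing that $H: (0,\infty) \to \mathbb{R}$ is a $C^\infty$ diffeomorphism onto $\mathbb{R}$: once $H$ is globally invertible with smooth inverse, one reads off $z$ from $H(z) = \Ent/k_B + \ln \pressure - C_0$ and then recovers $n = \pressure z/(m_0 c^2)$.

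The boundary behavior is immediate from the Bessel asymptotics of Lemma \ref{L:Bessel functions}: $K_2(z) \sim 2/z^2$ and $z K_1(z)/K_2(z) \to 0$ as $z \to 0^+$ give $H(z) \sim -4\ln z \to +\infty$, while $\ln K_2(z) \sim -z$ together with $z K_1(z)/K_2(z) \sim z - 3/2$ as $z \to \infty$ give $H(z) \sim -\tfrac{5}{2}\ln z \to -\infty$. Given strict monotonicity, these limits supply bijectivity onto $\mathbb{R}$, and the inverse function theorem upgrades the bijection to a $C^\infty$ diffeomorphism. For monotonicity itself, a direct calculation using $K_2'(z) = -K_1(z) - (2/z) K_2(z)$ together with the identity $R'(z) = R(z)^2 + 3R(z)/z - 1$ for $R \eqdef K_1/K_2$ (standard from the Bessel recurrences) yields
\begin{equation*}
H'(z) \;=\; z R'(z) - \frac{4}{z} \;=\; z R(z)^2 + 3R(z) - z - \frac{4}{z}.
\end{equation*}
Multiplying by $K_2(z)^2$ and using $K_3(z) = K_1(z) + 4K_2(z)/z$, the required negativity $H'(z) < 0$ becomes equivalent to the single-variable Bessel inequality $K_3(z)^2 - 5 K_1(z) K_3(z) + 4 K_2(z)^2 > 0$ for every $z > 0$.

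Proving this global Bessel inequality is the main obstacle, and I would attack it on three complementary fronts. First, the discriminant of the quadratic $X^2 - 5K_1 X + 4K_2^2$ in $X = K_3$ equals $25 K_1^2 - 16 K_2^2$, which is negative precisely when $R(z) < 4/5$; the (known) monotonicity of $R$ in $z$ shows that this condition covers all $z$ below some explicit threshold $z_* \approx 15/2$, and on that range the quadratic is automatically positive. Second, in the remaining moderate-to-large-$z$ regime, I would combine the classical Tur\'an inequality $K_1(z) K_3(z) > K_2(z)^2$ with the integral representation $K_\nu(z) = \int_0^\infty e^{-z \cosh t} \cosh(\nu t)\,dt$ to sharpen the bound on the ratio $K_3/K_1$; a scratch asymptotic expansion shows that $K_3/K_1$ lies below the smaller root of the quadratic by a quantity of order $1/z^2$, so a careful Cauchy--Schwarz estimate on the Chebyshev moments of the $K_\nu$'s should close the gap. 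As a fallback, the ranges $0 < z \le 1/10$ and $z \ge 70$ have already been handled analytically in Lemma \ref{L:invertiblemaps}, so a rigorous interval-arithmetic computation on the compact interval $[1/10, 70]$ would yield a computer-assisted proof. Finally, I observe that $H'(z) < 0$ is equivalent (up to a positive factor) to the positivity of the heat capacity at constant pressure of the relativistic J\"uttner gas; this thermodynamic reinterpretation strongly suggests that the cleanest proof will route through moment identities for the J\"uttner distribution rather than through raw Bessel manipulations.
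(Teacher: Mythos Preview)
This statement is a \emph{conjecture} in the paper, not a theorem: the authors reduce it (exactly as you do) to the single-variable inequality $3R(z) + zR(z)^2 - z - 4/z < 0$ for all $z>0$ with $R=K_1/K_2$, prove it analytically only for $z\le 1/10$ and $z\ge 70$ (Lemma~\ref{L:invertiblemaps}), and otherwise offer numerical evidence (Figure~\ref{fig:partialppartialz}). Your reduction to monotonicity of $H(z)$ is correct and your identity $H'(z)=zR^2+3R-z-4/z$ agrees with the paper's $\partial_z\pressure|_{\Ent}/\pressure$; the reformulation as $K_3^2-5K_1K_3+4K_2^2>0$ is also a valid equivalent.

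The genuine gap is that you have not actually proved the inequality on the crucial compact interval. Your discriminant argument is fine as far as it goes (it covers the range where $R(z)<4/5$), but for the remaining regime you only say that a Cauchy--Schwarz estimate on Chebyshev moments ``should close the gap,'' without carrying it out, and you offer interval arithmetic as a fallback without executing it. These are the hard parts: the Tur\'an inequality $K_1K_3>K_2^2$ alone is too weak (it only gives $K_3^2-5K_1K_3+4K_2^2>K_3^2-5K_1K_3+4K_1K_3=K_3(K_3-K_1)>0$ \emph{only if} $K_3>K_1$, which is trivially true but does not by itself control the sign since you need to beat the $-5K_1K_3$ term, not just $-4K_1K_3$). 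So your sketch is not yet a proof, and in this respect you are in the same position as the paper. The thermodynamic observation about heat capacity is a reasonable heuristic lead, but until one of these strategies is actually executed the statement remains open.
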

This conjecture is based on a numerical plot (see Figure \ref{fig:partialppartialz}), in which $\left. z^5 \frac{\partial \pressure}{\partial z} \right|_{\Ent}$ appears to be negative for all $z > 0.$ Examining the proof of Lemma  \ref{SS:SolveforTemp}, it is clear that the negativity of $\left. \frac{\partial \pressure}{\partial z} \right|_{\Ent}$
would imply that the conjecture is true. We remark that by \eqref{E:PderivativeoverPagain}, analytically verifying the negativity of this quantity is equivalent to demonstrating the following inequality for all $z > 0:$
\begin{align} \label{E:pdecreasesinzinequality}
	3 \frac{K_1(z)}{K_2(z)} + z  \Big( \frac{K_1(z)}{K_2(z)} \Big)^2 - z - \frac{4}{z} < 0.
\end{align}

Another fundamental quantity in the system \eqref{E:rEentropy} - \eqref{E:rEq} is the \emph{speed of sound}, the square of which is defined to be $c^2 \left. \frac{\partial \pressure}{\partial \rho} \right|_{\Ent}.$ It is a fundamental thermodynamic assumption that $0 < \left. \frac{\partial \pressure}{\partial \rho} \right|_{\Ent} < 1$ for physically relevant equations of state. As we alluded to in the introduction, the positivity of $\left. \frac{\partial \pressure}{\partial \rho} \right|_{\Ent}$ is required in order for the rE system to be hyperbolic, while the upper bound of $1$ implies that the speed of sound propagation is less than the speed of light. In Remark \ref{R:hyperbolicity}, we explain exactly how we use the positivity in our proof of local existence, while the analytic and geometric connections between the upper bound of $1$ and the speed of sound propagation being less than the speed of light is explained in e.g. \cite{jS2008a}. Remarkably, there seems to be no rigorous proof in the literature that these inequalities hold for the kinetic equation of state \eqref{E:EOS}
in every regime. Consequently, in Lemma \ref{L:speedofsound}, we analytically verify that the equation of state \eqref{E:EOS} exists and satisfies these inequalities in the same regime discussed above, namely for $0 < z \leq 1/10$ and $z \geq 70.$ Moreover, we conjecture that the following stronger statement is true.

\begin{conjecture} \label{C:Speedisreal}
Under the relations \eqref{E:PnTEulerrelationintro} - \eqref{E:nEntzrelationintro}, $\pressure$ can be written as a smooth, positive function of $\Ent,\rho$ on the domain $(0,\infty) \times (0,\infty)$,  i.e., the equation of state \eqref{E:EOS} is well-defined for all $(\Ent,\rho) \in (0,\infty) \times (0,\infty).$ Furthermore, on $(0,\infty) \times (0,\infty),$ we have that
\begin{align} \label{E:speedofsoundbound}
		0 < \left. \frac{\partial \pressure}{\partial \rho} \right|_{\Ent}(\Ent,\rho) \eqdef \left. \frac{\partial 
		f_{kinetic}(\Ent,\rho)}{\partial \rho} \right|_{\Ent} & < \frac{1}{3}. 
\end{align}
\end{conjecture}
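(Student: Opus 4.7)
The plan is to reduce the conjecture to a one-variable problem in $z \eqdef m_0 c^2/(k_B \Temp)$ and then to analyze the resulting expression using properties of modified Bessel function ratios. Setting $G(z) \eqdef K_1(z)/K_2(z)$, one reads from \eqref{E:PnTEulerrelationintro}--\eqref{E:nrhoprelationintro} that $\pressure/n = m_0 c^2/z$ and $\rho/n = m_0 c^2(G(z) + 3/z)$, i.e.\ both are functions of $z$ alone once $n$ is divided out. Differentiating \eqref{E:nEntzrelationintro} at fixed $\Ent$ and using the recurrence $K_2'(z) = -K_1(z) - (2/z) K_2(z)$ gives
\begin{equation*}
\left.\frac{d\ln n}{dz}\right|_{\Ent} = z G'(z) - \frac{3}{z},
\end{equation*}
after which an elementary computation assembles $\left.\partial \pressure/\partial \rho\right|_{\Ent}$ into an explicit rational function $\Psi(z)$ of $G(z)$ and $G'(z)$, matching the formula \eqref{E:inversespeedofsoundsquaredBessel} alluded to in the text. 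The first half of the conjecture---that $\pressure = f_{kinetic}(\Ent,\rho)$ extends smoothly to $(0,\infty)^2$---follows once the map $(n,z)\mapsto(\Ent,\rho)$ is a diffeomorphism; by the implicit function theorem this reduces to non-vanishing of its Jacobian, which after using the Bessel recurrences simplifies to a single-variable inequality in $z$ closely parallel to \eqref{E:pdecreasesinzinequality}.

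Next I would carry out the asymptotic analysis at both endpoints. The small-$z$ expansions $K_1(z)\sim 1/z$ and $K_2(z)\sim 2/z^2$, refined to a few orders, yield $\Psi(z)\to 1/3$ as $z\to 0^+$ (the ultra-relativistic/radiation limit), while the Hankel expansion $G(z) = 1 - \tfrac{3}{2z} + \tfrac{15}{8z^2} - \cdots$ yields $\Psi(z)\to 0$ as $z\to\infty$ (the non-relativistic limit). These are precisely the regimes handled rigorously in Lemma \ref{L:speedofsound}: once the remainders are quantified, both open inequalities $\Psi(z)>0$ and $\Psi(z)<1/3$ hold on $(0,1/10]\cup[70,\infty)$, and the real content of the conjecture lies in the complementary middle range.

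For that middle range my preferred strategy is to rewrite $\Psi(z)$ and $\tfrac{1}{3}-\Psi(z)$ in manifestly positive form, using the integral representation $K_\nu(z) = \int_0^\infty e^{-z\cosh t}\cosh(\nu t)\,dt$ in tandem with Tur\'an-type inequalities $K_\nu(z)^2 < K_{\nu-1}(z) K_{\nu+1}(z)$. After clearing denominators, both bounds should reduce to inequalities of the schematic shape
\begin{equation*}
a(z)\,K_1(z)^2 + b(z)\,K_1(z) K_2(z) + c(z)\,K_2(z)^2 \;>\; 0,
\end{equation*}
with explicit rational coefficients $a,b,c$ in $z$, and one attempts to certify positivity pointwise by a Cauchy--Schwarz argument applied to the integral representations. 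The positivity $\Psi(z)>0$ is essentially equivalent, after this rearrangement, to the inequality \eqref{E:pdecreasesinzinequality} conjectured to govern $\left.\partial\pressure/\partial z\right|_{\Ent}$, so both halves of \eqref{E:speedofsoundbound} should emerge from a common family of Bessel estimates.

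The hard part will be closing the intermediate regime analytically: the competing terms in $\tfrac{1}{3}-\Psi(z)$ cancel to leading order in both limits $z\to 0^+$ and $z\to\infty$, so any crude two-sided estimate of $G(z)$---for instance the known bounds $z/(z+2) < G(z) < z/\sqrt{(z+1)^2-1}$---loses too much precision. If no clean algebraic proof via Bessel identities materializes, a computer-assisted interval-arithmetic verification on a sufficiently fine partition of $[1/10,70]$, combined with an explicit upper bound on $|\Psi'(z)|$ derived from the Bessel recurrences, would suffice to close the argument rigorously.
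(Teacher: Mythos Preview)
The paper does not prove this statement: it is explicitly stated as a \emph{conjecture}, supported only by (i) the rigorous asymptotic analysis of Lemma~\ref{L:speedofsound}, which establishes the inequalities for $0 < z \leq 1/10$ and $z \geq 70$, and (ii) the numerical plot in Figure~\ref{fig:toucan} covering the intermediate range. There is no proof in the paper to compare your proposal against.

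Your reduction to a one-variable problem in $z$ and your asymptotic analysis at both endpoints coincide with what the paper actually carries out in Lemma~\ref{L:speedofsound}; your formula for $\Psi(z)$ matches \eqref{E:inversespeedofsoundsquaredBessel}. Where you go beyond the paper is in proposing concrete attacks on the intermediate range $[1/10,70]$: the Tur\'an-type and Cauchy--Schwarz ideas, and the interval-arithmetic fallback. The paper makes no attempt at either and simply leaves the middle range open.

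Your proposal is honest about its own incompleteness: you correctly identify that the leading-order cancellations in $\tfrac{1}{3} - \Psi(z)$ make naive bounds on $G(z)$ too lossy, and you do not claim the Tur\'an approach closes. The interval-arithmetic fallback is in principle a rigorous route, but as written it is a plan rather than a proof---you have not exhibited the bound on $|\Psi'(z)|$ or the partition that would certify positivity. So the proposal does not resolve the conjecture; it outlines a plausible program whose decisive step (the middle range) remains to be executed, exactly as in the paper.
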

Our conjecture is based upon a plot of $(\partial_z|_{\Ent} \rho)/(\partial_z|_{\Ent} \pressure)$ (which can be expressed in terms of a function of $z \eqdef \frac{m_0c^2}{k_B \Temp}$ alone)
that covers the region in question, i.e., the region $ 1/10 < z < 70.$
Our plot
is labeled as Figure \ref{fig:toucan} of Section \ref{SS:hyperbolicity}. Furthermore, we note that according to equation \eqref{E:inversespeedofsoundsquaredBessel}, proving the inequality \eqref{E:speedofsoundbound} is equivalent to proving that the following inequality holds for $z >0:$
\begin{equation} \label{E:weakspeedinequality}
3 <
3 + z \frac{K_1(z)}{K_2(z)}
+  
\frac{
 4 \frac{K_1(z)}{K_2(z)} + z\Big( \frac{K_1(z)}{K_2(z)} \Big)^2 -z
}{ 3 \frac{K_1(z)}{K_2(z)} + z \Big( \frac{K_1(z)}{K_2(z)} \Big)^2 - z - \frac{4}{z} }	
< \infty.
\end{equation}
In this inequality 
$(\partial_z|_{\Ent} \rho)/(\partial_z|_{\Ent} \pressure)
= \Big( \left. \frac{\partial f_{kinetic}}{\partial \rho} \right|_{\Ent} \Big)^{-1}$ is the middle term above expressed as a function of $z$.

Now that we have provided a thorough discussion of the subtleties, we state our:

\begin{quotation}
\textbf{Running Assumption:} 
For the remainder of the article, we restrict our attention to regimes in which any three of the five macroscopic variables $n$, $\Temp$, $\Ent$, $\pressure$, $\rho$ can be written as smooth functions of the remaining two, and in which 
$0 < \left. \frac{\partial \pressure}{\partial \rho} \right|_{\Ent}(\Ent,\rho).$ \textbf{To avoid burdening the paper, we do not point out this assumption every time it is made.} Thus, under these running assumptions, any two of the five variables, together with the spatial components $u^1,u^2,u^3$ of the four-velocity, may be considered to be state-space variables (i.e., they completely determine state of the system), and the phrase ``a solution to the rE system" is a well-defined concept, independent of the choice of state-space variables. The role of Lemmas \ref{L:invertiblemaps} and \ref{L:speedofsound}, then, is to guarantee that there is a large regime in which these conditions hold. Furthermore if Conjectures \ref{C:Tempisgood} and \ref{C:Speedisreal} are correct, then due to the relations \eqref{E:PnTEulerrelationintro} - \eqref{E:nEntzrelationintro}, these conditions are automatically verified.
\end{quotation}

We now explain the connection between solutions to the rE system and solutions to the rB equation. First, given 
any relativistic Maxwellian $\mathcal{M}(n,\Temp,u;\barp)$ as defined in 
\eqref{E:Maxwelliandef}, 
%$\mathcal{M} = \mathcal{M}(n(t,\barx),\Temp(t,\barx),u(t,\barx);\barp)$ 
%as described in Section \ref{SS:Maxwellians}, 
we can define an associated proper number density
$n[\mathcal{M}],$ temperature $\Temp[\mathcal{M}],$
entropy per particle $\Ent[\mathcal{M}],$ pressure $\pressure[\mathcal{M}],$
and proper energy density $\rho[\mathcal{M}].$ The first two quantities are defined to be the first two arguments 
in $\mathcal{M}(n,\Temp,u;\barp),$ while 
precise definitions of the remaining three (as well as equivalent definitions of the first two) are given in Section \ref{SS:Macroscopicquantities}. Furthermore, it can be shown that
\begin{align} \label{E:fluidBoltzmannagreement}
T_{Boltz}^{\mu \nu}[\mathcal{M}]
	& = T_{fluid}^{\mu \nu}, &&
	I_{Boltz}^{\mu }[\mathcal{M}]
	= I_{fluid}^{\mu}.
\end{align}
Equation \eqref{E:fluidBoltzmannagreement} should be interpreted as follows: the tensor obtained by substituting $p = p[\mathcal{M}]$, $\rho = \rho[\mathcal{M}]$, and $u^{\mu} = I^{\mu}[\mathcal{M}]/n[\mathcal{M}]$ into the definition \eqref{E:Tfluiddef} agrees with the tensor calculated using \eqref{E:TBoltzmanndef}. 
The key point is the following: as shown in Proposition \ref{P:Maxwellianidentities}, the macroscopic quantities $n[\mathcal{M}]$, 
$\Temp[\mathcal{M}]$, $\Ent[\mathcal{M}]$, $\pressure[\mathcal{M}]$, $\rho[\mathcal{M}]$ satisfy the relations \eqref{E:PnTEulerrelationintro} - \eqref{E:nEntzrelationintro}. Therefore we must use the same relations to close the rE equations.

Alternatively, if we are given smooth functions $(\Ent,\pressure,u^1,u^2,u^3)$ of $(t,\barx)$ that verify the rE equations \eqref{E:rEentropy} - \eqref{E:rEq} on $[0,T] \times \mathbb{R}^3,$ we can construct a local relativistic Maxwellian $\mathcal{M}(n,\Temp,u;\barp)$ (as in \eqref{E:Maxwelliandef}) \emph{as long as the maps \eqref{E:changeofstatespacevariablesH} - \eqref{E:changeofstatespacevariablesP} are invertible in a neighborhood of $(\Ent,\pressure)([0,T] \times \mathbb{R}^3)$}. Now for any solution of rE system, the identity
%$\partial_{\kappa} T_{fluid}^{\mu \kappa} = 0, \partial_{\kappa}  I_{fluid}^{\kappa} = 0,$ 
\eqref{E:fluidBoltzmannagreement} implies that \eqref{E:particlecurrentconservationBoltzmann} holds for the 
Boltzmann energy-momentum tensor constructed out of $\mathcal{M}.$
%$\partial_{\kappa}\big( T_{Boltz}^{\mu \kappa}[\mathcal{M}] \big) = 0, \partial_{\kappa} \big(I_{Boltz}^{\kappa}[\mathcal{M}] \big) = 0.$ 
Furthermore, as noted in Section \ref{SS:Maxwellians}, the right-hand side of the rB equation vanishes for $\mathcal{M}: \mathcal{C}(\mathcal{M},\mathcal{M}) = 0.$  At this point one may be tempted to conclude that $\mathcal{M}$ itself is a solution to the rB equation. In fact, this is not the case in general; this is exactly the difficulty that Theorem \ref{MainThm} addresses. Its content can roughly be expressed as follows: if $\varepsilon$ is
small enough and positive, then $\mathcal{M}$ is ``close to" a solution $F^\varepsilon$ of the rB equation. We remark that even though $\mathcal{M}$ itself is not in general a solution of \eqref{E:rBintro}, it nevertheless plays a prominent role in our construction of a local solution of \eqref{E:rBintro}. In particular, the validity of the Hilbert expansion of Section \ref{SS:HilbertExpansion} relies heavily on the fact that $\mathcal{M}(n(t,\barx),\Temp(t,\barx),u(t,\barx);\barp)$ is a Maxwellian and that $\big(n(t,\barx), \Temp(t,\barx), u(t,\barx) \big)$ is a solution to the rE system.

\subsection{Statement of main results} \label{SS:Mainresults}

In this section, we state our two independent theorems. The first concerns the existence of local solutions to the rE system 
satisfying the technical assumptions that are used in the proof of our main result. The second is our main result, which provides criteria that ensure that solutions to the rE system are the hydrodynamic limit of solutions to the rB equation. For clarity, we first state our assumptions on the initial data in a separate paragraph.

\begin{EulerInit}
	Let $\mathring{\mathbf{V}} \eqdef (\mathring{n}, \mathring{\Temp}, \mathring{u}^1, \mathring{u}^2, \mathring{u}^3)$ be
	initial data for the relativistic Euler equations under the relations \eqref{E:PnTEulerrelationintro} - 
	\eqref{E:nEntzrelationintro}, and define $\mathring{z} \eqdef \frac{m_0 c^2}{k_B \mathring{\Temp}}.$ 
	Let $\Omega_{n,z} \eqdef (\mathring{n}, \mathring{z})(\mathbb{R}^3) \subset
	(0,\infty) \times (0,\infty)$ be the image of the initial data $(\mathring{n}, \mathring{z}),$ and assume that there exists 
	a compact \textbf{convex}
	set $\Omega'_{n,z} \subset$ $(0,\infty) \times (0,\infty)$ containing
	$\Omega_{n,z}$ in its interior. Let $\Omega'_{\Ent,\pressure} \eqdef (\mathfrak{H},\mathfrak{P})(\Omega'_{n,z}) 
	\subset (0,\infty) \times (0,\infty),$ where the functions $\mathfrak{H}(n,z)$ and $\mathfrak{P}(n,z)$ are defined in 
	\eqref{E:changeofstatespacevariablesH} and \eqref{E:changeofstatespacevariablesP}. Let 
	$\Omega''_{\Ent,\pressure}$ be a compact \textbf{convex} subset of $(0,\infty) \times (0,\infty)$ containing
	$\Omega'_{\Ent, \pressure}.$ Note in particular that our assumptions imply that there exist constants 
	$n_1 > 0$ and $\Temp_1 > 0$ such that
	
	\begin{align*}
		n_1 \leq \inf_{\barx \in \mathbb{R}^3} \mathring{n}(\barx), \\
		\Temp_1 \leq \inf_{\barx \in \mathbb{R}^3} \mathring{\Temp}(\barx).
	\end{align*}
	Assume that the map $(n,z) \rightarrow \big(\mathfrak{H}(n,z), 
	\mathfrak{P}(n,z) \big) \eqdef (\Ent, \pressure)$ has a well-defined, smooth inverse 
	defined on $\Omega''_{\Ent,\pressure}$ that maps 
	$\Omega''_{\Ent,\pressure}$ into a compact subset of $(0,\infty) \times (0,\infty)$ containing
	$\Omega'_{n,z}.$ Let $\rho = m_0 c^2 n 
	\frac{K_1(z)}{K_2(z)} + 3 \pressure$ be as defined in \eqref{E:nrhoprelationintro}, and view $\rho, \Ent,$ and $\pressure$ as 
	functions of $(n,z)$ on $\Omega'_{n,z}.$ Assume that on the set $\big\lbrace \big(\Ent(n,z), \rho(n,z) 
	\big) | \ (n,z) \in \Omega'_{n,z} \big\rbrace,$ $\pressure$ can be written as a smooth function 
	$\pressure = f_{kinetic}(\Ent,\rho)$ and
	that $0 < \frac{\partial f_{kinetic}(\Ent,\rho)}{\partial \rho}.$
\end{EulerInit}

\begin{remark}
As discussed at the end of the introduction, the careful assumptions on the fluid initial data are designed to ensure that they fall within the regime of hyperbolicity, and to ensure the invertibility of the maps between the solution variables $(n, \Temp)$ and $(\Ent, \pressure).$ However, if our two conjectures are correct, then both of these conditions are automatically verified whenever the fluid variables are positive. In any case, Lemmas \ref{L:invertiblemaps} and \ref{L:speedofsound} together show that whenever $\mathring{\Temp}$ is uniformly small and positive, or in whenever $\mathring{\Temp}$ is uniformly large, these conditions hold.
\end{remark}

\begin{remark} \label{R:Convexity}
	The additional convexity assumptions on $\Omega'_{n,z}$ and $\Omega''_{\Ent,\pressure}$ are
	technical conditions that are used in our proof of Theorem \ref{MainThmEuler}, e.g. to conclude 
	\eqref{E:WinitialdatafiniteHNnorm}; see \cite[Proposition B.0.4]{jS2008c} for a discussion on the role of convexity in this 
	context (roughly speaking, convexity is needed so that one can apply the mean value theorem).
\end{remark}

\begin{theorem} \label{MainThmEuler} \textbf{(Local Existence for the rE System)}
Consider initial data that are subject to the restrictions described above.  
	Assume that $N \geq 3$ and that there exist constants $\underline{n} > 0, \ \underline{\Temp} > 0$ such that $(\underline{n}, 
	\frac{m_0 c^2}{k_B \underline{\Temp}}) \in \Omega'_{n,z}$ (defined above), and such that
	\begin{align} \label{E:VdatainHN}
		\| \mathring{\mathbf{V}} - \underline{\mathbf{V}} \|_{H_{\barx}^N} < \infty,
	\end{align}
	where $\underline{\mathbf{V}} \eqdef (\underline{n}, \underline{\Temp}, 0, 0, 0).$
 	Then these data launch a unique classical solution $\mathbf{V} = (n, \Temp, u^1, u^2, u^3)$ 
	to the rE system existing on a nontrivial slab $[0,T] \times \mathbb{R}^3$ upon which
	\begin{align*}
		0 < \inf_{(t,\barx) \in [0,T]\times \mathbb{R}^3} n(t,\barx), \\
		0 < \inf_{(t,\barx) \in [0,T]\times \mathbb{R}^3} \Temp(t,\barx).
	\end{align*}
	$\mathbf{V}$ has the following regularity: $\mathbf{V} - 
	\underline{\mathbf{V}} \in C^{N-2}([0,T] \times \mathbb{R}^3) \bigcap_{k=0}^{N-2} C^k([0,T], H^{N-k}).$ 
	
	If in addition the initial data are such that the inequalities \eqref{juttnerB} below are 
	\emph{strictly} verified at $t=0,$ then there exists a spacetime slab $[0,T'] \times \mathbb{R}^3$ of existence,
	with $0 < T' \leq T,$ upon which the condition \eqref{juttnerB} remains verified.
	
	Finally, if $\| \mathring{\mathbf{V}} - \underline{\mathbf{V}} \|_{H_{\barx}^N} < \delta,$ and
	$\delta$ is sufficiently small, then there exist constants $\Temp_* > 0$ and $C'>0,$
	and a slab of existence $[0,T''] \times \mathbb{R}^3,$
	with $T'' \geq C'/ \delta,$ upon which the following bounds are satisfied:
  \begin{align}		
		\Temp_* < \Temp(t,\barx) & < 2 \Temp_*,  \label{assumption1-T} \\
		|c^{-1} u^j(t,\barx)| & \leq C \delta, \quad (j=1,2,3).  \label{assumption2-T} 
	\end{align}
	As is shown below in Lemma \ref{L:nearconstantstates}, if $\delta$ is sufficiently small, then the bounds 
	\eqref{assumption1-T} - \eqref{assumption2-T} will also imply that the technical conditions 
	\eqref{juttnerB} are verified on $[0,T''] \times \mathbb{R}^3.$
\end{theorem}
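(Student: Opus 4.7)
The plan is to prove local existence by working in the variables $\mathbf{W} \eqdef (\Ent, \pressure, u^1, u^2, u^3)$ rather than $\mathbf{V} = (n,\Temp,u^1,u^2,u^3)$, since the system \eqref{E:rEentropy}-\eqref{E:rEq} admits a clean formulation in terms of $\mathbf{W}$ and, more importantly, an energy-current framework of Christodoulou/Speck that avoids having to exhibit a symmetrizer. First I would use the running assumption together with the invertibility of $(\mathfrak{H},\mathfrak{P})$ on a neighborhood of $\Omega'_{n,z}$ to translate the initial data $\mathring{\mathbf{V}}$ into initial data $\mathring{\mathbf{W}}$ taking values in the compact convex set $\Omega'_{\Ent,\pressure}$; convexity of this set (see Remark \ref{R:Convexity}) makes the mean-value-theorem estimate $\|\mathring{\mathbf{W}} - \underline{\mathbf{W}}\|_{H^N_{\barx}} \lesssim \|\mathring{\mathbf{V}} - \underline{\mathbf{V}}\|_{H^N_{\barx}}$ a routine consequence, yielding finite $H^N$ norms for the shifted data.

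Next I would set up an iteration $\mathbf{W}_{(k+1)}$ solving the linearization of \eqref{E:rEentropy}-\eqref{E:rEfourvelocity} around $\mathbf{W}_{(k)}$, with $\mathbf{W}_{(0)} \equiv \underline{\mathbf{W}}$, and derive uniform $H^N$ bounds via the method of energy currents. For each linearized equation I would construct an energy current $\dot{J}^\mu(\partial_{\vec\alpha}\mathbf{W}_{(k+1)})$ whose divergence equals a quadratic form in $\partial_{\vec\alpha}\mathbf{W}_{(k+1)}$ multiplied by coefficients depending smoothly on $\mathbf{W}_{(k)}$, and whose contraction with the future-directed normal $\partial_t$ dominates $|\partial_{\vec\alpha}\mathbf{W}_{(k+1)}|^2$. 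This positivity property is exactly where the hypothesis $0 < \partial_\rho p|_{\Ent}$ enters (see Remark \ref{R:hyperbolicity}), making $\dot{J}^\mu$ timelike; without it the quadratic form controlling the pressure and four-velocity components would lose coercivity. Commuting $\partial_{\vec\alpha}$ with the equations produces commutator terms that, after Moser/Sobolev inequalities (valid since $N \geq 3 > 3/2 + 1$), are bounded by $P(\|\mathbf{W}_{(k)} - \underline{\mathbf{W}}\|_{H^N}) \|\mathbf{W}_{(k+1)} - \underline{\mathbf{W}}\|_{H^N}$; applying the divergence theorem on $[0,t]\times\mathbb{R}^3$ and Gronwall's inequality yields a uniform lifespan $T > 0$ on which all $\mathbf{W}_{(k)} - \underline{\mathbf{W}}$ stay in a fixed $H^N$-ball.

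Convergence would then follow by a standard contraction argument at the level of differences $\mathbf{W}_{(k+1)} - \mathbf{W}_{(k)}$ in the lower-regularity norm $H^{N-1}$ (in which the quasilinear nonlinearities act as bounded multipliers), upgraded to $C^0_t H^N_{\barx}$ via weak-$*$ compactness and to $\bigcap_{k=0}^{N-2} C^k_t H^{N-k}_{\barx}$ by repeatedly using the equations to trade time derivatives for spatial ones, with the Sobolev embedding $H^{N-2} \hookrightarrow C^0$ giving $C^{N-2}$ regularity in spacetime. Uniqueness follows from a parallel energy-current estimate applied to a difference of two solutions. To then recover the conclusions for $\mathbf{V}$, I would use the smooth inverse of $(\mathfrak{H},\mathfrak{P})$ on $\Omega''_{\Ent,\pressure}$, noting that by continuity and shrinking $T$ if necessary $\mathbf{W}([0,T]\times\mathbb{R}^3)$ remains inside $\Omega''_{\Ent,\pressure}$, so that $(n,\Temp)$ stay inside a compact subset of $(0,\infty) \times (0,\infty)$, giving the uniform positivity of $n$ and $\Temp$. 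Persistence of the strict inequalities \eqref{juttnerB} on some possibly smaller $[0,T'] \times \mathbb{R}^3$ is then a continuity statement.

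The small-data claim would be proved by rerunning the energy estimate with initial size $\delta$; schematically, if $E_N(t) \eqdef \|\mathbf{W}(t) - \underline{\mathbf{W}}\|_{H^N}$ satisfies $\tfrac{d}{dt} E_N \leq C E_N^2 + C E_N$ with $E_N(0) \leq C \delta$, then a continuity argument keeps $E_N(t) \leq 2C\delta$ for times $t \leq C'/\delta$, and the Sobolev embedding then yields the pointwise bounds \eqref{assumption1-T}-\eqref{assumption2-T}, which by Lemma \ref{L:nearconstantstates} imply \eqref{juttnerB}. The main obstacle is the first step: designing an energy current on the five-component system $(\Ent,\pressure,u^1,u^2,u^3)$ whose positivity relies precisely on $0 < \partial_\rho p|_\Ent$ and whose divergence contains no top-order derivatives of $\mathbf{W}$ that cannot be absorbed by the energy itself; once this current is in hand the remaining analysis is essentially standard for symmetric hyperbolic systems.
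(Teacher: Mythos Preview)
Your approach is essentially the paper's: work in $\mathbf{W}=(\Ent,\pressure,u^1,u^2,u^3)$, use Christodoulou's energy currents (whose coercivity hinges on $\partial_\rho \pressure|_{\Ent}>0$), run a linearization/iteration closed by Gronwall, and transfer back to $\mathbf{V}$ via the inverse of $(\mathfrak{H},\mathfrak{P})$ on the compact convex sets. The paper even writes the current $\dot{\mathscr{J}}^\mu$ explicitly, but your description of its two key properties (positive-definite $\dot{\mathscr{J}}^0$, divergence free of top-order derivatives of $\dot{\mathbf{W}}$) is what is actually used.

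There is one slip in the small-data paragraph. Your schematic inequality $\tfrac{d}{dt}E_N \le C E_N^2 + C E_N$ does \emph{not} give a lifespan of order $1/\delta$: the linear term forces $E_N(t)\lesssim \delta e^{Ct}$, which stays $\le 2C\delta$ only for $t$ of order one. What the paper (and the standard argument) actually obtains is the purely quadratic inequality $\tfrac{d}{dt}\mathscr{E}_N \le C\mathscr{E}_N^2$; this holds because $\underline{\mathbf{W}}$ is an exact constant solution, so every inhomogeneity produced by commuting $\partial_{\vec\alpha}$ through \eqref{E:rEentropy}--\eqref{E:rEfourvelocity} is at least quadratic in $\mathbf{W}-\underline{\mathbf{W}}$, and likewise the background-derivative coefficients in the divergence formula \eqref{E:EnergyCurrentDivergence} vanish at $\underline{\mathbf{W}}$. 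From $\tfrac{d}{dt}\mathscr{E}_N \le C\mathscr{E}_N^2$ one integrates to $\mathscr{E}_N(t)\le \mathscr{E}_N(0)/(1-Ct\,\mathscr{E}_N(0))$, which indeed stays $\lesssim \delta$ for $t\le C'/\delta$. Drop the spurious linear term and your argument goes through.
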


\begin{remark}
The conclusions of this theorem regarding the slab $[0,T'] \times \mathbb{R}^3$
follow easily from  the regularity properties of the solution.
\end{remark}

\begin{remark}
There are two main obstacles to extending our principal result, which is Theorem \ref{MainThm} below, to a global-in-time existence result for the rB solution. The first is that local solutions to the rE system tend to form shocks in finite time. In fact, in \cite{dC2007}, Christodoulou showed that there are data arbitrarily close to that of a uniform, quiet fluid state (i.e., $\mathbf{V} \equiv \underline{\mathbf{V}}$) that launch solutions which form shocks in finite time. Since our construction of a local solution to the rB equation relies on the availability of the solution to the rE system, the breakdown of the fluid solution could in principle allow for a breakdown in the Boltzmann solution.

The second obstacle is the possible breakdown of the technical condition \eqref{juttnerB}
satisfied by the fluid solution; this breakdown is sometimes avoidable. More specifically, the condition \eqref{juttnerB}, which plays a key role in the analysis of Section \ref{SS:rBestimates}, 
may break down in finite time even before the shock happens. However, we are aware of a class of data that launch solutions for which \eqref{juttnerB} holds until the time of shock formation. The details are contained in \cite{dC2007}; we offer a quick summary. One considers initial data for the rE system that satisfy the assumptions of Theorem \ref{MainThmEuler}, 
and the following additional assumptions: the data are \emph{irrotational} and \emph{isentropic} (i.e, $\Ent \equiv const$), and they coincide with the constant state $\underline{\mathbf{V}} = (\underline{n},\underline{\Temp},0,0,0)$ outside of the unit sphere centered at the origin of the Cauchy hypersurface $\lbrace t = 0 \rbrace.$ If the departure\footnote{The notion of smallness is measured by a Sobolev norm of suitably high order.}, from constant state is $\leq \delta,$ where $\delta$ is  sufficiently small, then the estimates of \cite[Theorem 13.1]{dC2007} imply the following fact: on the exterior of an outgoing sound cone $\mathcal{C}$ emanating from a sphere of radius $1 - \epsilon$ contained in $\lbrace t = 0 \rbrace,$ where 
$\epsilon = \epsilon(\delta)$ is a sufficiently small positive number satisfying $0< \epsilon \leq 1/2,$ $n$ and $\Temp$ can be continuously extended\footnote{Even though the $L^{\infty}$ norm of the solution remains bounded, the Sobolev norm of the solution blows up as $t \uparrow T_{max}.$} to $[0,T_{\max}] \times \mathbb{R}^3 / \mathcal{C}_{int},$ where $\mathcal{C}_{int}$ denotes the interior of $\mathcal{C},$ and $[0,T_{max})$ is the maximal time interval of classical existence, i.e., the time of first shock formation. Furthermore, the estimates $|n(t,x) - \underline{n}| \leq C \delta,$ 
$|\Temp(t,\barx) - \underline{\Temp}| \leq C \delta,$ and $|c^{-1}u^j(t,\barx)| \leq C \delta,$ $(j=1,2,3),$ hold on the region on $[0,T_{\max}] \times \mathbb{R}^3 / \mathcal{C}_{int}.$  Christodoulou's theorem does not prove that the same estimates hold in $\mathcal{C}_{int},$ but on pg. 6, he remarks that the estimates \emph{do} hold on the interior, and are in fact easier to prove than the exterior estimates. Under these conditions, we may piece together the conclusions from the two regions
$[0,T_{\max}] \times \mathbb{R}^3 / \mathcal{C}_{int}$ and $\mathcal{C}_{int}$ to conclude the following:
for sufficiently small $\delta,$ an inequality of the form \eqref{juttnerB} is satisfied on any slab $[0,T] \times \mathbb{R}^3,$ with $T < T_{max}.$ Consequently, the conclusions of Theorem \ref{MainThm} hold on such a slab.
\end{remark}

Now that we have a large class of suitable solutions to the rE system available, we are ready to state our main theorem. Note that our main theorem is independent of Theorem \ref{MainThmEuler}; the role of Theorem \ref{MainThmEuler} is to ensure that there are fluid solutions that can be used in the hypotheses of Theorem \ref{MainThm}.

\begin{theorem} \label{MainThm}
Let 
$\big(n(t,\barx), \theta(t,\barx), u(t,\barx)\big)$ be a sufficiently regular (see Remark \ref{R:Regularity} below) solution to the relativistic Euler equations \eqref{E:nuconservationlawintro} for $(t,\barx) \in [0,T] \times \mathbb{R}^3_{\barx}.$
Construct the local Maxwellian $\mathcal{M}(n(t,\barx),\Temp(t,\barx),u(t,\barx);\barp)$ as in \eqref{E:Maxwelliandef}. Assume that there exist constants $C>0,$ $\Temp_M> 0,$ and $\alpha \in (1/2,1)$ such that for every 
$
(t,\barx,\barp)
\in 
[0,T] \times \mathbb{R}^3_{\barx} \times \mathbb{R}^3_{\barp},
$
the global Maxwellian $J(\barp) \eqdef e^{-cP^0/(k_B\Temp_M)}$ from \eqref{juttner} verifies the inequalities
\begin{equation}
\frac{J(\barp)}{C}
\le
\mathcal{M}(n(t,\barx),\Temp(t,\barx),u(t,\barx);\barp)
\le
C J^{\alpha}(\barp).
\label{juttnerB}
\end{equation}
Define initially
$$
F^\varepsilon(0,\barx,\barp)
=
\mathcal{M}(0,x,\barp)
+
\sum_{n=1}^6 \varepsilon^n F_n(0,\barx,\barp)
+
\varepsilon^3 \effRE(0,\barx,\barp) 
\ge 0.
$$
Then 
$\exists \varepsilon_0 >0$ 
such that for each 
$ 0< \varepsilon \le \varepsilon_0$ there exists a unique classical solution $F^{\varepsilon}$ of the relativistic Boltzmann equation \eqref{E:rBintro} 
of the form \eqref{hilbertE}
for all $(t, \barx, \barp) \in [0, T ] \times \mathbb{R}^3_{\barx} \times \mathbb{R}^3_{\barp}.$ Furthermore, there exists a constant
$
C_T = C_T (\mathcal{M}, F_1,\ldots, F_6) >0
$
such that for all 
$
\varepsilon \in (0, \varepsilon_0)
$
and for any 
$
\ell \ge 9,
$
the following estimates hold:
\begin{multline*}
\varepsilon^{3/2} 
\sup_{0\le t \le T}
  \left\|    \effRE(t)/  \sqrt{\mathcal{M}(t)}  \right\|_{\infty,\ell}
+
\sup_{0\le t \le T} \left\|   \effRE(t)/  \sqrt{\mathcal{M}(t)}  \right\|_2
\\
\le
C_T
\left\{
\varepsilon^{3/2}  \left\| \effRE(0)/  \sqrt{\mathcal{M}(0)}  \right\|_{\infty,\ell}
+
\left\| \effRE(0)/  \sqrt{\mathcal{M}(0)}  \right\|_2
+
1
\right\}.
\end{multline*}
Recall that $\effRE$ is the remainder from \eqref{hilbertE}.
Moreover, we have that
$$
\sup_{0\le t\le T}\| F^\varepsilon(t) - \mathcal{M}(t) \|_2
+
\sup_{0\le t\le T}\| F^\varepsilon(t) - \mathcal{M}(t) \|_\infty \le C_T \varepsilon,
$$
where the constants $C_T>0$ are independent of $\varepsilon$. 
\end{theorem}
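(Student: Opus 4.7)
\medskip

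\noindent\textbf{Proof proposal for Theorem \ref{MainThm}.}

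The plan is to implement the Hilbert expansion \eqref{hilbertE} in the standard way, following the scheme of Caflisch \cite{MR586416} and its refinement by Guo--Jang--Jiang \cite{MR2472156}, adapted to the relativistic setting. First I would substitute \eqref{hilbertE} into \eqref{E:rBintro} and match like powers of $\varepsilon$. At order $\varepsilon^{-1}$ one obtains $\mathcal{C}(F_0,F_0)=0$, which by \eqref{E:MaxwellianCondition} forces $F_0=\mathcal{M}$, and we choose $F_0=\mathcal{M}(n,\Temp,u;\barp)$ built from the given rE solution. At order $\varepsilon^{k-1}$ for $1\le k\le 6$, one obtains a linear equation of the shape
\begin{equation*}
2\mathcal{C}(\mathcal{M},F_k)=P^\kappa\partial_\kappa F_{k-1}-\sum_{\substack{i+j=k\\ i,j\ge 1}}\mathcal{C}(F_i,F_j).
\end{equation*}
Writing $F_k=\sqrt{\mathcal{M}}\,f_k$ and invoking the Fredholm theory for the linearized collision operator $L\eqdef-2\mathcal{M}^{-1/2}\mathcal{C}(\mathcal{M},\sqrt{\mathcal{M}}\,\cdot)$, the solvability of each such equation reduces, via the macroscopic conservation laws obtained by projecting onto $\ker L$, to exactly the relativistic Euler system \eqref{E:rEentropy}--\eqref{E:rEq} for $F_0$, and to linear transport-type systems for the higher corrections which inherit regularity from the rE solution by Theorem \ref{MainThmEuler}. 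Thus $F_1,\ldots,F_6$ exist globally on $[0,T]\times\mathbb{R}^3_{\barx}\times\mathbb{R}^3_{\barp}$ with appropriate decay in $\barp$ inherited from the Gaussian weight $\sqrt{\mathcal{M}}$, uniformly controlled by constants depending on the rE solution.

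Next I would derive the equation for the remainder $\effRE$. Setting $f^\varepsilon\eqdef\effRE/\sqrt{\mathcal{M}}$ and plugging \eqref{hilbertE} into \eqref{E:rBintro}, the cancellations from the hierarchy leave an equation schematically of the form
\begin{equation*}
\partial_t f^\varepsilon+\hat P\!\cdot\!\nabla_{\barx}f^\varepsilon+\frac{1}{\varepsilon}Lf^\varepsilon
=\sum_{k=1}^{6}\varepsilon^{k-1}\Gamma(F_k,f^\varepsilon)+\varepsilon^{2}\Gamma(\sqrt{\mathcal{M}}\,f^\varepsilon,f^\varepsilon)+A(t,\barx,\barp),
\end{equation*}
where $\Gamma$ denotes the symmetrized bilinear collision form divided by $\sqrt{\mathcal{M}}$, and $A$ is an explicit source coming from the unresolved order $\varepsilon^{6}$ and $\varepsilon^{7}$ pieces; crucially $\|A\|_2$ and $\|A\|_{\infty,\ell}$ are bounded uniformly in $\varepsilon$ in terms of $\mathcal{M},F_1,\ldots,F_6$. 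The assumption \eqref{juttnerB} guarantees that $\mathcal{M}$ is comparable to powers of the global Maxwellian $J$, which is exactly what is needed to apply the linearized estimates of \cite{strainSOFT,strainNEWT}: the standard coercivity $\langle Lf,f\rangle\ge c_0|(I-\mathbf P)f|_\nu^2$, Grad-type splittings $L=\nu-K$, and the nonlinear bounds on $\Gamma$ summarized in the lemmas referenced in the paper (Lemma \ref{newNONlin}, \ref{boundK2}, \ref{boundKinfX}, \ref{sSOFT}).

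Then I would close two coupled a priori estimates. For the $L^2$ estimate, take the $L^2_{\barx,\barp}$ inner product with $f^\varepsilon$; the coercivity of $L$ yields a dissipative term $\varepsilon^{-1}\|(I-\mathbf P)f^\varepsilon\|_\nu^2$ that absorbs the genuinely singular contributions coming from $\Gamma(F_k,f^\varepsilon)$ after splitting into hydrodynamic and microscopic parts and using the fact that $F_k$ is orthogonal in the appropriate sense to $\ker L$ for $k\ge 1$. The cubic term $\varepsilon^{2}\Gamma(\sqrt{\mathcal{M}}f^\varepsilon,f^\varepsilon)$ is where the $L^\infty$ bound enters, estimated by $\varepsilon^{2}\|f^\varepsilon\|_{\infty,\ell}\|f^\varepsilon\|_\nu^2$; the factor $\varepsilon^{3/2}$ in front of $\|\effRE/\sqrt{\mathcal{M}}\|_{\infty,\ell}$ in the theorem is precisely calibrated so that this term is absorbed by the dissipation. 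For the $L^\infty$ estimate, I would integrate along the straight-line relativistic characteristics of $\partial_t+\hat P\!\cdot\!\nabla_{\barx}$, expressing $f^\varepsilon$ through a Duhamel formula with exponential weight $e^{-\nu(\barp)t/\varepsilon}$, iterating once so that the compact operator $K$ gains an $L^2$-in-$\barp$ smoothing, and then bounding the resulting iterated integral by the $L^2$ norm already controlled. This is precisely the Guo-type $L^2$--$L^\infty$ interplay; the weight $w_\ell$ with $\ell\ge 9$ is needed to compensate for the polynomial growth in $\barp$ produced by $P^\kappa\partial_\kappa$ acting on the lower-order $F_k$.

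Existence of $\effRE$ for $\varepsilon$ sufficiently small then follows from a standard continuity/bootstrap argument on the lifespan, or equivalently from a linearized iteration scheme for which the same estimates provide uniform bounds and contraction; uniqueness is an immediate consequence of the linear $L^2$ estimate applied to the difference of two solutions. The final $O(\varepsilon)$ bound on $F^\varepsilon-\mathcal{M}$ in $L^2$ and $L^\infty$ is a direct consequence of the representation \eqref{hilbertE} together with the uniform control on $F_1,\ldots,F_6$ and on $\effRE$. The \textbf{main obstacle} I anticipate is closing the cubic nonlinear term $\varepsilon^{2}\Gamma(\sqrt{\mathcal{M}}f^\varepsilon,f^\varepsilon)$: this requires a delicate simultaneous propagation of the weighted $L^\infty$ and $L^2$ bounds at the precise $\varepsilon$-weighting built into the statement of the theorem, and handling the relativistic collision kernel under the hypothesis \eqref{hypNU} requires the full strength of the estimates from \cite{strainSOFT,strainNEWT} rather than a direct transcription of the Newtonian argument. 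A secondary technical point is verifying that the hydrodynamic parts of the inhomogeneities in the equations for $F_k$ really are orthogonal to $\ker L$, which amounts to checking the rE system once more and relies on the relations \eqref{E:PnTEulerrelationintro}--\eqref{E:nEntzrelationintro} identified by Proposition \ref{P:Maxwellianidentities}.
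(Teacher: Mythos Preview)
Your overall strategy matches the paper's: Hilbert expansion to determine $F_0,\ldots,F_6$, then an $L^2$--$L^\infty$ bootstrap on the remainder in the spirit of Guo and Guo--Jang--Jiang, closed via Gronwall. Two technical points deserve correction, however.

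First, the paper does \emph{not} run the $L^\infty$ argument on $f^\varepsilon=\effRE/\sqrt{\mathcal{M}}$ but introduces a second unknown $h^\varepsilon\eqdef\effRE/\sqrt{J}$ normalized by the \emph{global} Maxwellian (see \eqref{remH}). The equation for $f^\varepsilon$ contains the streaming term $\big(\{\partial_t+\hat P\cdot\partial_{\barx}\}\sqrt{\mathcal{M}}\big)/\sqrt{\mathcal{M}}\cdot f^\varepsilon$, which grows linearly in $\barp$; in the $L^2$ energy estimate this is handled by a high/low momentum splitting that borrows from $\|h^\varepsilon\|_{\infty,\ell}$, but in a Duhamel iteration along characteristics it would compete unfavorably with the damping $\nu(\barp)/\varepsilon$ (for soft potentials $\nu$ even decays in $\barp$). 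Switching to $h^\varepsilon$ removes this term entirely since $J$ is $(t,\barx)$-independent, and \eqref{juttnerB} makes the two normalizations comparable for the final statement. Second, your assertion that ``$F_k$ is orthogonal to $\ker L$ for $k\ge 1$'' is false: each $F_k$ carries a macroscopic part $\Phi_k\in\ker L$ whose coefficients solve linearized Euler equations. The paper instead bounds the $\Gamma(F_i/\sqrt{\mathcal{M}},f^\varepsilon)$ terms by $C\|f^\varepsilon\|_\nu^2$ using Lemma~\ref{newNONlin} (with $F_i/\sqrt{\mathcal{M}}$ rapidly decaying by Proposition~\ref{iterateBDS} and \eqref{juttnerB}), then splits $f^\varepsilon=\mathbf{P}f^\varepsilon+(\mathbf{I}-\mathbf{P})f^\varepsilon$; the microscopic piece is absorbed by the $\varepsilon^{-1}$ dissipation for small $\varepsilon$, and the macroscopic piece is controlled by $\|f^\varepsilon\|_2^2$ since $\nu$ is bounded on the finite-dimensional kernel. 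The cubic term is likewise not absorbed by dissipation but appears as $\sqrt{\varepsilon}\,\|\varepsilon^{3/2}h^\varepsilon\|_{\infty,\ell}\|f^\varepsilon\|_2^2$ on the right-hand side and is handled by Gronwall.
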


\begin{remark} \label{R:upperLOWER}
Conditions  which would imply \eqref{juttnerB} are standard in the Hilbert expansion literature.  It generally seems to be unclear at the moment how to remove them in the context of classical solutions.  
We choose to make the assumption \eqref{juttnerB} rather than a more stringent assumption of moderate temperature variation in order to make it clear that \eqref{juttnerB} is all that is needed.  The condition \eqref{juttnerB} is used ensure that the 
local Maxwellian $\mathcal{M}$, and the terms in the Hilbert expansion $F_n$, have sufficient momentum decay; as used in Section \ref{SS:rBestimates}.  
\end{remark}

\begin{remark} \label{R:hilbertINITIAL}
We have not specified precisely the initial conditions for the terms $F_1(0)$, $\ldots$, $F_6(0)$, and $\effRE(0)$ in Theorem \ref{MainThm}.
They are constructed  via the Hilbert expansion in Section \ref{SS:HilbertExpansion}, after one has the fluid initial conditions as in Theorem \ref{MainThmEuler}.
\end{remark}

\begin{remark} \label{R:Regularity}
In the hypotheses of Theorem \ref{MainThm}, we have not been specific in stating the regularity needed from the relativistic Euler equations that is required to build the terms $F_1,\ldots, F_6$
in the Hilbert expansion. This is because we are not attempting to optimize the amount of regularity needed in  the Hilbert expansion. In particular, including additional terms in the  expansion would require a smoother solution to the rE system. However, for the purposes of this article, it is certainly sufficient to have 
$\big(n(t,\barx), \Temp(t,\barx), u(t,\barx)\big) \in C^{7}([0,T] \times \mathbb{R}^3_{\barx})$ with suitable $L^2_{\barx}$ integrability.
\end{remark}

In the next lemma, we prove that near-constant, non-vacuum fluid states necessarily verify an inequality  
of the form \eqref{juttnerB}.

\begin{lemma} \label{L:nearconstantstates}
Assume that the initial data for the rE system satisfy the assumptions of Theorem \ref{MainThmEuler}, including the smallness assumption in $\delta,$ and let $\mathcal{M}(n,\Temp,u;\barp)$ be the local Maxwellian \eqref{E:Maxwelliandef} corresponding to the solution. Then if $\delta$ is sufficiently small, there exist a non-trivial slab of the form $[0,C'/\delta] \times \mathbb{R}^3_{\barx},$ and constants $C>0,$ $\Temp_M> 0,$ $\alpha \in (1/2,1),$
such that for every 
$
(t,\barx,\barp)
\in 
[0,C'/\delta] \times \mathbb{R}^3_{\barx} \times \mathbb{R}^3_{\barp},
$
the global Maxwellian $J(\barp) \eqdef e^{-cP^0/(k_B\Temp_M)}$ from \eqref{juttner} 
and $\mathcal{M}(n,\Temp,u;\barp)$ verify the inequalities \eqref{juttnerB}.
\end{lemma}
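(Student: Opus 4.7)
The plan is to verify \eqref{juttnerB} by first extracting pointwise closeness of the fluid variables to their constant background values from Theorem~\ref{MainThmEuler}, and then choosing $\Temp_M$ inside a narrow window determined by $\underline{\Temp}$ and $\alpha$. First I would invoke Theorem~\ref{MainThmEuler} to produce the solution $\mathbf{V} = (n, \Temp, u^1, u^2, u^3)$ on the slab $[0, C'/\delta] \times \mathbb{R}^3$. The energy estimates that underlie the local existence proof yield $\|\mathbf{V}(t) - \underline{\mathbf{V}}\|_{H_{\barx}^N} \lesssim \delta$ uniformly on this slab, and since $N \geq 3$, the Sobolev embedding $H^N(\mathbb{R}^3) \hookrightarrow L^\infty(\mathbb{R}^3)$ upgrades this to the pointwise estimate $|\Temp - \underline{\Temp}| + |n - \underline{n}| + c^{-1}\sum_{j=1}^{3}|u^j| \lesssim \delta$ throughout $[0,C'/\delta] \times \mathbb{R}^3$. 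In particular, this tightens the crude range $\Temp_* < \Temp < 2\Temp_*$ of \eqref{assumption1-T} to a $C\delta$-neighborhood of $\underline{\Temp}$, which is crucial for what follows.

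Next I would exploit the normalization of $u$. From $u_\kappa u^\kappa = -c^2$ and smallness of $u^j/c$, one has $u^0 = c\sqrt{1 + c^{-2}|\bar{u}|^2} = c(1 + O(\delta^2))$. Writing $-u_\kappa P^\kappa = u^0 P^0 - \sum_{a=1}^3 u^a P^a$ and bounding $|P^a| \leq P^0$, I obtain a constant $K$ depending only on $\underline{\mathbf{V}}$ such that
\begin{equation*}
cP^0(1 - K\delta) \;\leq\; -u_\kappa P^\kappa \;\leq\; cP^0(1 + K\delta)
\end{equation*}
uniformly in $(t,\barx,\barp)$. Substituting into \eqref{E:Maxwelliandef} and combining with the pointwise control on $\Temp$, the exponent simplifies to $u_\kappa P^\kappa/(k_B \Temp) = -cP^0(1+O(\delta))/(k_B \underline{\Temp})$. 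Meanwhile the prefactor $nz/(4\pi m_0^3 c^3 K_2(z))$ is a continuous strictly positive function of $(n,z)$ on the compact set where the fluid variables live, so it is bounded above and below by positive constants, which may be absorbed into the constant $C$ of \eqref{juttnerB}.

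The final step is to select $\Temp_M$. Taking logarithms in \eqref{juttnerB} shows that the two inequalities hold as soon as the $P^0$-coefficient of $\log(\mathcal{M}/J)$ is non-negative and that of $\log(\mathcal{M}/J^\alpha)$ is non-positive; from the computation above this forces
\begin{equation*}
\frac{\alpha \underline{\Temp}}{1 - K'\delta} \;\leq\; \Temp_M \;\leq\; \frac{\underline{\Temp}}{1 + K'\delta}
\end{equation*}
for a modified constant $K'$. I would take $\Temp_M := \underline{\Temp}(1+\alpha)/2$, which lies inside this window provided $\delta$ is sufficiently small depending on $\alpha$ and the earlier constants. The main obstacle is precisely the tension between the two inequalities: the upper bound $\mathcal{M} \leq CJ^\alpha$ pushes $\Temp_M$ upward while the lower bound $\mathcal{M} \geq J/C$ pushes it downward, and these are reconciliable only because $\alpha$ is strictly less than $1$. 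This is where the hypothesis $\alpha \in (1/2,1)$ is used decisively, and also why the crude bound $\Temp < 2\Temp_*$ of Theorem~\ref{MainThmEuler} does not by itself suffice — only the pointwise $O(\delta)$-closeness of $\Temp$ to $\underline{\Temp}$ renders the window non-empty.
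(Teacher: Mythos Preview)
Your argument is correct and follows the same route as the paper: bound $-u_\kappa P^\kappa/(k_B\Temp)$ above and below by multiples of $P^0$, control the prefactor on a compact set of fluid states, and then choose $\Temp_M$ in the resulting window. You are in fact more explicit than the paper on one point --- you invoke Sobolev embedding to obtain $|\Temp - \underline{\Temp}| \lesssim \delta$, which is precisely what makes the window for $\Temp_M$ non-empty; the paper quotes only the looser bound $\Temp_* < \Temp < 2\Temp_*$, which taken literally gives a temperature ratio of $2$ and would yield only $\alpha \leq 1/2$. One minor correction: you describe $\alpha \in (1/2,1)$ as a ``hypothesis used decisively,'' but $\alpha$ is part of the \emph{conclusion} of the lemma, not an input, and your own argument shows that any fixed $\alpha < 1$ can be achieved once $\delta$ is taken small enough --- the lower bound $\alpha > 1/2$ plays no role in this lemma and is needed only in the later Boltzmann estimates.
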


\begin{proof}
To prove that \eqref{juttnerB} holds, we first recall that by the conclusions of Theorem \ref{MainThmEuler},
there exist a non-trivial slab $[0,C'/\delta] \times \mathbb{R}^3_{\barx},$ and
constants $n_{min} > 0, n_{max} > 0, \Temp_{*} > 0,$ and $C > 0,$ such that
relative to our inertial coordinate system, the solution satisfies the following inequalities 
on $[0,C'/\delta] \times \mathbb{R}^3_{\barx}:$
\begin{align*}
0 < n_{min} \leq n(t,\barx) & < n_{max}, && \\ 
0 < \Temp_* < \Temp(t,\barx) & < 2\Temp_*, && \\
|c^{-1} u^j(t,\barx)| & \leq C \delta, && (j=1,2,3).
\end{align*}
Therefore, using the Cauchy-Schwarz inequality, it is easy to show that there exist dimensionless constants $\alpha \in (1/2,1), C_1 > 0, C_2 > 0,$ depending on the dimensionless constants $m_0^{-1} c^{-2}k_B \Temp_*$ and $\delta,$ such that if $\delta$ is sufficiently small, then for all $(t,\barx,\barp) \in [0,C'/\delta] \times \mathbb{R}_{\barx}^3 \times \mathbb{R}_{\barp}^3$ we have
\begin{align} \label{E:argumentofexpbound}
	m_0^{-1}c^{-1} C_1 P^0 & \leq \Big|\frac{u_{\kappa} P^{\kappa}}{k_B \Temp} \Big| \leq m_0^{-1}c^{-1} C_2 P^0, \\
	C_2 & \leq \alpha^{-1} C_1.
	\notag
\end{align}
We also observe that since $u, P$ are both future-directed and timelike, we have that
$u_{\kappa} P^{\kappa} \leq - |u_{\kappa} u^{\kappa}|^{1/2} |P_{\lambda} P^{\lambda}|^{1/2} = - m_0 c^2.$ In particular, $u_{\kappa} P^{\kappa} < 0.$ It thus follows from definitions \eqref{juttner} and 
\eqref{E:Maxwelliandef}, the uniform bounds on $n(t,\barx)$ and $\Temp(t,\barx)$ (which are \emph{strict} for $\Temp$), 
and inequality \eqref{E:argumentofexpbound}, 
that for any positive constant $\Temp_M$ satisfying
\begin{align} 
\notag
	C_2 \leq m_0 c^2/(k_B \Temp_M) \leq \alpha^{-1} C_1,
\end{align}
there exists a dimensionless constant $C > 1,$ depending on the constants \\
$m_0^{-3}c^{-3}h^{3}n_{min}$, $m_0^{-3}c^{-3}h^{3}n_{max}$,
$m_0^{-1} c^{-2}k_B \Temp_*,$ and $\delta,$ such that inequality \eqref{juttnerB} holds.
\end{proof}

\subsection{Historical background} \label{SS:History}
There have been numerous important contributions to the subject of fluid dynamic limits of the Newtonian Boltzmann equation.  Due to length constraints, it is impossible to give a comprehensive list.  We only point out a brief few works, including the early work of Grad \cite{MR0135535,MR0156656}.  
In the context of DiPerna-Lions \cite{MR1014927} renormalized weak solutions, we mention the fluid limits shown in \cite{MR2025302,arsenioPHD,MR1054287,MR1842343,MR1115587,MR1213991,MR1650310,MR1029125,MasmoudiLevermore,MR2517786}.
We refer to the review articles \cite{MR2182829,MR1911233,MR1942465} for a more comprehensive list of references.

For fluid limits in the context of strong solutions to the non-relativistic Euler and Boltzmann equations, we mention the work of Nishida \cite{MR0503305},  Caflisch \cite{MR586416},  Bardos-Ukai \cite{MR1115292},  Guo \cite{MR2172804,MR2275331}, 
Liu-Yang-Yu \cite{MR2043729}, Guo-Jang-Jiang \cite{MR2472156} and recently  Guo-Jang \cite{2009arXiv0910.5512G}.  In this work we will use the Hilbert expansion approach from Caflisch \cite{MR586416} combined with the recent developments in
Guo-Jang-Jiang \cite{MR2472156} to allow a non-zero initial condition for the remainder: $\effRE(0,\barx,\barp)$.

Other works which connect to and motivate different elements of our estimates/ results include: \cite{MR2209761,MR2366140,MR2100057,strainSOFT,strainNEWT,jS2008a,dC2007,MR2013332}.  They will be discussed in more detail at the appropriate time in the following developments.

Much less is known for the relativistic Boltzmann equation.  Formal fluid limit calculations are shown in the textbooks \cite{sdGsvLwvW1980,MR1898707}.
Linearized hydrodynamics are also studied in \cite{MR1031410}.  Our new contribution is to prove the existence of ``normal solutions" via a Hilbert expansion to the full non-linear relativistic Boltzmann equation.  It would be useful to check if these types of solutions can also be constructed for the relativistic Landau equation, as in for instance \cite{MR2100057}.

\subsection{Outline of the structure of the article}
We now briefly outline the remainder of this article. In Section \ref{S:rE}, we sketch a proof of local existence for the rE system that is based on well-known techniques. In Section \ref{S:rB}, we provide additional background for the relativistic Boltzmann equation. In Section \ref{SS:alternateexpression}, we provide an expression for the Boltzmann collision operator that will be used in the subsequent analysis. In Section \ref{SS:Macroscopicquantities}, we study macroscopic quantities associated to a particle density function $F(t,\barx,\barp),$ and we discuss the corresponding conservation laws that hold whenever $F$ is a solution to the rB equation.  In Section \ref{SS:Maxwellainmacroscopic}, we discuss the thermodynamic relations that hold between the macroscopic quantities associated to the special case $F = \mathcal{M},$ where $\mathcal{M}$ is a Maxwellian. In Section \ref{SS:SolveforTemp}, we discuss the issue of solving for $(n,\Temp)$ in terms of $(\Ent,\pressure)$, 
assuming that the aforementioned thermodynamic relations hold. In Section \ref{SS:hyperbolicity}, we prove that there are regimes of hyperbolicity for the rE system under the same thermodynamic relations. In Section \ref{SS:HilbertExpansion}, we carry out the Hilbert expansion for $F^\varepsilon$ in detail. Finally, in Section \ref{SS:rBestimates}, we provide a proof of our main main theorem.

\section{Local existence for the relativistic Euler equations} \label{S:rE}

In this section we sketch a proof of local existence for the rE system; i.e, we
sketch a proof of Theorem \ref{MainThmEuler} of Section \ref{SS:Mainresults}. During the course of the proof, 
which is mostly along standard lines, we provide references that indicate where one may find the omitted details.\\

\noindent \textbf{Sketch of the proof of Theorem \ref{MainThmEuler}:}
Theorem \ref{MainThmEuler} can be proved using the method of \emph{energy currents}, a technique which was 
first applied to the rE system by Christodoulou in \cite{dC2007}. For complete details, one can first
look in \cite{jS2008a}, which describes in detail how to derive energy estimates for the linearized rE system; we
sketch this derivation below. Once one has these energy estimates, the proof of local existence for the non-relativistic Euler equations given in \cite{aM1984} can be easily modified to handle the rE system. One may also consult \cite{lH1997},
\cite{jSmS1998}, or \cite{jS2008a} for the essential ideas on how to finish the proof once one has energy estimates for the linearized system. Furthermore, although we do not need it for this article, we remark that \cite{jS2008a} contains a proof of continuous dependence on initial data.

In order to apply this method, we use $\mathbf{W} \eqdef (\Ent, \pressure, u^1, u^2, u^3)$ as our unknowns
for the equations \eqref{E:rEentropy} - \eqref{E:rEq}. The assumptions on the data allow us to use the 
functions $\mathfrak{H}$ and $\mathfrak{P}$ from \eqref{E:changeofstatespacevariablesH} - \eqref{E:changeofstatespacevariablesP} to transform the initial data (see Remark \ref{R:Convexity})
$\mathring{\mathbf{V}} \eqdef (\mathring{n}, \mathring{\Temp}, \mathring{u}^1, \mathring{u}^2, \mathring{u}^3)$ to initial data $\mathring{\mathbf{W}} \eqdef (\mathring{\Ent}, \mathring{\pressure}, \mathring{u}^1, \mathring{u}^2, \mathring{u}^3)$ such that
\begin{align} \label{E:WinitialdatafiniteHNnorm}
	\| \mathring{\mathbf{W}} - \underline{\mathbf{W}} \|_{H_{\barx}^N} < \infty,
\end{align}
and such that
%\begin{subequations}
\begin{align*}
	0 & < \inf_{\barx \in \mathbb{R}^3}\mathring{\Ent}(\barx), \\
	0 & < \inf_{\barx \in \mathbb{R}^3} \mathring{\pressure}(\barx),
\end{align*}
%\end{subequations}
where $\underline{\mathbf{W}} \eqdef (\underline{\Ent}, \underline{\pressure}, 0, 0, 0).$ Here, $\underline{\Ent}$ and $\underline{\pressure}$ are equal to $\mathfrak{H}(\underline{n}, \underline{\Temp})$ and $\mathfrak{P}(\underline{n}, \underline{\Temp})$ respectively. That \eqref{E:WinitialdatafiniteHNnorm} follows from \eqref{E:VdatainHN} can be shown via Sobolev-Moser type estimates; see e.g. the appendix of \cite{jS2008a}.

Typical proofs of local existence are based on either the construction of convergent sequence of iterates, or a contraction mapping argument. Both of these arguments require that one prove energy estimates for the linearized Euler equations, which are the following system:
\begin{subequations}
\begin{align}
	\widetilde{u}^\mu \partial_\mu \dot{{\Ent}} &= \mathfrak{F},  &&                              \label{E:EOV1} \\
 	\widetilde{u}^\mu \partial_\mu \dot{p} +
 	\widetilde{q}\frac{\widetilde{u}_k}{\tilde{u}^0}
  	\partial_0\dot{u}^k+ \widetilde{q} \partial_k \dot{u}^k  &= \mathfrak{G}, &&         \label{E:EOV2} \\
   	(\widetilde{\rho} + \widetilde{p})\widetilde{u}^\mu \partial_\mu
  	\dot{u}^j + \widetilde{\Pi}^{\mu j} \partial_\mu \dot{p} &= \mathfrak{H}^j, &&
   	(j =1,2,3).                                                 \label{E:EOV3} 
\end{align}
\end{subequations}
In the above equations, $\widetilde{\mathbf{W}} \eqdef (\widetilde{\Ent}, \widetilde{\pressure}, \widetilde{u}^1, \widetilde{u}^2, \widetilde{u}^3)$ is the ``background" (which can be thought of as the previous iterate in an iteration scheme), $\dot{\mathbf{W}} \eqdef (\dot{\Ent}, \dot{\pressure}, \dot{u}^1, \dot{u}^2, \dot{u}^3)$ is the \emph{variation} (which can be thought of as either the next iterate or one of its spatial derivatives), and the terms $\mathbf{b} \eqdef (\mathfrak{F},\mathfrak{G},\mathfrak{H}^1,\mathfrak{H}^2,\mathfrak{H}^3)$ are the inhomogeneous terms that arise from the iteration + differentiation procedure. Furthermore, $\widetilde{u}^0 = \sqrt{c^2 + \sum_{j=1}^3 (\widetilde{u}^j)^2},$ $\widetilde{\Pi}^{\mu \nu} \eqdef \widetilde{u}^{\mu} \widetilde{u}^{\nu} + (g^{-1})^{\mu \nu},$ and $\widetilde{q}$ is the function of the state-space variables from \eqref{E:rEq} evaluated at the background.

To deduce energy estimates for the linearized systems, one first defines energy currents $\dot{\mathscr{J}},$ which are vectorfields
that depend quadratically on the variations $\dot{\mathbf{W}}$:
%$\dot{\mathbf{W}} \eqdef (\dot{\Ent}, \dot{\pressure},\dot{u}^1, \dot{u}^2, \dot{u}^3):$
\begin{gather}                                                                                       
\dot{\mathscr{J}}^0(\dot{\mathbf{W}},\dot{\mathbf{W}})  \eqdef \widetilde{u}^0 \dot{{\Ent}}^2   +
                    \frac{\widetilde{u}^0}{\widetilde{q}}\dot{\pressure}^2 +
                    2\frac{\widetilde{u}_k\dot{u}^k}{\widetilde{u}^0} \dot{\pressure}
                 + (\widetilde{\rho} + \widetilde{\pressure})\widetilde{u}^0 \Big[\dot{u}^k
                    \dot{u}_k - \frac{(\widetilde{u}_k
                    \dot{u}^k)^2}{(\widetilde{u}^0)^2} \Big], \label{E:EnergyCurrent}  
                    \\
\dot{\mathscr{J}}^j(\dot{\mathbf{W}},\dot{\mathbf{W}})  \eqdef \widetilde{u}^j \dot{{\Ent}}^2  +
                    \frac{\widetilde{u}^j}{\widetilde{q}} \dot{\pressure}^2 + 2 \dot{u}^j \dot{\pressure}
                 + (\widetilde{\rho} + \widetilde{\pressure})\widetilde{u}^j \Big[\dot{u}^k
                    \dot{u}_k - \frac{(\widetilde{u}_k \dot{u}^k)^2}{(\widetilde{u}^0)^2} \Big], \quad (j=1,2,3). \notag
\end{gather}
The two key properties of the above energy currents, which are discussed in detail in \cite{jS2008a},
are \textbf{(i)} it can be shown that $\dot{\mathscr{J}}^0(\dot{\mathbf{W}},\dot{\mathbf{W}})$ is a positive definite quadratic form in $\dot{\mathbf{W}},$ and \textbf{(ii)} if $\dot{\mathbf{W}}$ is a solution to the linearized rE system \eqref{E:EOV1} - \eqref{E:EOV3}, then it can be shown that $\partial_{\kappa}[\dot{\mathscr{J}}^{\kappa}(\dot{\mathbf{W}},\dot{\mathbf{W}})]$ does not depend on the derivatives of $\dot{\mathbf{W}}.$ More specifically, the following formula holds:
        \begin{multline}
 \label{E:EnergyCurrentDivergence}
                \partial_\mu \dot{\mathscr{J}}^\mu  =
                    (\partial_\mu \widetilde{u}^\mu) \dot{{\Ent}}^2 +
                    \partial_\mu \left(\frac{\widetilde{u}^\mu}{\widetilde{q}}\right) \dot{\pressure}^2
                   +  2 \partial_0 \left(\frac{\widetilde{u}_k}{\widetilde{u}^0}\right) \dot{u}^k \dot{\pressure}                
                   \\
                  + \partial_\mu[(\widetilde{\rho} + \widetilde{\pressure})\widetilde{u}^\mu]
                    \left[\dot{u}^k \dot{u}_k - \frac{(\widetilde{u}_k
                    \dot{u}^k)^2}{(\widetilde{u}^0)^2} \right]
                    - 2\widetilde{u}_k \dot{u}^k(\widetilde{\rho}
                    + \widetilde{\pressure})\left(\frac{\widetilde{u}^\mu}{\widetilde{u}^0}\right)
                    \partial_\mu \left(\frac{\widetilde{u}_j}{\widetilde{u}^0}\right)\dot{u}^j                  
                    \\
                   + 2 \dot{{\Ent}}\mathfrak{F} + 2\frac{\dot{\pressure}\mathfrak{G}}{\widetilde{q}} + 2
                    \dot{u}_k \mathfrak{H}^{k} - 2 \frac{\widetilde{u}_j \mathfrak{H}^j \widetilde{u}_k
                    \dot{u}^k}{(\widetilde{u}^0)^2},
\end{multline}
where $(\mathfrak{F},\mathfrak{G},\mathfrak{H}^1,\mathfrak{H}^2,\mathfrak{H}^3)$ are the inhomogeneous terms in \eqref{E:EOV1}- \eqref{E:EOV3}.

We therefore can define an  
energy $\mathscr{E}(t) \geq 0$ by
\begin{align} \label{E:Edef}
		\mathscr{E}^2(t) \eqdef 
		%\sum_{0 \leq |\vec{\alpha}| \leq N} 
		\int_{\mathbb{R}^3} 
			\dot{\mathscr{J}}^0 (\dot{\mathbf{W}}, \dot{\mathbf{W}} ) \, d \barx,
\end{align}
and $\mathscr{E}$ can be used to control $\| \dot{\mathbf{W}} \|_{L^2}.$ More specifically, property \textbf{(i)} can be used to show that if $\widetilde{q}, \widetilde{p}, \widetilde{\rho}$ are uniformly bounded from above and away from $0$ on $[0,t]\times \mathbb{R}^3,$ and the $|\widetilde{u}^j|$ are bounded from above on $[0,t] \times \mathbb{R}^3,$ then there exists a constant $C > 0$ depending only on the values of $\widetilde{\mathbf{W}},$ $\| \partial_t \widetilde{\mathbf{W}} \|_{L^{\infty}},$ and $\| \partial_{\barx} \widetilde{\mathbf{W}} \|_{L^{\infty}},$ such that on $[0,t],$ we have 
\begin{align} \label{E:EHNequivalent}
	C^{-1} \| \dot{\mathbf{W}} \|_{L^2} \leq \mathscr{E} \leq C \| \dot{\mathbf{W}} \|_{L^2}.
\end{align}

\begin{remark} \label{R:hyperbolicity}
	If $\widetilde{q} < 0,$ then $\dot{\mathscr{J}}^0$ is no longer positive definite, and
	inequality \eqref{E:EHNequivalent} fails. Since $\widetilde{q} = c^2 \left. \frac{\partial \widetilde{\pressure}}{\partial 
	\widetilde{\rho}} \right|_{\Ent} (\widetilde{\rho} + \widetilde{\pressure}),$ it is clear that the non-negativity of 
	$\left. \frac{\partial \widetilde{\pressure}}{\partial \widetilde{\rho}} \right|_{\Ent}$
	plays a fundamental role in the well-posedness of the rE system. This explains the significance of
	Lemma \ref{L:speedofsound} and Conjecture \ref{C:Speedisreal}. Furthermore, we note that the conditions on the initial data 
	guarantee that $\mathring{q}(\barx) \eqdef q(0,\barx)$ is uniformly positive. 
\end{remark}

Furthermore, the Cauchy-Schwarz inequality for integrals, \eqref{E:EnergyCurrentDivergence}, and \eqref{E:EHNequivalent} imply that
\begin{align} \label{E:SobolevMoserdivergence}
	 \| \partial_{\kappa} [\dot{\mathscr{J}}^{\kappa} (\dot{\mathbf{W}}, \dot{\mathbf{W}})] \|_{L^1}
		& \leq C \| \dot{\mathbf{W}} \|_{L^2} \| \mathbf{b} \|_{L^2} \leq C \mathscr{E} \| \mathbf{b} \|_{L^2}.
\end{align}
Using \eqref{E:Edef}, the divergence theorem (assuming suitable fall-off conditions at infinity), and \eqref{E:SobolevMoserdivergence}, we have that

\begin{align*} 
	\frac{d}{dt} \big(\mathscr{E}^2 \big) = c \frac{d}{dx^0} \big(\mathscr{E}^2 \big)
		& = c \int_{\mathbb{R}^3} \partial_{\kappa} [\dot{\mathscr{J}}^{\kappa} (\dot{\mathbf{W}}, \dot{\mathbf{W}})]  \, d \barx 
		\\ 
	& \leq C \mathscr{E} \| \mathbf{b} \|_{L^2},
\end{align*}
which leads to the following energy estimate for the linearized system:
\begin{align} \label{E:linearizedenergyestimate}
	\frac{d}{dt} \mathscr{E}(t) & \leq C \| \mathbf{b} \|_{L^2}.
\end{align}
The availability of \eqref{E:linearizedenergyestimate} is the fundamental reason that the rE system has local-in-time solutions
belonging to a Sobolev space. This concludes our abbreviated discussion of the existence aspect of Theorem \ref{MainThmEuler}
in terms of $\mathbf{W};$ we comment on the variables $\mathbf{V} \eqdef (n, \Temp, u^1, u^2, u^3)$ near the end of the proof.

Let us now assume that we have a local solution $\mathbf{W}$ to the nonlinear rE equations near the constant state $\underline{\mathbf{W}};$ we will make a few remarks about the time of existence. In the nonlinear case, one can define energies (with $N \geq 3$ so that various Sobolev-Moser type inequalities are valid)
\begin{align}\notag
	\mathscr{E}_N^2(t) \eqdef 
	\sum_{0 \leq |\vec{\alpha}| \leq N} \int_{\mathbb{R}^3}  \dot{\mathscr{J}}^0 \big(
	\partial_{\vec{\alpha}} (\mathbf{W} - \underline{\mathbf{W}}),\partial_{\vec{\alpha}} (\mathbf{W} - \underline{\mathbf{W}})
	\big) \, d \barx.
\end{align}
Furthermore, all of the inhomogeneities that arise upon differentiating the rE equations are of quadratic order or higher. Consequently, near a constant state, the inhomogeneous terms analogous to the term $\| \mathbf{b} \|_{L^2}$ on the right-hand side of \eqref{E:linearizedenergyestimate} can be bounded by $C \mathscr{E}_N^2(t),$ and the resulting energy estimate is
\begin{align} \label{E:NOTlinearizedenergyestimate}
	\frac{d}{dt} \mathscr{E}_N(t) \leq C \mathscr{E}_N^2(t).
\end{align}
Consequently, we may apply Gronwall's inequality to \eqref{E:NOTlinearizedenergyestimate} and 
use property \eqref{E:EHNequivalent} to deduce an a-priori estimate of the form
\begin{align} \label{E:WHNbound}
	\| \mathbf{W} - \underline{\mathbf{W}} \|_{H_{\barx}^N} & \leq 
	C \frac{\| \mathring{\mathbf{W}} - \underline{\mathbf{W}} \|_{H_{\barx}^N}}{1 - C t\| \mathring{\mathbf{W}} - 
		\underline{\mathbf{W}} \|_{H_{\barx}^N}}.
\end{align}
It follows from \eqref{E:WHNbound} that the time of existence is at least of size $\widetilde{C} /  \delta$.
This fact is a consequence of a standard continuation principle that is available for hyperbolic PDEs (consult e.g. \cite[Chapter 6]{lH1997} or \cite{jS2008b} for the essential ideas), which implies that the solution exists as long as the a-priori energy estimates lead to the conclusion that $\| \mathbf{W} - \underline{\mathbf{W}} \|_{H_{\barx}^N}$ is sufficiently small.

We remark that in the formula \eqref{E:WHNbound}, $C$ is a numerical constant that depends on an a-priori assumption concerning the subset of $(0,\infty) \times (0,\infty)$ to which the pair $(\Ent,\pressure)$ belongs, and on $B,$ where $B > 0$ is a fixed a-priori upper bound for $\| \mathbf{W} - \underline{\mathbf{W}} \|_{H_{\barx}^N}.$ These a-priori assumptions imply 
that the solution escapes neither the regime of hyperbolicity, nor a convex subset (see Remark \ref{R:Convexity}) of the regime in which the map $(n,z) \rightarrow \big(\mathfrak{H}(n,z), \mathfrak{P}(n,z) \big) =(\Ent,\pressure)$ is invertible; on the time interval $[0,\widetilde{C} /  \delta],$ these a-priori assumptions can be shown to hold through a bootstrap argument. Furthermore, once we have shown \eqref{E:WHNbound}, this inequality can be translated into an inequality for the original variables $\mathbf{V};$ Sobolev-Moser type estimates allow us to estimate (here we again use the convexity assumption discussed in Remark \ref{R:Convexity}) $\| \mathbf{V} - \underline{\mathbf{V}} \|_{H_{\barx}^N} \leq C \| \mathbf{W} - \underline{\mathbf{W}} \|_{H_{\barx}^N}.$ This is possible because $\mathbf{V}$ can be written as a smooth function of $\mathbf{W}$ whenever $(\Ent,\pressure)$ belongs to the domain of the inverse of the aforementioned map.

\section{The relativistic Boltzmann equation} \label{S:rB}

In this section, we provide some additional background material on the relativistic Boltzmann equation. The rB equation is often expressed in the physics literature (see e.g. \cite{MR1026740,sdGsvLwvW1980}) in the Lorentz-invariant form of \eqref{E:rBintro}
%\begin{align} \label{E:rBLorentzInvariant}
%	P^{\kappa} \partial_{\kappa} F =  \frac{1}{\varepsilon}  \mathcal{C}(F,F),
%\end{align}
where the \emph{collision kernel} $\mathcal{C}$ is defined by
\begin{equation}
\mathcal{C}(F,G)
=
 \frac {c}{2}\int_{\mathbb{R}^3} \frac{d\barq}{Q^0} \int_{\mathbb{R}^3}\frac{d\barq'}
 {Q'^0}\int_{\mathbb{R}^3}\frac{d\barp'}{P'^0}W(\barp, \barq | \barp', \barq') [F(\barp')G(\barq')-F(\barp)G(\barq)].
\notag
\end{equation}
In our analysis below, it will often be convenient to divide both sides of \eqref{E:rBintro} by $P^0.$ We therefore introduce the \emph{normalized velocity} $\phat,$ defined by
\begin{align}  
	\phat \eqdef \frac{c}{P^0} \barp,
	\notag
\end{align}
and 
$
\mathcal{Q}(\cdot,\cdot) \eqdef \frac{c}{P^0} \mathcal{C}(\cdot,\cdot).
$
Then \eqref{E:rBintro} is equivalent to
\begin{align} \label{E:rBequivalent}
	\partial_t F + \phat \cdot \partial_{\barx} F  = \frac{1}{\varepsilon} \mathcal{Q}(F,F),
\end{align}
where $\phat \cdot \partial_{\barx} \eqdef \phat^1 \partial_1 + \phat^2 \partial_2 + \phat^3 \partial_3.$

On the right-hand side of the above expression for $\mathcal{C},$ the variables $P$ and $Q$ represent the \emph{pre-collisional} four-momenta of a pair of particles, while $P'$ and $Q'$ represent their \emph{post-collisional} four-momenta. We assume that the particle collisions are elastic, in which case the conservation of energy\footnote{In our inertial coordinate system, the ``energy" of a four-vector is $c$ times its $0$ component, while its 3-momentum comprises its final $3$ components.} and 3-momentum can be expressed as
\begin{align}
P^\mu+Q^\mu=P'^{\mu}+ Q'^{\mu}, && (\mu = 0,1,2,3).
\label{collisionalCONSERVATION}
\end{align}
The \emph{transition rate} $W(\barp, \barq | \barp', \barq'),$ which is a Lorentz scalar, can be expressed as 
\begin{equation} \label{transitionrate}
W(\barp, \barq | \barp', \barq') 
=
s\sigma(\varrho, \vartheta) \delta^{(4)}(P^\mu+Q^\mu-P'^{\mu}- Q'^{\mu}),
%\label{transition}
\end{equation} 
where $\delta^{(4)}$ is a Dirac delta function, and $\sigma(\varrho, \vartheta)$ is the \emph{differential cross-section} or \emph{scattering kernel}.  The other quantities are defined in \eqref{gDEFINITION},
\eqref{angle}, and \eqref{sDEFINITION}.

We have several remarks to make. First, the Lorentz invariance of the left-hand side of \eqref{E:rBintro} is manifest, while the Lorentz invariance of the right-hand side follows from that of $\frac{d\barp'}{P'^0}$ (and similarly for $\barq, \barq'$); see \eqref{E:measureonPx}. Next, we remark that $\vartheta$ in \eqref{angle} is well-defined under \eqref{collisionalCONSERVATION}, but that it may not be in general \cite{MR1379589}; i.e., in general, the right-hand side of \eqref{angle} may be larger than $1$ in magnitude. Finally, we point out that both the left and right-hand sides of \eqref{E:rBequivalent} are functions of $(t,\barx,\barp).$

\subsection{Expression for the collision operator} \label{SS:alternateexpression}

In this section, we provide an alternate expression \eqref{collisionCM} for the collision operator, which is derived from carrying out certain integrations in a center-of-momentum frame. This is the expression for the collision operator that we will use in our analysis for the remainder of the article. We remark that yet another expression for the collision operator was derived in \cite{MR1211782}; see \cite{strainNEWT,strainCOOR} for an explanation of the connection between the expression from \cite{MR1211782} and the one in \eqref{collisionCM} below.

One may use Lorentz transformations as described in \cite{sdGsvLwvW1980} and in some detail in \cite{strainPHD,strainCOOR} to reduce the delta functions in \eqref{transitionrate}, thereby obtaining
\begin{equation}
\mathcal{Q}(F,G)
=
\int_{\mathbb{R}^3}  d\barq ~
\int_{\mathbb{S}^{2}} d\omega ~
~ v_{\o} ~ \sigma (\varrho,\vartheta ) ~ [F(\barp')G(\barq')-F(\barp)G(\barq)],
\label{collisionCM}
\end{equation}
where $v_{\o}=v_{\o}(\barp,\barq),$ the M\o ller velocity, is given by
\eqref{moller}.

The post-collisional 3-momenta in the expression (\ref{collisionCM}) can be written as follows:
\begin{equation}
\begin{split}
\barp'&=\frac{\barp+\barq}{2}+\frac{\varrho}{2}\left(\omega+(\gamma-1)(\barp+\barq)\frac{(\barp+\barq)\cdot \omega}{|\barp+\barq|^2}\right), 
\\
\barq^\prime&=\frac{\barp+\barq}{2}-\frac{\varrho}{2}\left(\omega+(\gamma-1)(\barp+\barq)\frac{(\barp+\barq)\cdot \omega}{|\barp+\barq|^2}\right),
\label{postCOLLvelCMsec2}
\end{split}
\end{equation}where $\gamma =(P^{0} + Q^{0})/\sqrt{s}$, and $\cdot$ is the ordinary Euclidean dot product in $\mathbb{R}^3.$ 
The energies can be expressed as
\begin{equation}
\begin{split}
P'^0&=\frac{P^{0}+Q^{0}}{2}+\frac{\varrho}{2\sqrt{s}}\omega\cdot (\barp+\barq), 
\\
Q'^0&=\frac{P^{0}+Q^{0}}{2}-\frac{\varrho}{2\sqrt{s}}\omega\cdot (\barp+\barq).
\end{split}
\notag
\end{equation}
It is clear that $P,Q,P',Q'$ satisfy \eqref{collisionalCONSERVATION}. Additionally, it is explained in 
\cite{strainPHD}
that after carrying out the integrations in a center-of-momentum frame, 
the scattering angle becomes a function of $\barp, \barq,$ and satisfies
$
\cos\vartheta = k\cdot \omega,
$
where $\omega \in \mathbb{S}^2 \subset \mathbb{R}^3,$ and $k = k(\barp,\barq) \in \mathbb{S}^2 \subset \mathbb{R}^3.$
We remark that an expression for $k(\barp,\barq)$ is given in \cite[Equation (5.37)]{strainPHD} and \cite{strainCOOR}, but that its precise form is not needed here.

%\subsection[Macroscopic quantities and conservation laws]{Macroscopic quantities and conservation laws for the relativistic Boltzmann equation}
\subsection{Macroscopic quantities and conservation laws for rB}
\label{SS:Macroscopicquantities}
In this section, we study macroscopic quantities associated to a particle density function
$F(t,\barx,\barp).$ We also discuss conservation laws that hold whenever $F(t,\barx,\barp)$ is a solution to the relativistic Boltzmann equation \eqref{E:rBintro}. This material is quite standard \cite{sdGsvLwvW1980}, but we include it for convenience. We begin by recalling that the energy-momentum tensor $T_{Boltz}^{\mu \nu}[F]$ and the particle current 
$I_{Boltz}^{\mu}[F]$ for the relativistic Boltzmann equation 
%that is associated to $F$ 
are defined in \eqref{E:TBoltzmanndef}.
%\begin{align} %\label{E:TBoltzmanndef}
%	T_{Boltz}^{\mu \nu}[F] \eqdef c \int_{\mathbb{R}^3} P^{\mu} P^{\nu} F(t,\barx,\barp) \frac{d \barp}{P^{0}} 
%		\qquad (0 \leq \mu,\nu \leq 3).
%\end{align}
Note that the term $\frac{d \barp}{P^{0}}$ on the right-hand side of \eqref{E:TBoltzmanndef} is exactly
the canonical measure defined in \eqref{E:gbarvolumeform}.

We now define the following macroscopic quantities:
\begin{subequations}
\begin{align}
	S^{\mu} & \eqdef - k_B c \int_{\mathbb{R}^3} P^{\mu} F(t,\barx,\barp)  
		\Big \lbrace \mbox{ln}[h^3F(t,\barx,\barp)] - 1 \Big \rbrace \frac{d \barp}{P^{0}},
		\label{E:EntfourflowBoltzmann} 
%		\\
%	I_{Boltz}^{\mu} & \eqdef c \int_{\mathbb{R}^3} P^{\mu} F(t,\barx,\barp) \frac{d \barp}{P^{0}},
%	\label{E:IBoltzmanndef}
	\\
	-c^2 n^2 & \eqdef g_{\kappa \lambda}I_{Boltz}^{\kappa} I_{Boltz}^{\lambda},
	%\label{E:nBoltzmanndef}
	\\
	u^{\mu} & \eqdef n^{-1} I_{Boltz}^{\mu}, 
	\\
	u_{\kappa} u^{\kappa} & = -c^2, 
	\\
	\rho & \eqdef c^{-2} u_{\kappa} u_{\lambda} T_{Boltz}^{\kappa \lambda}, 
	%\notag
	\label{E:rhoBoltzmanndef} 
	\\
	p^{\mu \nu} & \eqdef \Pi_{~\kappa}^{\mu} T_{Boltz}^{\kappa \lambda} \Pi_{~\lambda}^{\nu},
	\\
	\Ent & \eqdef - c^{-2} n^{-1} u_{\kappa} S^{\kappa}. 
	\label{E:EntBoltzmann} 
\end{align}
\end{subequations}
Here, $S$ is the \emph{entropy four flow}, %$I_{Boltz}$ is the \emph{particle current}, '
$n$ is the \emph{proper number density},
$u$ is the \emph{four-velocity}, $\rho$ is the \emph{proper energy density}, $p^{\mu \nu}$ is the
\emph{pressure tensor}, $\Ent$ is the \emph{entropy per particle}, and $\Pi^{\mu \nu} \eqdef c^{-2} u^{\mu} u^{\nu} + (g^{-1})^{\mu \nu}$ is as defined in \eqref{E:Pidef}.

We also decompose the pressure tensor by defining the \emph{pressure} $\pressure$ and the
\emph{viscous pressure tensor} $v^{\mu \nu}:$
\begin{align}
	p^{\mu \nu} & \eqdef \pressure \Pi^{\mu \nu} + v^{\mu \nu},
	\notag
\end{align}
where
\begin{align} 
\label{E:Pressuredef}
	\pressure & \eqdef \frac{1}{3} \Pi_{\kappa \lambda} p^{\kappa \lambda} = \frac{1}{3} \Pi_{\kappa \lambda} T_{Boltz}^{\kappa \lambda}.
\end{align}
We remark that $\Pi^{\kappa \lambda} v_{\kappa \lambda} = 0.$

The following lemma will be useful in verifying numerous identities, especially those of
Proposition \ref{P:Maxwellianidentities}.
\begin{lemma} \label{L:TransformationProperties}
	Let $\Lambda_{\ \nu}^{\mu}$ be a matrix (of real numbers) such that 
	$$
		\Lambda_{\ \mu}^{\kappa} \Lambda_{\ \nu}^{\lambda} g_{\kappa \lambda} = g_{\mu \nu}, \qquad (\mu, \nu = 0,1,2,3),
	$$
	and such that $\mbox{det}(\Lambda) = 1$ (i.e. 
	a proper Lorentz transformation).   Consider the change of coordinates $P \leftarrow \widetilde{P}$ 
	on $T_x M$ from one inertial coordinate system to another induced by $\Lambda:$ $P^{\mu} \eqdef \Lambda_{~\kappa}^{\mu} 
	\widetilde{P}^{\kappa}.$ Then under this change of coordinates, $I^{\mu}, u^{\mu},$ and $S^{\mu}$ transform as the components 
	of a vector, while $T_{Boltz}^{\mu \nu}$ and $p^{\mu \nu}$ transform as the components of a two-contravariant tensor. In 
	an arbitrary coordinate system, these statements 
take the following form:
	\begin{subequations}
	\begin{align}
		I^{\mu} & = \Lambda_{\ \kappa}^{\mu} \widetilde{I}^{\kappa}, && (\mu = 0,1,2,3),
		\label{E:Itransform} 
		\\
		u^{\mu} & = \Lambda_{\ \kappa}^{\mu} \widetilde{u}^{\kappa}, && (\mu = 0,1,2,3),
		\\
		S^{\mu} & = \Lambda_{\ \kappa}^{\mu} \widetilde{S}^{\kappa}, && (\mu = 0,1,2,3),
		\\
		T_{Boltz}^{\mu \nu} & = \Lambda_{\ \kappa}^{\mu} \Lambda_{\ \lambda}^{\nu} \widetilde{T}_{Boltz}^{\kappa \lambda}, 
			&& (\mu, \nu = 0,1,2,3),
		\\
		p^{\mu \nu} & =\Lambda_{\ \kappa}^{\mu} \Lambda_{\ \lambda}^{\nu} \widetilde{p}^{\kappa \lambda}, 
			&& (\mu,\nu = 0,1,2,3).
	\end{align}
	\end{subequations}
\end{lemma}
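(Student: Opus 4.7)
The proof plan rests on two foundational facts: first, that the particle density function $F$, viewed intrinsically as a function on the mass shell $\mathfrak{M}$, is a Lorentz scalar (i.e., $F(x,P) = \widetilde{F}(x,\widetilde{P})$ whenever $P = \Lambda \widetilde{P}$); and second, that the canonical measure $d\mu_{\bar{g}} = d\barp/P^0$ on $\mathfrak{M}_x$ is Lorentz-invariant, which is a geometric consequence of the discussion in Section \ref{SS:massshell}. Both of these should be established (or cited) as preliminary observations before tackling the individual transformation laws.

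Given those two facts, the identities for $I^{\mu}_{Boltz}$ and $T^{\mu \nu}_{Boltz}$ follow essentially immediately from the defining formulas \eqref{E:TBoltzmanndef}. For instance, to verify \eqref{E:Itransform}, I would substitute $P^{\mu} = \Lambda^{\mu}_{\ \kappa} \widetilde{P}^{\kappa}$ inside the integrand, pull the constant matrix $\Lambda^{\mu}_{\ \kappa}$ outside the integral, and use the invariance of $F$ and of $d\barp/P^0$ to recognize the remaining integral as $\widetilde{I}^{\kappa}$. The two-tensor transformation law for $T^{\mu \nu}_{Boltz}$ is essentially the same computation with two factors of $\Lambda$.

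From there I would derive the remaining identities by algebraic manipulations that respect the tensorial structure. Since $g_{\kappa \lambda} \Lambda^{\kappa}_{\ \mu} \Lambda^{\lambda}_{\ \nu} = g_{\mu \nu}$, the scalar $-c^2 n^2 = g_{\kappa \lambda} I^{\kappa}_{Boltz} I^{\lambda}_{Boltz}$ is manifestly Lorentz-invariant, so $n$ is a scalar; dividing the vector identity by $n$ yields the transformation law for $u^{\mu}$. Because the inverse metric $(g^{-1})^{\mu \nu}$ is itself Lorentz-invariant as a two-tensor, the projector $\Pi^{\mu \nu} = c^{-2} u^{\mu} u^{\nu} + (g^{-1})^{\mu \nu}$ inherits the two-tensor transformation law, and then $p^{\mu \nu} = \Pi^{\mu}_{\ \kappa} T_{Boltz}^{\kappa \lambda} \Pi^{\nu}_{\ \lambda}$ transforms correctly by a direct contraction argument. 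The identity for $S^{\mu}$ is analogous to that for $I^{\mu}$, the only new point being that the nonlinear integrand $F\{\ln(h^3 F) - 1\}$ is a scalar because $F$ is.

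The only nontrivial step, and thus the main obstacle, is the careful verification of the invariance of $d\barp/P^0$ under the induced fiberwise change of coordinates on $\mathfrak{M}_x$. Although this is stated in \eqref{E:measureonPx} as the canonical measure associated to the first fundamental form $\bar{g}$, strictly speaking one must check that the pullback $\Phi_* g$ is preserved when one composes with the Lorentz transformation, or equivalently compute the Jacobian of the map $\widetilde{\barp} \mapsto \barp$ and show that it exactly cancels the ratio $P^0/\widetilde{P}^0$. I would handle this either by appealing to the intrinsic geometric definition \eqref{E:gbarvolumeform}, which is coordinate-independent, or by a direct Jacobian computation using $P^{\mu} = \Lambda^{\mu}_{\ \kappa} \widetilde{P}^{\kappa}$ together with the constraint $P^0 = \phi(\barp)$; once this invariance is in hand, the rest of the lemma reduces to bookkeeping.
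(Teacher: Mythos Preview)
Your proposal is correct and matches the paper's approach: the paper likewise proves only \eqref{E:Itransform} in detail, invoking the Lorentz invariance of $d\barp/P^{0}$ via the geometric discussion in Section~\ref{SS:massshell} (exactly as you suggest) together with the change-of-variables formula, and declares the remaining identities to follow similarly. If anything, you are more thorough than the paper in spelling out why the derived quantities $n$, $u^{\mu}$, $\Pi^{\mu\nu}$, and $p^{\mu\nu}$ inherit the correct transformation behavior from $I^{\mu}$ and $T_{Boltz}^{\mu\nu}$.
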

\begin{proof}
	We give only the proof of \eqref{E:Itransform}; the remaining statements follow similarly. Equation \eqref{E:Itransform} is equivalent to the following statement, where we abbreviate $F(t,\barx,\barp) = F(\barp)$
%\begin{align} 
%\label{E:Itransformequivalent}
% \int_{\mathbb{R}^3} P^{\mu} F(\barp) \frac{d \barp}{P^{0}} = 
% 	\Lambda_{\ \kappa}^{\mu} \int_{\bar{\widetilde{P}} \in \mathbb{R}^3} \widetilde{P}^{\kappa} F(\bar{\widetilde{P}}) 
% 	\frac{d \bar{\widetilde{P}}}{\widetilde{P}^0}.
%\end{align}
%	\begin{align} 
%\label{E:Itransformequivalent}
% \int_{\mathbb{R}^3} P^{\mu} F(P^{\mu}) \frac{d \barp}{P^{0}} = 
% 	\Lambda_{\ \kappa}^{\mu} \int_{\bar{\widetilde{P}} \in \mathbb{R}^3} \widetilde{P}^{\kappa} F(\Lambda_{~\kappa}^{\mu} 
%	\widetilde{P}^{\kappa}) 
% 	\frac{d \bar{\widetilde{P}}}{\widetilde{P}^0}.
%\end{align}
\begin{align}
\label{E:Itransformequivalent}
\int_{\mathbb{R}^3} P^{\mu} F(P^1,P^2,P^3) \frac{d \bar{P}}{P^{0}} =
         \int_{\bar{\widetilde{P}} \in \mathbb{R}^3} \Lambda_{\ \kappa}^{\mu} \widetilde{P}^{\kappa}
         F( \Lambda_{\ \alpha}^1 \widetilde{P}^\alpha, \Lambda_{\ \beta}^2 \widetilde{P}^\beta,
   			\Lambda_{\ \gamma}^3 \widetilde{P}^\gamma) \frac{d \bar{\widetilde{P}}}{\widetilde{P}^0}.
\end{align}
Recall that Lorentz transformations preserve the form of the metric $g,$ so that
$g_{\mu \nu} = \widetilde{g}_{\mu \nu} = \mbox{diag}(-1,1,1,1),$ which implies in particular
that $P^{0} = \sqrt{m_0^2 c^2 + |\barp|^2}$ and $\widetilde{P}^0 = \sqrt{m_0^2c^2 + |\bar{\widetilde{P}}|^2}.$ It thus follows from the discussion in Section \ref{SS:massshell}, and in particular equation \eqref{E:measureonPx}, that $\frac{d \barp}{P^{0}} = \frac{d \bar{\widetilde{P}}}{\widetilde{P}^0}.$ Equation \eqref{E:Itransformequivalent} now follows from the standard change of variables formula for integration.
\end{proof}

The conservation laws for the relativistic Boltzmann equation are given in the following lemma.
\begin{lemma} \label{L:Boltzmannconservation}
\cite[Chapter 2]{MR1898707}
	Let $F(t,\barx,\barp)$ be a $C^1$ solution to \eqref{E:rBintro}. Then
	the conservation laws \eqref{E:particlecurrentconservationBoltzmann} hold.
\end{lemma}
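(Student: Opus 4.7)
My plan is to derive both conservation laws from a single principle: multiplying the rB equation by an appropriate collision invariant $\psi(\barp)$, dividing by $P^0$, and integrating over $\barp$. Dividing \eqref{E:rBintro} by $P^0$ and integrating yields, assuming sufficient $\barp$-decay of $F$ to exchange integration with the spacetime derivatives,
\begin{align*}
\partial_\kappa I_{Boltz}^\kappa[F] &= c \int_{\mathbb{R}^3_{\barp}} P^\kappa \partial_\kappa F \, \frac{d\barp}{P^0} = \frac{c}{\varepsilon} \int_{\mathbb{R}^3_{\barp}} \mathcal{C}(F,F) \, \frac{d\barp}{P^0},\\
\partial_\kappa T_{Boltz}^{\mu\kappa}[F] &= c \int_{\mathbb{R}^3_{\barp}} P^\mu P^\kappa \partial_\kappa F \, \frac{d\barp}{P^0} = \frac{c}{\varepsilon} \int_{\mathbb{R}^3_{\barp}} P^\mu \, \mathcal{C}(F,F) \, \frac{d\barp}{P^0}.
\end{align*}
Thus both assertions reduce to the single weak identity
\begin{equation*}
\int_{\mathbb{R}^3_{\barp}} \psi(\barp) \, \mathcal{C}(F,F) \, \frac{d\barp}{P^0} = 0 \qquad \text{for } \psi \in \{1, P^0, P^1, P^2, P^3\}.
\end{equation*}

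The main step is the standard symmetrization of the collision integral. Writing $\mathcal{C}(F,F)/P^0$ in the form \eqref{collisionCM} and using (i) the invariance of $v_\o$, $\sigma(\varrho,\vartheta)$, and $F(\barp')F(\barq')-F(\barp)F(\barq)$ under the exchange $\barp \leftrightarrow \barq$ (noting that the post-collisional pair defined by \eqref{postCOLLvelCMsec2} is simply permuted), together with (ii) the involutive nature of the pre-to-post collisional map, I arrive at
\begin{equation*}
\int \psi \, \frac{\mathcal{C}(F,F)}{P^0} \, d\barp \;=\; \frac{1}{4} \int\!\!\int\!\!\int \bigl[\psi(\barp) + \psi(\barq) - \psi(\barp') - \psi(\barq')\bigr] v_{\o} \, \sigma(\varrho,\vartheta) \bigl[F(\barp)F(\barq) - F(\barp')F(\barq')\bigr] d\barp \, d\barq \, d\omega.
\end{equation*}
The collisional conservation identity \eqref{collisionalCONSERVATION} then guarantees that the bracket $[\psi + \psi_* - \psi' - \psi_*']$ vanishes identically for $\psi \equiv 1$ and for $\psi = P^\mu$, $\mu = 0,1,2,3$, which is exactly what is required.

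The main technical obstacle is justifying step (ii), namely the symmetry under the exchange $(\barp,\barq) \leftrightarrow (\barp',\barq')$. The cleanest route is to pass temporarily back to the manifestly Lorentz-invariant expression involving the transition rate \eqref{transitionrate}, in which the integration measure and $W(\barp,\barq\,|\,\barp',\barq')$ are both symmetric under the full swap, after which integrating out the four-dimensional $\delta$ produces the expression \eqref{collisionCM} (see \cite{strainPHD,strainCOOR,sdGsvLwvW1980}). Alternatively, one can verify the change-of-variables formula directly in the center-of-momentum parametrization by showing that the map $(\barp,\barq,\omega) \mapsto (\barp',\barq',k)$, with $k = k(\barp,\barq)$ as in Section \ref{SS:alternateexpression}, has unit Jacobian with respect to the measure $v_\o\, d\barp\, d\barq\, d\omega$. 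A secondary, purely analytic point is legitimizing the interchange of $\partial_\kappa$ with $\int d\barp$; this follows from the $C^1$ regularity of $F$ in $(t,\barx)$ and the decay in $\barp$ that is implicit in the requirement that the defining integrals for $I_{Boltz}^\mu[F]$ and $T_{Boltz}^{\mu\nu}[F]$ be finite.
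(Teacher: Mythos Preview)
The paper does not give its own proof of this lemma; it simply cites \cite[Chapter 2]{MR1898707}. Your argument is the standard one found in that reference (and in \cite{sdGsvLwvW1980}): multiply the rB equation by a collision invariant, integrate over $\barp$ with the invariant measure $d\barp/P^0$, and symmetrize the collision integral using the $(\barp,\barq)$ and pre/post exchanges together with \eqref{collisionalCONSERVATION}. So your proposal is correct and matches what the cited source does.
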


%\begin{remark}
	Note that  \eqref{E:particlecurrentconservationBoltzmann}
	corresponds to the fluid conservation laws \eqref{E:nuconservationlawintro}.
%\end{remark}

\subsection{Macroscopic quantities for a Maxwellian $\mathcal{M}= \mathcal{M}(n,\Temp,u;\barp)$} 
\label{SS:Maxwellainmacroscopic} 

In this section, we prove the following proposition:

\begin{proposition} \label{P:Maxwellianidentities}
	Let $n(t,\barx) > 0$, $\Temp(t,\barx) > 0$ be positive functions on $M,$ and let $u(t,\barx)$ 
	be a future-directed vectorfield on $M$ satisfying
	\eqref{E:unormalized}. Consider the corresponding local relativistic Maxwellian 
	$\mathcal{M}(n,\Temp,u;\barp)$ defined in \eqref{E:Maxwelliandef} with \eqref{zdef}.
	Then the following relations hold for the quantities defined in \eqref{E:EntfourflowBoltzmann} - \eqref{E:Pressuredef}:
	\begin{subequations}
	\begin{align}
		I^{\mu}[\mathcal{M}] & = n[\mathcal{M}] u^{\mu}, \quad (\mu = 0,1,2,3),
		 \label{E:InuMaxwellianidentity} 
		 \\
		\pressure [\mathcal{M}] & = k_B n[\mathcal{M}] \Temp
			= m_0 c^2 \frac{n[\mathcal{M}]}{z[\mathcal{M}]}, \quad \label{E:PnTequationofstateMaxwellian} 
			\\
		\rho[\mathcal{M}] &= 3 \pressure[\mathcal{M}] + m_0c^2n[\mathcal{M}] \frac{K_1(z[\mathcal{M}])}{K_2(z[\mathcal{M}])}   \label{E:rhoMaxwellian} \\
		& = m_0c^2n[\mathcal{M}] \frac{K_3(z[\mathcal{M}])}{K_2(z[\mathcal{M}])} - 
			\overbrace{k_B n[\mathcal{M}] \Temp}^{\pressure[\mathcal{M}]},  \nonumber
			\\
		T_{Boltz}^{\mu \nu}[\mathcal{M}] & = T_{fluid}^{\mu \nu}[\mathcal{M}], 
			\quad (\mu, \nu = 0,1,2,3), \label{E:TBoltzMaxwellian} 
			\\
		n[\mathcal{M}] & = 4\pi e^4 m_0^3 c^3 h^{-3} \exp\Big(\frac{-\Ent}{k_B} \Big) 
			\frac{K_2(z[\mathcal{M}])}{z[\mathcal{M}]}\exp\Big(z[\mathcal{M}] 
			\frac{K_1(z[\mathcal{M}])}{K_2(z[\mathcal{M}])} \Big)  \label{E:nzEntrelation} \\
			& = 4\pi m_0^3 c^3 h^{-3} \exp\Big(\frac{-\Ent}{k_B} \Big) 
				\frac{K_2(z[\mathcal{M}])}{z[\mathcal{M}]}
				\exp\Big(z[\mathcal{M}] \frac{K_3(z[\mathcal{M}])}{K_2(z[\mathcal{M}])} \Big). \notag 
	\end{align}
	\end{subequations}
\end{proposition}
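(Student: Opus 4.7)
The proof plan is to exploit the Lorentz covariance established in Lemma \ref{L:TransformationProperties}. At a fixed spacetime point $(t,\barx)$, I would apply a proper Lorentz transformation $\Lambda$ that sends the coordinate frame to the \emph{rest frame of the fluid}, in which the transformed four-velocity is $\widetilde{u}^\mu = (c,0,0,0)$. Since the mass-shell measure $d\barp/P^0$ is Lorentz-invariant (see \eqref{E:measureonPx}) and the Maxwellian \eqref{E:Maxwelliandef} depends on $P$ only through the Lorentz scalar $u_\kappa P^\kappa$, the Maxwellian in the rest-frame coordinates becomes the isotropic expression
\begin{equation*}
\widetilde{\mathcal{M}} = \frac{nz}{4\pi m_0^3 c^3 K_2(z)} \exp\!\left(-\frac{z \widetilde{P}^0}{m_0 c}\right).
\end{equation*}
All macroscopic quantities \eqref{E:EntfourflowBoltzmann}--\eqref{E:Pressuredef} are then computed in this rest frame, where they reduce to one-dimensional radial integrals, and subsequently transported back to the original frame via the tensorial laws of Lemma \ref{L:TransformationProperties}.

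Next, I would reduce every rest-frame integral to a Bessel function by the hyperbolic substitution $|\bar{\widetilde{P}}| = m_0 c \sinh\chi$, so that $\widetilde{P}^0 = m_0 c \cosh\chi$ and $d\bar{\widetilde{P}} = 4\pi m_0^3 c^3 \sinh^2\chi \cosh\chi \, d\chi$ after performing the angular integration. Combined with the integral representation
\begin{equation*}
K_n(z) = \int_0^\infty e^{-z\cosh\chi}\cosh(n\chi)\,d\chi
\end{equation*}
and elementary product-to-sum identities for hyperbolic functions, this produces the normalization $\int \widetilde{\mathcal{M}}\,d\bar{\widetilde{P}}/\widetilde{P}^0 = n/c$ (verifying $\widetilde{I}^0=cn$ and by symmetry $\widetilde{I}^j=0$, hence \eqref{E:InuMaxwellianidentity}), together with the evaluation of $T_{Boltz}^{00}[\widetilde{\mathcal{M}}]$ and $\sum_{a=1}^3 T_{Boltz}^{aa}[\widetilde{\mathcal{M}}]$ in terms of the ratios $K_1(z)/K_2(z)$ and $K_3(z)/K_2(z)$. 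The rest-frame energy and trace identifications then yield \eqref{E:PnTequationofstateMaxwellian} and \eqref{E:rhoMaxwellian}; the alternative form uses the Bessel recursion $K_3(z) = K_1(z) + (4/z)K_2(z)$. Since $T_{Boltz}^{\mu\nu}[\widetilde{\mathcal{M}}]$ is diagonal with spatial components equal to the rest-frame pressure, it has the same algebraic form $c^{-2}(\rho+p)\widetilde{u}^\mu \widetilde{u}^\nu + p \widetilde{g}^{\mu\nu}$ as $T_{fluid}^{\mu\nu}$, and Lorentz covariance lifts this to the general frame, giving \eqref{E:TBoltzMaxwellian}.

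For the entropy identity \eqref{E:nzEntrelation}, I would insert the explicit form of $\ln[h^3 \mathcal{M}]$ into \eqref{E:EntfourflowBoltzmann} to split
\begin{equation*}
S^\mu[\mathcal{M}] = -k_B \left\{\ln\!\left(\frac{h^3 n z}{4\pi m_0^3 c^3 K_2(z)}\right) - 1\right\} I_{Boltz}^\mu[\mathcal{M}] - \frac{k_B z}{m_0 c^2}\, u_\kappa T_{Boltz}^{\mu\kappa}[\mathcal{M}].
\end{equation*}
Contracting with $-c^{-2}n^{-1}u_\mu$ and using $u_\kappa u^\kappa=-c^2$ together with the already-derived identities \eqref{E:InuMaxwellianidentity} and \eqref{E:rhoMaxwellian} reduces $\Ent$ to an explicit algebraic function of $n$ and $z$. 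Solving that equation for $n$ gives exactly \eqref{E:nzEntrelation}, with the alternative form again coming from $K_3 = K_1 + (4/z)K_2$.

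The main obstacle I anticipate is bookkeeping rather than conceptual: keeping the Bessel recursion identities straight when simplifying $u_\kappa u_\lambda T_{Boltz}^{\kappa\lambda}$ and $\Pi_{\kappa\lambda} T_{Boltz}^{\kappa\lambda}$ in the rest frame, and correctly tracking the sign conventions (recall $u_\kappa P^\kappa \le -m_0 c^2 < 0$) when combining the two terms in the entropy decomposition. Everything else is a sequence of standard rest-frame Bessel-integral computations together with one application of Lorentz covariance.
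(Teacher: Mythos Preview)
Your proposal is correct and follows essentially the same strategy as the paper: reduce to the rest frame of $u$ via Lemma \ref{L:TransformationProperties}, evaluate the resulting isotropic momentum integrals in terms of Bessel functions, and invoke the recursion \eqref{E:Besselrecursion}. The only notable difference is cosmetic---the paper uses the substitution $\lambda = zP^0/(m_0 c)$ together with the representations \eqref{E:Besseldef}--\eqref{E:Besselalternatedef} rather than your hyperbolic parametrization, and it computes $\Ent[\mathcal{M}]$ by direct rest-frame integration rather than via your decomposition of $S^\mu$ in terms of $I_{Boltz}^\mu$ and $u_\kappa T_{Boltz}^{\mu\kappa}$; your route for the entropy is in fact a bit cleaner since it recycles the already-established identities \eqref{E:InuMaxwellianidentity} and \eqref{E:rhoMaxwellian}.
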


By $T_{fluid}^{\mu \nu}[\mathcal{M}],$ we mean the energy-momentum tensor that results from inserting $\pressure [\mathcal{M}]$ and $\rho [\mathcal{M}]$ into the expression on the right-hand side of \eqref{E:Tfluiddef}.

\begin{proof}
	It is  well-known  that since $u(x)$ is a future-directed vector satisfying \eqref{E:unormalized}, there exists 
	an inertial coordinate system in which $(u^0,u^1,u^2,u^3) = (c,0,0,0)$ (at the spacetime point $x$); such a frame is known as 
	a ``rest frame" for $u.$
	By Lemma \ref{L:TransformationProperties}, it suffices to check that \eqref{E:InuMaxwellianidentity} - \eqref{E:nzEntrelation}
	hold in such a rest frame. Using definition \eqref{E:TBoltzmanndef}, it follows that $I^{\mu}$ can be expressed
	in a rest frame for $u$ as
	\begin{align} \label{E:Iproofexpression}
		I^{\mu}[\mathcal{M}] = c n \frac{z}{4 \pi m_0^3 c^3 K_2(z)} 
			\int_{\mathbb{R}^3} P^{\mu} 
			\exp\Big(\frac{-z P^{0}}{m_0 c}\Big) \frac{d \barp}{P^{0}}.
	\end{align}
	By symmetry, it follows that $I^{\mu}[\mathcal{M}]$ is proportional to $(1,0,0,0),$ and therefore also to $u.$ Let us denote 
	the proportionality constant by $A,$ so that in \emph{any} inertial coordinate system, we have that
	\begin{align} 
		I^{\mu}[\mathcal{M}] = Au^{\mu}.
		\notag
	\end{align}
	We thus have that $A = I^0/u^0.$ Furthermore, it follows from \eqref{E:Iproofexpression} that in a rest frame for $u,$ we 
	have that
	\begin{align} \label{E:nBoltzmannintegral}
		A = n \frac{z}{4 \pi m_0^3 c^3 K_2(z)} 
			\int_{\mathbb{R}^3} \exp\Big(\frac{-z P^{0}}{m_0 c}\Big) \, d \barp.
	\end{align}
	We carry out the integration in \eqref{E:nBoltzmannintegral} using spherical coordinates for
	$\barp,$ and making the change of variables $\lambda = z P^{0}/(m_0 c)$.  This implies that 
	$|\barp| = m_0 cz^{-1}(\lambda^2 - z^2)^{1/2},$ 
	$d |\barp| = m_0c z^{-1} \lambda (\lambda^2 - z^2)^{-1/2},$ and we obtain
	\begin{align} \label{E:nBoltzmannintegrated}
		A = n \frac{z}{4 \pi m_0^3 c^3 K_2(z)} 
			 4 \pi m_0^3 c^3 z^{-3} \int_{\lambda = z}^{\infty} \lambda e^{-\lambda}\big(\lambda^2 - z^2 \big)^{1/2} 
			\, d \lambda.
	\end{align} 
	From the Bessel function identity \eqref{E:Besselalternatedef} in the case $j=2,$ it follows that 
$A = n$,
	which completes the proof of \eqref{E:InuMaxwellianidentity}.

	To prove \eqref{E:PnTequationofstateMaxwellian}, we note that in a rest frame for $u,$ 
	it follows that $P^{\kappa} P^{\lambda} \Pi_{\kappa \lambda} = |\bar{P}|^2,$ 
	where $\Pi_{\mu \nu}$ is defined in \eqref{E:Pidef}.
	Inserting this formula into definition \eqref{E:Pressuredef}, using definition \eqref{E:TBoltzmanndef}, integrating with spherical 
	coordinates for $\barp,$ and using the integration variable $\lambda$ as above,  
	we have that
	\begin{align} 
	\label{E:PMaxwellianintegral}
		\pressure[\mathcal{M}] & = \frac{1}{3}c n \frac{z}{4 \pi m_0^3 c^3 K_2(z)} 
			\int_{\mathbb{R}^3} |\barp|^2 \exp\Big(\frac{-z P^{0}}{m_0 c}\Big) \frac{d \barp}{P^{0}} \\
		& = \frac{1}{3}c n \frac{z}{4 \pi m_0^3 c^3 K_2(z)}  4 \pi m_0^4 c^4 z^{-4} \int_{\lambda = z}^{\infty}
			e^{-\lambda}\big(\lambda^2 - z^2 \big)^{3/2} \, d \lambda.
		\notag
		\end{align}
	Referring to definition \eqref{E:Besseldef}, it follows that $\pressure[\mathcal{M}] =  n \frac{m_0 c^2}{z},$
	which proves \eqref{E:PnTequationofstateMaxwellian}. 

	To prove \eqref{E:rhoMaxwellian}, we note that in a rest frame for $u,$  
	$u_{\kappa} u_{\lambda} P^{\kappa} P^{\lambda} = c^2 (P^{0})^2.$
	Inserting this formula into definition \eqref{E:rhoBoltzmanndef}, carrying out the integration in spherical $\barp$ 
	coordinates, using the change of variables $\lambda$ as above,
	referring to definition \eqref{E:Besseldef}, and using the recursion formula \eqref{E:Besselrecursion} in the
	case $j=2,$ we have that
	\begin{align}  \label{E:rhointegral}
		\rho[\mathcal{M}] & = c n \frac{z}{4 \pi m_0^3 c^3 K_2(z)}
			\int_{\mathbb{R}^3} P^{0} \exp\Big(\frac{-z P^{0}}{m_0 c}\Big) d \barp 
			\\
		& = c n \frac{z}{4 \pi m_0^3 c^3 K_2(z)}  4 \pi m_0^4 c^4 z^{-4}
			\int_{\lambda = z}^{\lambda = \infty} e^{- \lambda} \lambda^2 (\lambda^2 - z^2)^{1/2} d \lambda 
			\notag
			 \\
		& = c n \frac{z}{4 \pi m_0^3 c^3 K_2(z)}  4 \pi m_0^4 c^4 z^{-4}
			\int_{\lambda = z}^{\lambda = \infty} e^{- \lambda} (\lambda^2 - z^2)^{3/2} d \lambda 
			\notag
			\\
		& \ \ \ + c n \frac{z}{4 \pi m_0^3 c^3 K_2(z)}  4 \pi m_0^4 c^4 z^{-2}
			\int_{\lambda = z}^{\lambda = \infty} e^{- \lambda} (\lambda^2 - z^2)^{1/2} d \lambda 
			\notag
			 \\ 
		& = m_0 c^2 n  \frac{z}{K_2(z)} \Big\lbrace \frac{3 K_2(z)}{z^2} + \frac{K_1(z)}{z} 
			\Big\rbrace \notag 
			\\
		& =  3 n k_B \Temp + m_0 c^2 n\frac{K_1(z)}{K_2(z)} = m_0c^2 n[\mathcal{M}]  \frac{K_3(z)}{K_2(z)} - k_B 
			n[\mathcal{M}]\Temp.
			\notag
	\end{align}	 
	Combining this identity with \eqref{E:PnTequationofstateMaxwellian}, we deduce \eqref{E:rhoMaxwellian}.
	
	To prove \eqref{E:TBoltzMaxwellian}, we 
	first note that in a rest frame for $u,$ we have that
	\begin{align}
		T_{fluid}^{\mu \nu}[\mathcal{M}] = 
			\mbox{diag}(\rho[\mathcal{M}],\pressure[\mathcal{M}],\pressure[\mathcal{M}],\pressure[\mathcal{M}]),
			\notag
	\end{align}
	and
	\begin{align} \label{E:TBoltzMaxwellianrestframe}
		T_{Boltz}^{\mu \nu}[\mathcal{M}] = c n \frac{z}{4 \pi m_0^3 c^3 K_2(z)}
			\int_{\mathbb{R}^3} P^{\mu} P^{\nu} \exp\Big(\frac{-z P^{0}}{m_0 c}\Big) \frac{d \barp}{P^{0}}.
	\end{align}
	The fact that $\mu \neq \nu \implies T_{Boltz}^{\mu \nu}[\mathcal{M}]= 0$ follows from symmetry. The fact
	that $T_{Boltz}^{00}[\mathcal{M}]= \rho[\mathcal{M}]$ follows directly from comparing the integral expressions 
	\eqref{E:rhointegral} and \eqref{E:TBoltzMaxwellianrestframe}, while the fact that $T_{Boltz}^{jj}[\mathcal{M}]= 
	\pressure[\mathcal{M}]$ (there is no summation in $j$ here) follows 
	from comparing the integral expressions \eqref{E:PMaxwellianintegral} and \eqref{E:TBoltzMaxwellianrestframe} using symmetry.

	To prove \eqref{E:nzEntrelation}, we  notice that
	 in a rest frame for $u,$ we have that
	$u_{\kappa} P^{\kappa} = - c P^{0}.$ Inserting this formula into definitions
	\eqref{E:EntfourflowBoltzmann} and \eqref{E:EntBoltzmann}, and evaluating
	the two integrals that arise as in \eqref{E:nBoltzmannintegrated} and
	\eqref{E:rhointegral}, we have that	
	\begin{align}
			\Ent[\mathcal{M}]  & = (- k_B n^{-1}) n \frac{z}{4 \pi m_0^3 c^3 K_2(z)}
			\int_{\mathbb{R}^3}  \exp\Big(\frac{-z P^{0}}{m_0 c}\Big) 
				\notag \\
			& \ \ \ \times
			\Big \lbrace \frac{-z P^{0}}{m_0 c} + \mbox{ln}\Big[h^3 n \frac{z}{4 \pi m_0^3 c^3 K_2(z)}\Big] - 1 \Big \rbrace 
			 d \barp \notag \\
		 & = k_B \frac{z}{4 \pi m_0^3 c^3 K_2(z)}  \frac{z}{m_0c} 
			\int_{\mathbb{R}^3} \exp\Big(\frac{-z P^{0}}{m_0 c}\Big) P^{0} d \barp 
		 \notag \\
		 & \ \ \ - k_B \frac{z}{4 \pi m_0^3 c^3 K_2(z)} 
			 \Big \lbrace \mbox{ln}\Big[h^3 n \frac{z}{4 \pi m_0^3 c^3 K_2(z)}\Big] - 1 \Big \rbrace
			\int_{\mathbb{R}^3} \exp\Big(\frac{-z P^{0}}{m_0 c}\Big) d \barp
			 \notag \\
		 & =  k_B  \frac{z}{4 \pi m_0^3 c^3 K_2(z)}  \frac{z}{m_0c}
			 4 \pi m_0^4 c^4 \Big\lbrace \frac{3 K_2(z)}{z^2} + \frac{K_1(z)}{z} 
			\Big\rbrace
			 \notag \\
		 & \ \ \ - k_B \frac{z}{4 \pi m_0^3 c^3 K_2(z)} 
			 \Big \lbrace \mbox{ln}\Big[h^3 n \frac{z}{4 \pi m_0^3 c^3 K_2(z)}\Big] - 1 \Big \rbrace
			 4 \pi m_0^3 c^3 \frac{K_2(z)}{z} 
			 \notag \\
		 & = k_B \Big\lbrace 3 + z \frac{K_1(z)}{K_2(z)} \Big\rbrace 
			- k_B \Big\lbrace \mbox{ln}\Big[h^3 n \frac{z}{4 \pi m_0^3 c^3 K_2(z)}\Big] - 1 \Big \rbrace. 
			\label{E:lastlinenzEntrelationproof}
		\end{align}
	\eqref{E:nzEntrelation} now follows from \eqref{E:lastlinenzEntrelationproof} and simple algebraic manipulation
	(solve for $n$).
	\end{proof}

The next lemma was used in Section \ref{S:rE} to derive an equivalent version of the rE system; i.e., equations \eqref{E:rEentropy} - \eqref{E:rEq}. More specifically, only equation \eqref{E:rhoPnMaxwellrelation} was used. However, as an aside, we also discuss the fundamental thermodynamic relation \eqref{E:nTrhoEntMaxwellrelation} (see \cite{dC2007b}), which can, in consideration of the positivity of $n$ and $\Temp,$ be used to show that $\Ent$ can be written as a smooth function of $n, \rho.$

\begin{proposition} \label{L:Maxwellrelations}
	Assume that the functional relations \eqref{E:PnTequationofstateMaxwellian}, \eqref{E:rhoMaxwellian}, and 
	\eqref{E:nzEntrelation} hold for the variables $\pressure, \rho, n, \Temp,$ and  $\Ent.$ Then the  additional 
	relations also hold:
	\begin{align}
		n\Temp & = \left. \frac{\partial \rho}{\partial \Ent} \right|_{n}, \label{E:nTrhoEntMaxwellrelation} 
		\\
		\rho + \pressure & = n \left. \frac{\partial \rho}{\partial n} \right|_{\Ent}. \label{E:rhoPnMaxwellrelation}
\end{align}
\end{proposition}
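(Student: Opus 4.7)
The plan is to treat $(n,z)$ as independent state-space variables (with $z = m_0 c^2/(k_B\Temp)$), use the three given relations to write $\rho$, $\pressure$, and $\Ent$ explicitly as functions of $(n,z)$, compute the partial derivatives in these variables, and then convert to the desired partial derivatives via the standard change-of-variables formulas
\[
\left.\frac{\partial\rho}{\partial\Ent}\right|_n
= \frac{\partial_z\rho|_n}{\partial_z\Ent|_n},\qquad
\left.\frac{\partial\rho}{\partial n}\right|_\Ent
= \partial_n\rho|_z - \frac{\partial_n\Ent|_z}{\partial_z\Ent|_n}\,\partial_z\rho|_n.
\]
Write $h(z) \eqdef K_1(z)/K_2(z)$ for brevity, so that by \eqref{E:PnTequationofstateMaxwellian} and \eqref{E:rhoMaxwellian},
\[
\pressure = \frac{m_0 c^2 n}{z},\qquad
\rho = m_0 c^2 n\!\left(h(z) + \tfrac{3}{z}\right),
\]
and by \eqref{E:nzEntrelation} after taking the logarithm,
\[
\Ent/k_B = C_0 - \ln n + \ln\!\big(K_2(z)/z\big) + z\,h(z),
\]
with $C_0$ a constant. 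Then $\partial_n\rho|_z = m_0c^2(h+3/z)$, $\partial_z\rho|_n = m_0c^2 n\,(h'-3/z^2)$, and $\partial_n\Ent|_z = -k_B/n$, so only $\partial_z\Ent|_n$ requires real work.

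The main technical step is to evaluate $\partial_z\Ent|_n$ using Bessel identities. From the recursion $K_n'(z) = -K_{n-1}(z) - \frac{n}{z}K_n(z)$ together with $K_0 = K_2 - \frac{2}{z}K_1$, I will first establish the two key identities
\[
\frac{d}{dz}\ln\!\big(K_2(z)/z\big) = -h(z) - \frac{3}{z}, \qquad
h'(z) = -1 + \frac{3\,h(z)}{z} + h(z)^2.
\]
Substituting these into the logarithmic formula for $\Ent$ yields
\[
\partial_z\Ent|_n = k_B\!\left(-\tfrac{3}{z} + h + z h'\right) = k_B\!\left(-z + \tfrac{3 h}{z}\cdot z + z h^2 - \tfrac{3}{z}\right) = k_B\, z\!\left(-1 + \tfrac{3h}{z} + h^2 - \tfrac{3}{z^2}\right),
\]
which is exactly $z$ times the analogous factor inside $\partial_z\rho|_n/(m_0c^2 n)$.

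With this cancellation in hand, the first identity falls out immediately: the ratio $\partial_z\rho|_n/\partial_z\Ent|_n$ simplifies to $m_0c^2 n/(k_B z) = n\Temp$, which is \eqref{E:nTrhoEntMaxwellrelation}. For the second identity, note that $n\,\partial_n\rho|_z = \rho$ directly from the formula for $\rho$, while
\[
-\,n\,\frac{\partial_n\Ent|_z}{\partial_z\Ent|_n}\,\partial_z\rho|_n
= k_B\,\frac{\partial_z\rho|_n}{\partial_z\Ent|_n} = k_B\cdot n\Temp = \pressure
\]
by \eqref{E:PnTequationofstateMaxwellian} and the identity just proved. Adding the two contributions gives $n\,\partial_n\rho|_\Ent = \rho+\pressure$, which is \eqref{E:rhoPnMaxwellrelation}. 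The main obstacle is the Bessel-function bookkeeping — specifically deriving the clean form $h' = -1 + 3h/z + h^2$ — but once that identity is in place, the rest is a direct algebraic verification with no further use of the special structure of $K_1,K_2$.
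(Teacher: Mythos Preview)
Your approach is correct and essentially the same as the paper's: both treat $(n,z)$ as the independent variables, compute $\partial_z\rho|_n$ and $\partial_z\Ent|_n$ via Bessel identities, and then convert to the desired $(n,\Ent)$-derivatives by the chain rule (the paper uses the $K_3/K_2$ form of $\rho$ and the identity \eqref{E:Besseldifferentialidentity} where you use $h=K_1/K_2$ and its Riccati-type relation $h'=-1+3h/z+h^2$, but these are equivalent computations). One small slip: your intermediate expression $\partial_z\Ent|_n = k_B\bigl(-\tfrac{3}{z} + h + zh'\bigr)$ should read $k_B\bigl(-\tfrac{3}{z} + zh'\bigr)$, since the $-h$ from $\partial_z\ln(K_2/z)$ cancels the $+h$ from $\partial_z(zh)$; your next two displayed expressions and the final conclusions are nonetheless correct.
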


\begin{proof}
	To ease the notation, 
	we use the notation \eqref{zdef} and abbreviate 
	$K_j = K_j(z),$ $K_j' = \frac{d}{d z} K_j(z),$ where $K_j(z)$ is the Bessel function
	defined in \eqref{E:Besseldef}.
	To begin the proof of \eqref{E:nTrhoEntMaxwellrelation}, we first note that by the chain rule, it follows that
	\begin{align} \label{E:drhodEntchainrule}
		\left. \frac{\partial \rho}{\partial \Ent} \right|_{n} = \left. \frac{\partial \rho}{\partial z} \right|_{n} 
		\left. \frac{\partial z}{\partial \Ent} \right|_{n}.
	\end{align}
	We claim that equation \eqref{E:drhodEntchainrule} leads to the following identities:
	\begin{align} 
		\label{E:drhodEntidentities}
		\left. \frac{\partial \rho}{\partial \Ent} \right|_{n} & = \frac{nm_0c^2}{k_B z}\underbrace{\Big \lbrace
			\frac{K_3' - K_2^{-1}K_2'K_3 + z^{-2}K_2}{z^{-1}K_2' - z^{-2}K_2
				+ K_3' + z^{-1}K_3 - K_2^{-1}K_2'K_3} \Big \rbrace}_{= 1} 
				\\
			& = n\Temp, \notag
	\end{align}
	which completes 
	the proof of \eqref{E:nTrhoEntMaxwellrelation}.

	To see that \eqref{E:drhodEntidentities} holds, we note that differentiating the last equality in \eqref{E:rhoMaxwellian} 
	leads to the relation
	\begin{align} \label{E:drhodznconstant}
		\left. \frac{\partial \rho}{\partial z} \right|_{n} = m_0c^2 n \Big(\frac{K_3'}{K_2} - \frac{K_2'K_3}{K_2^2} 
		+ \frac{1}{z^2} \Big),
	\end{align}
	while differentiating each side of \eqref{E:nzEntrelation} with respect to $\Ent$ (while $n$ is held constant) leads to
	the following identity:
	\begin{align} \label{E:dzdEntnconstant}
		0 = \left. \frac{\partial z}{\partial \Ent} \right|_{n} \Big[ z^{-1}K_2' - z^{-2}K_2
				+ K_3' + z^{-1}K_3 - K_2^{-1}K_2'K_3 \Big] - k_B^{-1} z^{-1} K_2(z).
	\end{align}
	Inserting \eqref{E:drhodznconstant} and \eqref{E:dzdEntnconstant} into the left-hand
	side of \eqref{E:drhodEntidentities} implies the first equality. The second equality in
	\eqref{E:drhodEntidentities} follows from \eqref{E:Besseldifferentialidentity} in the case $j=2,$ 
	which implies that the term above
	the under-braces is equal to $1,$ and from the definition of $z.$
	
	The proof of \eqref{E:rhoPnMaxwellrelation} follows similarly using the chain rule identity \\
	$\left. \frac{\partial \rho}{\partial n} \right|_{\Ent} = \left. \frac{\partial \rho}{\partial n} \right|_{z} 
		+ \left. \frac{\partial \rho}{\partial z} \right|_{n}
		\left. \frac{\partial z}{\partial n} \right|_{\Ent},$ and we omit the calculations.
\end{proof}

\subsection{The invertibility of the maps $\mathfrak{H}(n,z)$ and $\mathfrak{P}(n,z)$} \label{SS:SolveforTemp}

In this short section, we state and prove Lemma \ref{L:invertiblemaps}, which addresses the issue of solving for 
$(n,\Temp)$ in terms of $(\Ent, \pressure).$ This lemma rigorously shows that Conjecture \ref{C:Tempisgood} is true outside of a compact set of $\Temp$ values. Furthermore, at the end of the section, we provide Figure \ref{fig:partialppartialz}, which is our numerical evidence for the validity of the conjecture. For the purposes of avoiding repetition, during the proof of Lemma \ref{L:invertiblemaps}, it is convenient to use notation that is defined below in the proof of Lemma \ref{L:speedofsound}.
However, logically speaking, the proof of Lemma \ref{L:invertiblemaps} comes before the proof of Lemma \ref{L:speedofsound}.

\begin{lemma} \label{L:invertiblemaps}
	Consider the smooth maps $\Ent = \mathfrak{H}(n,z)$ and $\pressure = \mathfrak{P}(n,z)$ defined
	\eqref{E:changeofstatespacevariablesH} and \eqref{E:changeofstatespacevariablesP} respectively.  
	Then the map $(n,z) \rightarrow \big(\mathfrak{H}(n,z), \mathfrak{P}(n,z)\big)$ is invertible with smooth inverse if 
	$0 < z \leq 1/10$ or $z \geq 70.$
\end{lemma}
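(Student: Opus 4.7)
The plan is to apply the implicit function theorem via an explicit computation of the Jacobian of $\Phi(n,z) \eqdef (\mathfrak{H}(n,z), \mathfrak{P}(n,z))$. Because $\mathfrak{P}(n,z) = m_0 c^2 n/z$ is a bijection $n \mapsto p$ for every fixed $z > 0$ (with inverse $n = pz/(m_0 c^2)$), global invertibility of $\Phi$ on each of the connected regions $(0,\infty) \times (0,1/10]$ and $(0,\infty) \times [70,\infty)$ reduces to showing that, for every fixed $p > 0$, the function $\widetilde{\mathfrak{H}}(z) \eqdef \mathfrak{H}(pz/(m_0 c^2), z)$ is strictly monotone in $z$ on each interval.

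Computing $\partial_z \widetilde{\mathfrak{H}}$ using the Bessel recursions $K_2'(z) = -K_1(z) - 2 K_2(z)/z$ and $K_1'(z) = -K_2(z) + K_1(z)/z$, one finds after simplification
\begin{equation} \notag
\frac{z}{k_B} \partial_z \widetilde{\mathfrak{H}}(z) = 3 R(z) + z R(z)^2 - z - \frac{4}{z}, \qquad R(z) \eqdef \frac{K_1(z)}{K_2(z)},
\end{equation}
so the monotonicity reduces precisely to the inequality \eqref{E:pdecreasesinzinequality}. (This expression is independent of $p$, which is consistent with the fact that the conjectured negativity of $\partial_z p |_\Ent$ is equivalent to the invertibility claim.) I would therefore devote the remainder of the proof to verifying \eqref{E:pdecreasesinzinequality} for $z \leq 1/10$ and $z \geq 70$.

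For $z \geq 70$, I would invoke the classical large-argument asymptotic expansion for $K_\nu(z)$ with explicit remainder bounds (see, e.g., Olver), yielding $R(z) = 1 - \tfrac{3}{2z} + \tfrac{15}{8z^2} + O(z^{-3})$ with a quantitative constant in the remainder. Substituting this into $3R + zR^2 - z - 4/z$ and tracking cancellations gives a leading behavior of $-5/(2z)$ plus an $O(z^{-2})$ remainder. The key step is to pick out enough terms of the expansion so that the effective bound on the remainder, evaluated at $z = 70$, is dominated by $5/(2 \cdot 70)$, thereby confirming strict negativity. For $0 < z \leq 1/10$, I would use the small-argument behavior $K_1(z) = 1/z + O(z \log z)$ and $K_2(z) = 2/z^2 + O(\log z)$, from which $R(z) = z/2 + O(z^3 \log z)$; then the $-4/z$ term manifestly dominates the sum, with the $O(z)$ and $O(z^3 \log z)$ corrections being harmless at $z = 1/10$ (after explicit numerical bookkeeping of the leading constants in the Bessel series).

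The main obstacle is the rigorous, quantitative control of the Bessel function asymptotics at the specific threshold values $z = 1/10$ and $z = 70$: the inequality must be verified with concrete constants rather than up to an unspecified order term. This will involve either retaining sufficiently many terms of the asymptotic expansion (with error bounds from integral representations) or, alternatively, a hybrid analytic/numerical argument that isolates a computable majorant of the error. Once the inequality \eqref{E:pdecreasesinzinequality} is established in both regimes, $\widetilde{\mathfrak{H}}$ is strictly decreasing in $z$ at fixed $p$, which together with the explicit inversion $n = pz/(m_0 c^2)$ gives a global smooth inverse of $\Phi$ on each region. Smoothness of the inverse follows from the smoothness of $\mathfrak{H}, \mathfrak{P}$ and the non-vanishing Jacobian guaranteed by the inverse function theorem.
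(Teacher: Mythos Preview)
Your approach is essentially the same as the paper's. Both reduce the invertibility to the single scalar inequality \eqref{E:pdecreasesinzinequality}, $3R(z)+zR(z)^2-z-4/z<0$ with $R=K_1/K_2$, and then verify it in the two asymptotic regimes by expanding $R(z)$ with explicit remainder bounds. The paper frames this as showing $\partial_z\pressure|_{\Ent}<0$, while you frame it as strict monotonicity of $\widetilde{\mathfrak{H}}$ at fixed $p$; since $p\exp(\Ent/k_B)$ is a function of $z$ alone, $(1/p)\partial_z\pressure|_{\Ent}=(1/k_B)\partial_z\Ent|_{\pressure}$, so the two computations coincide. (Note a harmless slip: your displayed identity should read $(1/k_B)\partial_z\widetilde{\mathfrak{H}}=3R+zR^2-z-4/z$, without the extra factor of $z$.) For the quantitative step you flag as the main obstacle, the paper supplies precisely what you need in Corollary \ref{C:Besselratio}: explicit bounds $|R-z/2|\le 2z^2$ for $z\le 1/10$ and $|R-1+3/(2z)-15/(8z^2)|\le 16/z^3$ for $z\ge 10$, derived directly from the integral representation \eqref{E:Besseldef} and the asymptotic expansion \eqref{E:Besselasymptoticexpansion}, which make the numerical verification at $z=1/10$ and $z=70$ routine.
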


\begin{proof}
Using \eqref{E:PnTEulerrelationintro} and \eqref{E:nEntzrelationintro}, it follows that $p\exp \Big(\frac{\Ent}{k_B} \Big)$
is a smooth function of $z$ alone. Therefore, by the implicit function theorem, since $p >0$ holds whenever $z > 0,$ we can locally solve for $z$ in terms of
$\Ent, \pressure$ if $\left. \frac{\partial \pressure}{\partial z} \right|_{\Ent} \neq 0.$ We will show that
$\left. \frac{\partial \pressure}{\partial z} \right|_{\Ent} < 0$ holds for $0 < z \leq 1/10$ and $z \geq 70.$

We begin by quoting equation \eqref{E:PderivativeoverP}, which states that
\begin{align}
	\frac{\partial_z |_{\Ent} \pressure}{\pressure} & = 3 \frac{K_1(z)}{K_2(z)} + z  \Big( \frac{K_1(z)}{K_2(z)} 
		\Big)^2 - z - \frac{4}{z}. \label{E:PderivativeoverPagain}	
\end{align}
Using \eqref{E:PderivativeoverPagain} and the expansions \eqref{E:K1overK2smallzexpansion} - \eqref{E:K1overK2squaredsmallzexpansion}, it follows that
\begin{align}
	z \frac{\partial_z |_{\Ent} \pressure}{\pressure} = - 4 + z^2/2 + 3 z \epsilon_1(z) + z^4/4 + z^2 \epsilon_2(z). 
	\notag
\end{align}
Thus, using the bounds \eqref{E:smallzepsilonbounds}, we obtain that
\begin{align}
	z \frac{\partial_z |_{\Ent} \pressure}{\pressure} < -3, && (0 < z \leq 1 /10).
	\notag
\end{align}
Since $\pressure \geq 0,$ it follows that $\partial_z |_{\Ent} \pressure$ whenever $0 < z \leq 1 /10$ as desired.

On the other hand, using the expansions \eqref{E:K1overK2largezexpansion} and \eqref{E:K1overK2squaredlargezexpansion},
it follows that
\begin{align}
	z \frac{\partial_z |_{\Ent} \pressure}{\pressure} = -5/2 + \frac{45}{8z} + 3z \widetilde{\epsilon}_1(z) + z^2 
	\widetilde{\epsilon}_2(z). 
	\notag
\end{align}
Therefore, using the bounds \eqref{E:largezepsilonbounds}, it follows that
\begin{align}
	z \frac{\partial_z |_{\Ent} \pressure}{\pressure} < -1, && (z \geq 70).
	\notag
\end{align}
Therefore, by the implicit function theorem, we can solve for $z$ in terms of $\Ent, p$ if $0 < z \leq 1/10$ or $z \geq 70.$ Since $n = m_0^{-1} c^{-2} \pressure z$ from \eqref{E:PnTEulerrelationintro}, the same is true of $n.$ This completes the proof of Lemma \ref{L:invertiblemaps}.
\end{proof}

\begin{remark}
It is clear from the proof of the lemma that Conjecture \ref{C:Tempisgood} can be shown by demonstrating the negativity of 
$\left. \frac{\partial \pressure}{\partial z} \right|_{\Ent}$ for all $z > 0.$ Thus, the numerical plot 
in Figure \ref{fig:partialppartialz}, which was created with Maple 11.0, is the motivation for our conjecture.
\end{remark}

\begin{figure}[h!]
  \begin{center}
 \scalebox{.50}{\includegraphics{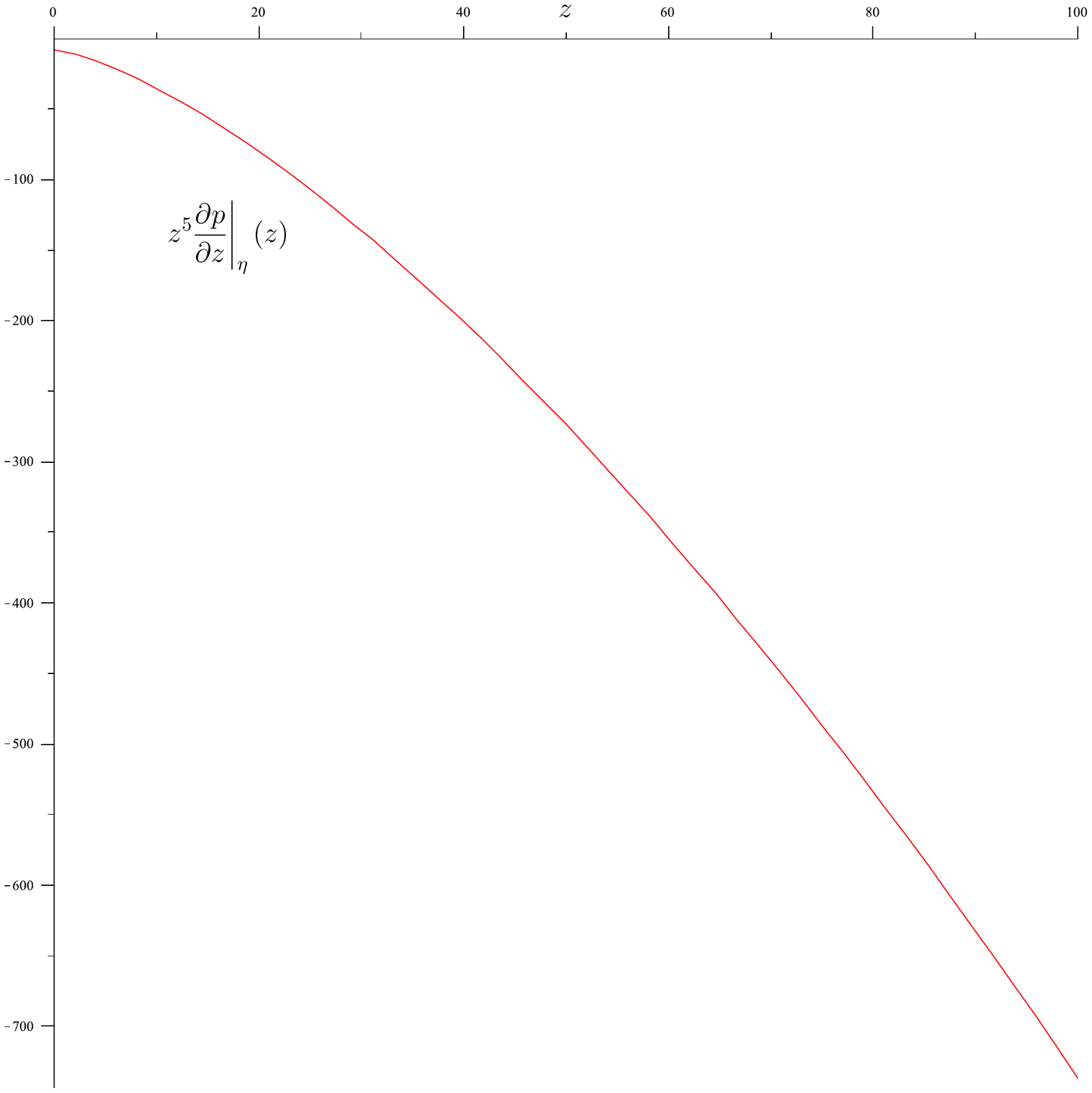}}
  \end{center}
  \caption{$\left. z^5 \frac{\partial \pressure}{\partial z} \right|_{\Ent}$ plotted as a function of $z.$}
  \label{fig:partialppartialz}
\end{figure}

\subsection[Regimes of hyperbolicity for the rE system]{Regimes of hyperbolicity for the rE system and the existence of the kinetic equation of state} \label{SS:hyperbolicity}
%\subsectionmark{Regimes of hyperbolicity for the rE system}

In this section, we prove that whenever $\Temp$ is sufficiently small and positive or sufficiently large, 
there exists an equation of state of the form \eqref{E:EOS}, i.e., of the form $\pressure = f_{kinetic}(\eta, \rho).$
Furthermore, under the same temperature assumptions, we show that the equation of state satisfies 
$0 < \left. \frac{\partial f_{kinetic}}{\partial \rho} \right|_{\Ent} < 1.$ We remark that $\left. \frac{\partial f_{kinetic}}{\partial \rho} \right|_{\Ent}$ can be expressed as a function of $\Temp$ alone. As previously discussed, this condition is sufficient to ensure the hyperbolicity of the rE system in these 
temperature regimes; in particular, as discussed in Remark \ref{R:hyperbolicity}, the condition $0 < \left. \frac{\partial f_{kinetic}}{\partial \rho} \right|_{\Ent}$ plays a fundamental role in the proof of local existence. Our result rigorously shows that outside of a compact set of $\Temp$ values, a slightly weaker version of Conjecture \ref{C:Speedisreal} holds. Furthermore, at the end of this section, we provide Figure \ref{fig:toucan}, which is our numerical evidence for the validity of the conjecture. Also see the discussion at the end of Section \ref{SS:rEintro}. For convenience, we use the variable $z$ from \eqref{zdef} during the statement and proof of the lemma.

\begin{lemma} \label{L:speedofsound} \textbf{(Hyperbolicity of the rE system)}
	Assume that the functional relations \eqref{E:PnTequationofstateMaxwellian}, \eqref{E:rhoMaxwellian}, and 
	\eqref{E:nzEntrelation} hold for the macroscopic variables $n, \Temp, \Ent, \pressure,$ and $\rho.$ 
	Then if $0 < z \leq 1/10$ or $z \geq 70,$ $\pressure$ can be expressed as a smooth function $f_{kinetic}$
	of $\Ent$ and $\rho:$ $\pressure = f_{kinetic}(\Ent,\rho).$

	Furthermore, the following estimate holds for $0 < z \leq 1/10:$
	\begin{align}
		\bigg| \big. \frac{\partial \pressure}{\partial \rho} \big|_{\Ent} -  \frac{1}{3} \bigg| \leq z^2.
		\label{E:speedofsoundsquared0asymptoticexpansion}	
	\end{align}
Additionally, the following estimate holds for $z \geq 70:$
	\begin{align}
	 \bigg| \big. \frac{\partial \rho}{\partial \pressure} \big|_{\Ent} - \frac{3z}{5} \bigg| & \leq 41. 
	 	\label{E:speedofsoundsquaredinfinityasymptoticexpansion}
	\end{align}
	
\end{lemma}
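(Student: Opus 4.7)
The plan is to leverage Lemma \ref{L:invertiblemaps}, which already establishes that the map $(n,z)\to(\Ent,\pressure)$ is a smooth local diffeomorphism in the regimes $0<z\le 1/10$ and $z\ge 70$. Since \eqref{E:nrhoprelationintro} expresses $\rho$ smoothly in terms of $(n,z)$, composing with that inverse yields $\rho$ as a smooth function of $(\Ent,\pressure)$. To invert further to obtain $\pressure=f_{kinetic}(\Ent,\rho)$, I would invoke the implicit function theorem once I verify $\left.\partial \rho/\partial \pressure\right|_{\Ent}\ne 0$ in the two regimes. Viewing both $\pressure$ and $\rho$ as functions of $z$ along curves of constant $\Ent$, the chain rule gives
\begin{equation*}
\left.\frac{\partial \pressure}{\partial \rho}\right|_{\Ent}
=\frac{\partial_z|_{\Ent}\pressure}{\partial_z|_{\Ent}\rho},
\end{equation*}
so the needed non-vanishing is equivalent to $\partial_z|_{\Ent}\rho\ne 0$, and the whole quantity is a function of $z$ alone.

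Next, I would compute the two partials explicitly. The numerator $\partial_z|_{\Ent}\pressure/\pressure$ has already been reduced in the proof of Lemma \ref{L:invertiblemaps} to the expression \eqref{E:PderivativeoverPagain}. For $\partial_z|_{\Ent}\rho$, I would differentiate \eqref{E:rhoMaxwellian} (or its equivalent form in Proposition \ref{P:Maxwellianidentities}) while using $\partial_z|_{\Ent} n$ obtained by implicitly differentiating \eqref{E:changeofstatespacevariablesH} at constant $\Ent$, exactly as in \eqref{E:dzdEntnconstant}. Combining these with the Bessel recursion \eqref{E:Besselrecursion} produces the closed-form ratio displayed as the middle term of \eqref{E:weakspeedinequality}, which is $\bigl(\left.\partial \pressure/\partial \rho\right|_{\Ent}\bigr)^{-1}$ expressed purely in terms of $z$ and $K_1(z)/K_2(z)$.

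The quantitative bounds then reduce to asymptotic analysis of this single closed-form function. For $0<z\le 1/10$, I would substitute the small-$z$ expansions of $K_1/K_2$ and $(K_1/K_2)^2$ that are catalogued in the paper (and already used for Lemma \ref{L:invertiblemaps}) into both $\partial_z|_{\Ent}\pressure$ and $\partial_z|_{\Ent}\rho$, verify that each retains a definite sign (in particular that the denominator of \eqref{E:weakspeedinequality} is bounded away from zero and negative, using \eqref{E:smallzepsilonbounds}), divide the expansions, and read off the leading constant $1/3$ together with a remainder of order $z^2$, giving \eqref{E:speedofsoundsquared0asymptoticexpansion}. For $z\ge 70$, I would instead work with the reciprocal quantity $\left.\partial\rho/\partial\pressure\right|_{\Ent}$, substitute the large-$z$ expansions of $K_1/K_2$ from the proof of Lemma \ref{L:invertiblemaps}, expand in inverse powers of $z$, identify the leading term $3z/5$, and estimate the tail using \eqref{E:largezepsilonbounds}, producing \eqref{E:speedofsoundsquaredinfinityasymptoticexpansion}. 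The strict positivity and the upper bound of one for $\left.\partial \pressure/\partial \rho\right|_{\Ent}$ in the two regimes then follow directly from \eqref{E:speedofsoundsquared0asymptoticexpansion} and \eqref{E:speedofsoundsquaredinfinityasymptoticexpansion}.

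The main obstacle I anticipate is not conceptual but rather the careful bookkeeping of error terms through a ratio whose numerator and denominator each exhibit near-cancellation, which is precisely the phenomenon responsible for the sound speed being bounded by $\sqrt{1/3}\,c$ rather than $c$. In particular, making the threshold values $1/10$ and $70$ and the constants $z^2$ and $41$ explicit will require uniform lower bounds on the denominator $3(K_1/K_2)+z(K_1/K_2)^2-z-4/z$ appearing in \eqref{E:weakspeedinequality} throughout the two regimes, and then tracking the Bessel expansion errors through a rational combination. None of this goes beyond the techniques already deployed in the proof of Lemma \ref{L:invertiblemaps}, but the arithmetic must be carried out with some care to land the stated constants.
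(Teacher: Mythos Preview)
Your proposal is correct and follows essentially the same route as the paper: derive the closed-form expression \eqref{E:inversespeedofsoundsquaredBessel} for $\partial_z|_{\Ent}\rho/\partial_z|_{\Ent}\pressure$ via the chain rule and Bessel identities, then feed in the small-$z$ and large-$z$ expansions of $K_1/K_2$ from Corollary \ref{C:Besselratio} and track the error terms through the rational expression. The only cosmetic difference is that the paper works directly from $\rho=\pressure\bigl(zK_1/K_2+3\bigr)$ rather than first invoking Lemma \ref{L:invertiblemaps}, but this amounts to the same computation.
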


\begin{remark}
	We did not attempt to be optimal in our estimate of the error terms on the right-hand sides of the above inequalities.
\end{remark}

\begin{proof} 
	It follows from \eqref{E:PnTequationofstateMaxwellian}, \eqref{E:rhoMaxwellian}, and \eqref{E:nzEntrelation} 
	that
	\begin{align} 
		\pressure & = 4\pi e^4 m_0^4 c^5 h^{-3} \exp\Big(\frac{-\Ent}{k_B} 
		\Big)\frac{K_2(z)}{z^2} \exp\Big(z  \frac{K_1(z)}{K_2(z)} \Big), \label{E:pressureexpression} \\
		\rho & = \pressure \Big(z \frac{K_1(z)}{K_2(z)} + 3 \Big). \label{E:rhoPzrelation} 
	\end{align}
	We use the following version of the chain rule: $\frac{\partial \pressure}{\partial \rho} \big|_{\Ent} = \frac{\partial_z|_{\Ent} \pressure}{\partial_z|_{\Ent} \rho}$.
	Using \eqref{E:rhoPzrelation}, we further deduce that		
	\begin{align} \label{E:inversespeedofsoundsquaredexpression}
		\frac{\partial_z|_{\Ent} \rho}{\partial_z|_{\Ent} \pressure}
			= 3 + z \frac{K_1(z)}{K_2(z)} +  \Big(\frac{\pressure}{\partial_z|_{\Ent} \pressure} \Big)
			\frac{d}{dz} \Big[z \frac{K_1(z)}{K_2(z)}\Big].				
	\end{align}
	We then use the identities $z K_1' = K_1 - zK_2$ and $K_2' = -2 z^{-1} K_2 - K_1,$ which follow from
	\eqref{E:Besselrecursion} and \eqref{E:Besseldifferentialidentity}, to compute that	
	\begin{align}
		\frac{d}{dz} \Big( z  \frac{K_1(z)}{K_2(z)} \Big) & = 4 \frac{K_1(z)}{K_2(z)}
			+ z  \Big( \frac{K_1(z)}{K_2(z)} \Big)^2 -z, \label{E:zK1K2derivative} \\
		\frac{\partial_z |_{\Ent} \pressure}{\pressure} & = 3 \frac{K_1(z)}{K_2(z)} + z  \Big( \frac{K_1(z)}{K_2(z)} 
		\Big)^2 - z - \frac{4}{z}. \label{E:PderivativeoverP}
	\end{align}
	Combining \eqref{E:inversespeedofsoundsquaredexpression}, \eqref{E:zK1K2derivative}, and \eqref{E:PderivativeoverP},
	we have that
\begin{align} \label{E:inversespeedofsoundsquaredBessel}
		\frac{\partial_z|_{\Ent} \rho}{\partial_z|_{\Ent} \pressure}
			= 3 + z \frac{K_1(z)}{K_2(z)}
			%\\
%			 +  
%			\bigg\lbrace 3 \frac{K_1(z)}{K_2(z)} + z  \Big( \frac{K_1(z)}{K_2(z)} \Big)^2 - z - \frac{4}{z} \Big\rbrace^{-1}
%			 \bigg\lbrace 4 \frac{K_1(z)}{K_2(z)} + z  \Big( \frac{K_1(z)}{K_2(z)} \Big)^2 -z \bigg\rbrace.		
+  
\frac{ 4 \frac{K_1(z)}{K_2(z)} + z  \Big( \frac{K_1(z)}{K_2(z)} \Big)^2 -z }{3 \frac{K_1(z)}{K_2(z)} + z  \Big( \frac{K_1(z)}{K_2(z)} \Big)^2 - z - \frac{4}{z}}.	
\end{align}
Using Corollary \ref{C:Besselratio}, we can write
\begin{align} \label{E:K1overK2smallzexpansion}
	\frac{K_1(z)}{K_2(z)} & = \frac{z}{2} + \epsilon_1(z), \\
	\Big(\frac{K_1(z)}{K_2(z)}\Big)^2	& = \frac{z^2}{4} + \epsilon_2(z), \label{E:K1overK2squaredsmallzexpansion}
\end{align}
where for $0 < z \leq 1/10,$ we have the estimates 
\begin{align} \label{E:smallzepsilonbounds}
	|\epsilon_1(z)| & \leq 2z^2, && |\epsilon_2(z)| \leq 2z^3.
\end{align}
Inserting these expansions into \eqref{E:inversespeedofsoundsquaredBessel} and multiplying the numerator and denominator 
of the fraction by $z,$ we have that
\begin{align}  \label{E:inversespeedofsoundsquaredexpressionexpanded}
	\frac{\partial_z|_{\Ent} \rho}{\partial_z|_{\Ent} \pressure}
	& =	3 + \frac{z^2}{2} + z \epsilon_1 + 
	\frac{z^2 + 4 z \epsilon_1 + z^4/4 + z^2 \epsilon_2}{z^2/2 + 3 z \epsilon_1
+ z^4/4 + z \epsilon_2 - 4}.
\end{align}
Using the bounds \eqref{E:smallzepsilonbounds}, it follows that for $0 < z \leq 1/10,$ the second term on the right-hand side of \eqref{E:inversespeedofsoundsquaredexpressionexpanded} (i.e. the $z^2/2$ term)  partially cancels the last term (i.e., the large fraction which is negative), which implies that
\begin{align} \label{E:inversespeedofsoundsquaredestimated}
	\Big| \frac{\partial_z|_{\Ent} \rho}{\partial_z|_{\Ent} \pressure} - 3 \Big| & \leq \frac{7z^2}{10}, 
	&& (0 < z \leq 1/10).
\end{align}
The facts that $\pressure$ can be expressed as a smooth function of $\Ent$ and $\rho$ whenever $0 < z \leq 1/10,$ and that  inequality \eqref{E:speedofsoundsquared0asymptoticexpansion} is verified both easily follow from \eqref{E:inversespeedofsoundsquaredestimated}.

To prove \eqref{E:speedofsoundsquaredinfinityasymptoticexpansion}, we again use
Corollary \ref{C:Besselratio} to write
\begin{align}
	\frac{K_1(z)}{K_2(z)} & = 1 - \frac{3}{2z} + \frac{15}{8z^2} + \widetilde{\epsilon}_1(z), 
		\label{E:K1overK2largezexpansion} \\
	\Big(\frac{K_1(z)}{K_2(z)}\Big)^2	& = 1 - \frac{3}{z} + \frac{6}{z^2} + \widetilde{\epsilon}_2(z),
		\label{E:K1overK2squaredlargezexpansion}
\end{align}
where for $z \geq 10,$ we have  that
\begin{align} \label{E:largezepsilonbounds}
	|\widetilde{\epsilon}_1(z)| & \leq \frac{16}{z^2}, && |\widetilde{\epsilon}_2(z)| \leq \frac{40}{z^3}.
\end{align}
Inserting these expansions into \eqref{E:inversespeedofsoundsquaredBessel} and multiplying the numerator and denominator 
of the fraction by $z,$ it follows that 
\begin{align} \label{E:largezinversespeedofsoundsquaredexpressionexpanded}
	\frac{\partial_z|_{\Ent} \rho}{\partial_z|_{\Ent} \pressure}
	& = \frac{3z}{5} + \frac{3}{2} + \frac{15}{8z} + z\widetilde{\epsilon}_1
		+ \frac{\frac{9}{4} + \frac{15}{2z} + 4 z \widetilde{\epsilon}_1 + \frac{6}{5} z^2 \widetilde{\epsilon}_1 
		+ z^2 \widetilde{\epsilon}_2 + \frac{2}{5}z^3 \widetilde{\epsilon}_2}
		{-5/2 + \frac{45}{8z} + 3z \widetilde{\epsilon}_1 + z^2 \widetilde{\epsilon}_2}.
\end{align}
Using the bounds \eqref{E:largezepsilonbounds} and the expression \eqref{E:largezinversespeedofsoundsquaredexpressionexpanded}, it can be checked that for $z \geq 70,$ we have that
\begin{align} \label{E:dpdrholargezestimate}
	\Big|\frac{\partial_z|_{\Ent} \rho}{\partial_z|_{\Ent} \pressure} - \frac{3z}{5} \Big| & \leq 41.
\end{align}
The facts that $\pressure$ can be expressed as a smooth function of $\Ent$ and $\rho$ whenever $z \geq 70,$ and that  
inequality \eqref{E:speedofsoundsquaredinfinityasymptoticexpansion} is verified, both easily
follow from \eqref{E:dpdrholargezestimate}.
\end{proof}

\begin{remark}
Notice that the Conjecture \ref{C:Speedisreal} is equivalent to the conjecture that the right-hand side of \eqref{E:inversespeedofsoundsquaredBessel} is $> 3$ for all $z > 0.$ In Figure \ref{fig:toucan}, we present a numerical plot, which was created with Maple $11.0,$ that covers the set of $z$ values lying outside of the scope of Lemma \ref{L:speedofsound}, and that suggests that this conjecture is true. Note that the inequalities of the 
conjecture are stronger than those proved in the lemma. 
\end{remark}

\begin{figure}[h!]
\begin{center}
 \scalebox{.50}{\includegraphics{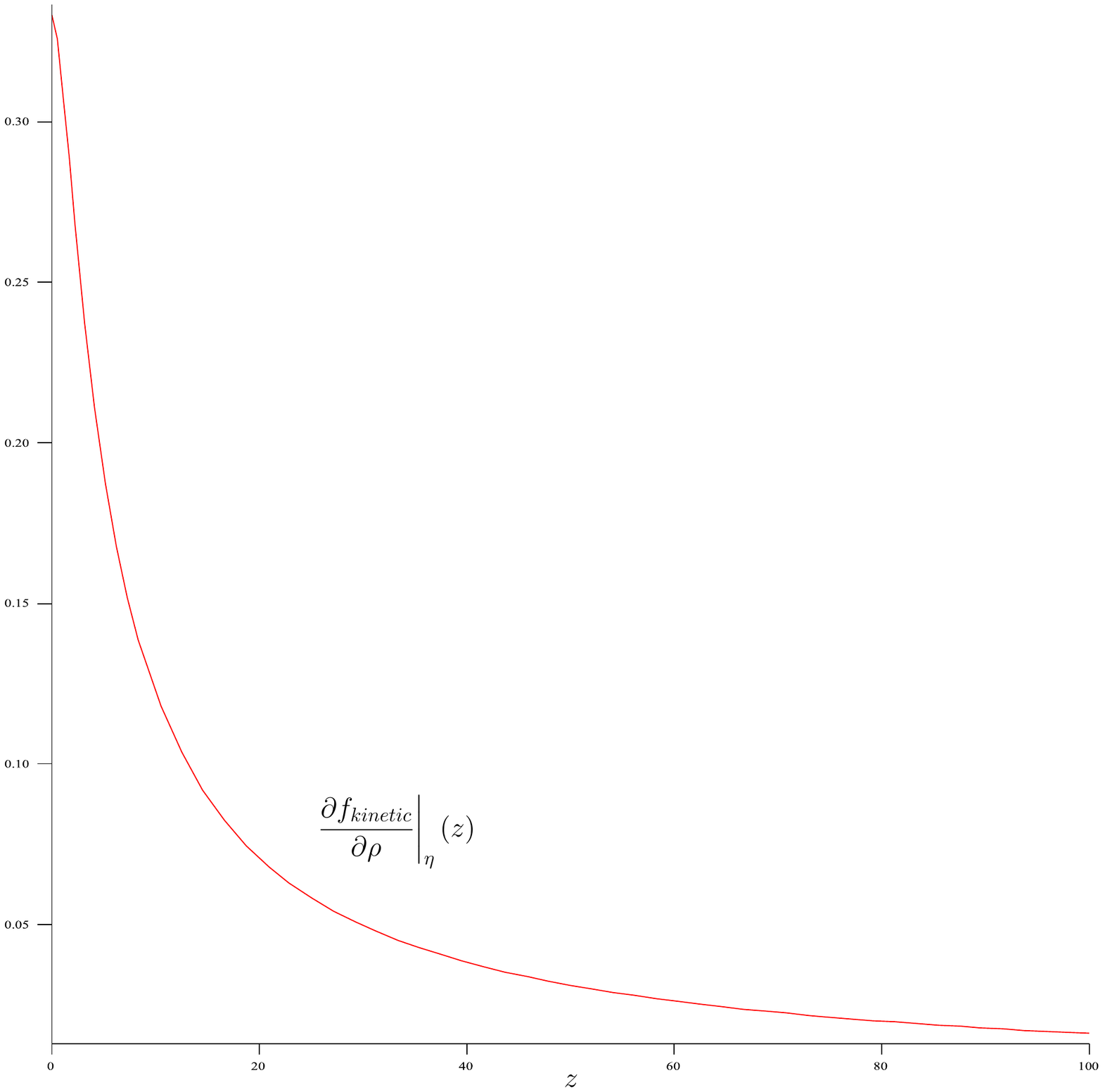}}
  \end{center}
  \caption{$\left. \frac{\partial \pressure}{\partial \rho} \right|_{\Ent}$ plotted as a function of $z.$}
  \label{fig:toucan}
\end{figure}

\subsection{Bessel function identities and inequalities} \label{SS:Besselfunction}

We now state the technical lemma that contains the Bessel function properties that we have used throughout this
article. The expansion \eqref{E:Besselasymptoticexpansion} (including the error terms) and inequality \eqref{E:Besselincreasinginj}
can be found in \cite{fO1974}. The remaining identities can be found in \cite[Chapter 2]{sdGsvLwvW1980}.

\begin{lemma}  \label{L:Bessel functions} (Properties of Bessel functions) 
	Let $K_j(z)$ be the Bessel function defined by
	\begin{align} 
	\label{E:Besseldef}
		K_j(z) & \eqdef \frac{(2^j)j!}{(2j)!} \frac{1}{z^j} \int_{\lambda = z}^{\lambda = \infty} e^{- \lambda}(\lambda^2 - z^2)^{j 
			-  (1/2)} \, d \lambda, && (j \geq 0).
	\end{align}
Then the following identities  hold:
	\begin{align}
		K_j(z)  & = \frac{2^{j-1}(j-1)!}{(2j - 2)!} \frac{1}{z^j} 
			\int_{\lambda = z}^{\lambda = \infty} \lambda e^{- \lambda}(\lambda^2 - z^2)^{j - (3/2)} \, d \lambda, 
			\quad (j > 0), 
			\label{E:Besselalternatedef} 
			\\
		K_{j+1}(z)  & = 2j \frac{K_{j}(z)}{z} + K_{j-1}(z), \quad (j \geq 1), 
		\label{E:Besselrecursion} 
\end{align}		
also
\begin{align}
		\frac{d}{dz} \Big(\frac{K_j(z)}{z^j} \Big)  & = - \Big(\frac{K_{j+1}(z)}{z^j} \Big), \quad (j \geq 0), 
			\label{E:Besseldifferentialidentity}
		\\
		K_j(z)  & = \sqrt{\frac{\pi}{2z}}e^{-z} \Big( \gamma_{j,n}(z)z^{-n} + \sum_{m=0}^{n-1} A_{j,m} z^{-m}
			\Big), \quad (j \geq 0, n \geq 1),	\label{E:Besselasymptoticexpansion} 
	\end{align}
	where the following additional identities and inequalities also hold:
\begin{align}
		A_{j,0} & = 1, \notag \\
		A_{j,m} & = \frac{(4j^2 - 1)(4j^2 - 3^2) \cdots (4j^2 - (2m -1)^2)}{m! 8^m}, \quad (j \geq 0, m \geq 1),
		\notag 
		\\
		|\gamma_{j,n}(z)| & \leq 2 \exp\big([j^2 - 1/4]z^{-1}\big)|A_{j,n}|, \quad (j \geq 0, n \geq 1),
		\notag 
		\\
		K_{j}(z) & < K_{j+1}(z), \quad (j \geq 0). \label{E:Besselincreasinginj} 	
	\end{align}	
\end{lemma}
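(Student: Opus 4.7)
The strategy is to read off each identity or inequality either by direct manipulation of the defining integral \eqref{E:Besseldef} or by citation to the standard references \cite{fO1974} and \cite[Chapter 2]{sdGsvLwvW1980}, where all of these facts are classical. The only computations I would carry out in detail are the two that follow most transparently from the integral representation: the alternate form \eqref{E:Besselalternatedef} and the differentiation identity \eqref{E:Besseldifferentialidentity}.

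First I would derive \eqref{E:Besselalternatedef} by integration by parts in \eqref{E:Besseldef}: writing $(\lambda^2-z^2)^{j-1/2} = \frac{1}{2j+1}\frac{d}{d\lambda}\bigl[(\lambda^2-z^2)^{j+1/2}\bigr]\cdot(2\lambda)^{-1}\cdot(2j+1)$ and integrating against $e^{-\lambda}$ yields, after a boundary term that vanishes at $\lambda=z$ and $\lambda=\infty$, the integrand with $\lambda$ in front and exponent reduced by one; matching the combinatorial prefactors gives \eqref{E:Besselalternatedef}. Next, \eqref{E:Besseldifferentialidentity} follows by differentiating $z^{-j}K_j(z)$ under the integral sign in \eqref{E:Besseldef}: the $z$-dependence sits in the lower limit of integration and in the factor $(\lambda^2-z^2)^{j-1/2}$; a short computation (the boundary term again vanishing) reproduces the defining integral for $K_{j+1}(z)/z^j$ with a minus sign.

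The recursion \eqref{E:Besselrecursion} can then be extracted by combining \eqref{E:Besseldifferentialidentity} at indices $j$ and $j-1$ (or, equivalently, by writing $(\lambda^2-z^2)^{j+1/2} = (\lambda^2-z^2)^{j-1/2}(\lambda^2-z^2)$ and expanding the last factor as $(\lambda-z)(\lambda+z)$ inside the integral, then integrating by parts once). Both derivations are elementary and are carried out in \cite[Chapter 2]{sdGsvLwvW1980}, which I would cite rather than reproduce.

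For the asymptotic expansion \eqref{E:Besselasymptoticexpansion} with the explicit coefficients $A_{j,m}$ and the sharp remainder bound on $\gamma_{j,n}(z)$, I would cite Olver's treatise \cite{fO1974}; the formula is his standard large-argument expansion of $K_j$, and the remainder estimate $|\gamma_{j,n}(z)|\le 2\exp([j^2-1/4]z^{-1})|A_{j,n}|$ is Olver's error bound specialized to real positive $z$. Finally, the monotonicity \eqref{E:Besselincreasinginj} follows most cleanly by pointwise comparison: for fixed $z>0$ the integrand $e^{-\lambda}(\lambda^2-z^2)^{j-1/2}$ in \eqref{E:Besseldef} depends on $j$ through the exponent of $(\lambda^2-z^2)\ge 0$, and one checks using the recursion \eqref{E:Besselrecursion} that $K_{j+1}(z)-K_j(z) = (2j-1)K_j(z)/z + (K_{j-1}(z)-K_j(z))$ has a definite sign by induction in $j$, with the base case $K_0<K_1$ taken from \cite{fO1974}. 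The only step requiring nontrivial work beyond bookkeeping is verifying that Olver's error bound specializes to the stated form; everything else is either a direct integral manipulation or a standard reference, so there is no substantive obstacle.
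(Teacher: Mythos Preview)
Your approach matches the paper's: the paper gives no proof at all and simply cites \cite{fO1974} for the asymptotic expansion \eqref{E:Besselasymptoticexpansion} (including the error bound) and the monotonicity \eqref{E:Besselincreasinginj}, and \cite[Chapter 2]{sdGsvLwvW1980} for the remaining identities. Your additional sketches for \eqref{E:Besselalternatedef} and \eqref{E:Besseldifferentialidentity} via integration by parts and differentiation under the integral are correct and go beyond what the paper does.

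One wobble: your induction argument for \eqref{E:Besselincreasinginj} does not close as written. From the recursion one gets $K_{j+1}(z)-K_j(z) = 2jK_j(z)/z - \bigl(K_j(z)-K_{j-1}(z)\bigr)$ (note $2j$, not $2j-1$), and knowing $K_j-K_{j-1}>0$ does not by itself force the right-hand side to be positive. Since you also cite \cite{fO1974} for this inequality, the proof is still complete by reference, which is exactly what the paper does.
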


The following corollary of Lemma \ref{L:Bessel functions} is used in the proof of Lemma \ref{L:speedofsound}.
\begin{corollary} \label{C:Besselratio}
	For $0 < z \leq 1/10,$ the following inequalities hold:
	\begin{subequations}
	\begin{align}
		\Big|\frac{K_1(z)}{K_2(z)} - \frac{z}{2} \Big| & \leq 2z^2, 
		\label{E:smallzRdecomposition} \\
		\Big|\Big(\frac{K_1(z)}{K_2(z)}\Big)^2 - \frac{z^2}{4} \Big| & \leq 2z^3.
			\label{E:smallzRsquareddecomposition}
	\end{align}
	\end{subequations}	
	For $z \geq 10,$ the following inequalities hold:
	\begin{subequations}
	\begin{align}
		\Big|\frac{K_1(z)}{K_2(z)} - 1 + \frac{3}{2z} - \frac{15}{8z^2} \Big| & \leq \frac{16}{z^3}, 
			\label{E:largezRdecomposition} \\
		\Big|\Big(\frac{K_1(z)}{K_2(z)}\Big)^2 - 1 + \frac{3}{z} - \frac{6}{z^2}\Big| & \leq \frac{40}{z^3}.
			\label{E:largezRsquareddecomposition}
	\end{align}
	\end{subequations}
\end{corollary}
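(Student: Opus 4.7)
The proof naturally splits along the two regimes ($0<z\leq 1/10$ and $z\geq 10$), and within each regime the squared-ratio bound reduces to the first-power bound by elementary algebra. The plan is to treat the large-$z$ case with the asymptotic expansion \eqref{E:Besselasymptoticexpansion}, the small-$z$ case with the recursion \eqref{E:Besselrecursion}, and then harvest the squared inequalities at the end.

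For the large-$z$ estimate \eqref{E:largezRdecomposition}, I would apply \eqref{E:Besselasymptoticexpansion} with $n=3$ for $j=1$ and $j=2$. The coefficients are read off from the given formula as $A_{1,1}=3/8$, $A_{1,2}=-15/128$, $A_{2,1}=15/8$, $A_{2,2}=105/128$. The common prefactor $\sqrt{\pi/(2z)}\,e^{-z}$ cancels in the quotient, and applying the identity $(1+v)^{-1} = 1 - v + v^2 - v^3(1+v)^{-1}$ to the denominator expansion produces
\[
\frac{K_1(z)}{K_2(z)} = 1 - \frac{3}{2z} + \frac{15}{8z^2} + \frac{R(z)}{z^3},
\]
where $R(z)$ is an explicit rational expression in $\gamma_{1,3}(z)$, $\gamma_{2,3}(z)$, and reciprocal powers of $z$. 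The bound $|\gamma_{j,n}(z)|\leq 2\exp([j^2-1/4]/z)|A_{j,n}|$ from Lemma~\ref{L:Bessel functions}, combined with a uniform lower bound on the denominator $1+15/(8z)+\cdots$ (which is bounded below by, say, $1/2$ for $z\geq 10$), reduces $|R(z)|\leq 16$ to a routine check.

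For the small-$z$ estimate \eqref{E:smallzRdecomposition}, the large-$z$ expansion is useless, so instead I would use the recursion \eqref{E:Besselrecursion} with $j=1$, namely $K_2(z) = 2K_1(z)/z + K_0(z)$. A one-line manipulation yields the exact identity
\[
\frac{K_1(z)}{K_2(z)} - \frac{z}{2} \;=\; -\,\frac{z^2\,K_0(z)}{2\bigl(2K_1(z)+zK_0(z)\bigr)}.
\]
Quantitative control then comes from two elementary small-$z$ estimates, both extractable directly from the integral formulas \eqref{E:Besseldef} and \eqref{E:Besselalternatedef}: a lower bound of the form $K_1(z)\geq c/z$ (the integrand in \eqref{E:Besselalternatedef} dominates $\lambda e^{-\lambda}$ for $\lambda$ a bit away from $z$), and an upper bound of logarithmic order for $K_0(z)$. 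Together these show the right-hand side is $O(z^3|\ln z|)$, easily dominated by $2z^2$ on $(0,1/10]$.

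Finally, the squared bounds \eqref{E:smallzRsquareddecomposition} and \eqref{E:largezRsquareddecomposition} follow at once from $a^2-b^2 = (a-b)(a+b)$ together with the already-established first-power bounds. Since \eqref{E:Besselincreasinginj} gives $0<K_1(z)/K_2(z)<1$, and in each regime the leading term $b$ (either $z/2$ or $1-3/(2z)+15/(8z^2)$) satisfies $|b|\leq 1$, one has $|K_1/K_2+b|\leq 2$, which converts the stated bounds on the differences to the squared versions while at worst doubling the constants. The main obstacle will be the small-$z$ analysis: the paper does not quote small-argument asymptotics for $K_0$ and $K_1$, so those quantitative estimates must be produced directly from \eqref{E:Besseldef} with constants good enough to yield the stated bound $2z^2$, which takes some care although nothing deeper than calculus.
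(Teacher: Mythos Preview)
Your large-$z$ argument for \eqref{E:largezRdecomposition} is essentially identical to the paper's: both take the $n=3$ case of \eqref{E:Besselasymptoticexpansion}, cancel the common prefactor, write $K_1/K_2=(1+A)/(1+B)$, and expand $(1+B)^{-1}=1-B+B^2-B^3/(1+B)$, tracking the $\gamma_{j,3}$ error terms. Nothing to add there.

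Your small-$z$ route for \eqref{E:smallzRdecomposition} is genuinely different. The paper does \emph{not} use the recursion; instead it estimates $K_1$ and $K_2$ separately, directly from the integral definitions, obtaining $|K_1(z)-1/z|\le 1+z/2$ and $|K_2(z)-2/z^2|\le 1+z/3$, and then combines these algebraically. Your approach via the identity
\[
\frac{K_1}{K_2}-\frac{z}{2}=-\frac{z^2K_0}{2(2K_1+zK_0)}
\]
is cleaner in that it isolates the error as a single positive quantity of order $z^3|\ln z|$, but it trades the need to estimate $K_2$ for the need to estimate $K_0$, which still has to be extracted from \eqref{E:Besseldef}. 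Either path works; the paper's is marginally more self-contained since it never leaves the functions $K_1,K_2$ that actually appear.

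There is one slip in your reduction of the squared bounds. Your argument ``$|K_1/K_2+b|\le 2$, hence at worst double the constant'' is fine for large $z$ but fails for small $z$: with $|a-z/2|\le 2z^2$ and $|a+z/2|\le 2$ you get only $|a^2-z^2/4|\le 4z^2$, which is the wrong \emph{power} of $z$, not merely the wrong constant. The fix is immediate---for $0<z\le 1/10$ the first bound already gives $a\le z/2+2z^2$, so $|a+z/2|\le z+2z^2\le 1.2\,z$, and then $|a^2-z^2/4|\le 2z^2\cdot 1.2\,z=2.4\,z^3$; alternatively, since your own small-$z$ argument actually produces $|a-z/2|=O(z^3|\ln z|)\ll z^2$, you recover the stated $2z^3$ with room to spare. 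The paper omits these squared estimates entirely (``we leave the details to the reader''), so you are not missing any hidden idea---just be careful that the factor $|a+b|$ is $O(z)$, not $O(1)$, in the small-$z$ regime.
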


\begin{proof}
	We remark that throughout the proof, we make no attempt to be optimal in our estimates.
	Using \eqref{E:Besseldef} in the case $j=1,$ and the fact that
	$\int_{\lambda = 0}^{\lambda = \infty} \lambda e^{- \lambda}  \, d \lambda = 1,$
	it follows that		
	\begin{align}\notag
		z K_1(z) - 1 =  - \int_{\lambda = 0}^{\lambda = z} e^{- \lambda}\lambda \, d \lambda
			+ \int_{\lambda = z}^{\lambda = \infty} \lambda e^{- \lambda}\bigg\lbrace \sqrt{1 - 
				\Big(\frac{z}{\lambda}\Big)^2} - 1 \bigg\rbrace \, d \lambda.
	\end{align}
	The first integral is trivially bounded in magnitude by $z^2/2.$ Using the 
	fact that $\big|\sqrt{1 - \big(z/\lambda \big)^2} - 1 \big| \leq (z/\lambda)^2$ on the domain $0 \leq z/\lambda \leq 1,$
	it follows that the second integral is bounded in magnitude by
	\begin{align}
		z \int_{\lambda = 0}^{\lambda = \infty} e^{- \lambda} \, d \lambda \leq z.
		\notag
	\end{align}
	We therefore conclude that
	\begin{align} \label{E:K1smallzexpansion}
		\Big|K_1(z) - \frac{1}{z}\Big| & \leq 1 + \frac{z}{2}.
	\end{align}
	Using \eqref{E:Besselalternatedef} in the case $j=2$ 
	and similar arguments, which we leave to the reader, we also conclude that
	\begin{align} \label{E:K2smallzexpansion}
		\Big|K_2(z) - \frac{2}{z^2}\Big| & \leq 1 + \frac{z}{3}.
	\end{align}	
	Using \eqref{E:K1smallzexpansion} and \eqref{E:K2smallzexpansion}, together with simple algebraic 
	estimates, it follows that for $0 \leq z \leq 1/ 10,$ we have	
	\begin{align}  \notag
		\Big|\frac{K_1(z)}{K_2(z)} - \frac{z}{2} \Big| \leq 2 z^2.
	\end{align}
	This proves \eqref{E:smallzRdecomposition}. Inequality \eqref{E:smallzRsquareddecomposition}
	follows from similar reasoning; we leave the details to the reader.

	To prove \eqref{E:largezRdecomposition}, we first decompose	
	\begin{align} \label{E:K1K2ratioexpansion}
		\frac{K_1(z)}{K_2(z)} = \frac{1 + A}{1 + B} = (1 + A)\Big\lbrace 1 - B + B^2 - \frac{B^3}{1 + B} \Big\rbrace,
	\end{align}
	where	
	\begin{subequations}
	\begin{align}
		A & = \frac{3}{8z} - \frac{15}{128z^2} + \frac{\gamma_{1,3}}{z^3},
			\label{E:Aexpansion}  \\
		B & = \frac{15}{8z} + \frac{105}{128z^2} + \frac{\gamma_{2,3}}{z^3}, \label{E:Bexpansion}
	\end{align}
	\end{subequations}
	and the $\gamma_{j,n}$ are from \eqref{E:Besselasymptoticexpansion}.
	
	For the remainder of the proof, we will now assume that $z \geq 10;$ all of our estimates will hold on
	this domain. Now using \eqref{E:Besselasymptoticexpansion}, it can be checked that the following inequalities hold:
	\begin{align} \label{E:gammabound}
		|\gamma_{1,3}| \leq \frac{1}{4}, && |\gamma_{2,3}| \leq 1.
	\end{align}
	Consequently, it is easy to check that the following estimates hold:	
	\begin{subequations}
	\begin{align}
		\frac{1}{4z} \leq A & \leq \frac{1}{2z}, \label{E:Abound} \\
		\frac{1}{z} \leq B & \leq \frac{2}{z}, \label{E:Bbound} \\
		\Big|B - \frac{15}{8z} \Big| & \leq \frac{1}{z^2}, \label{E:Bquadraticbound} \\
		\Big|B^2 - \Big(\frac{15}{8z}\Big)^2 \Big| & \leq \frac{4}{z^3}. \label{E:Bsquaredcubicbound}
	\end{align}
	\end{subequations}	
	Using simple algebraic calculations, it follows from the 
	expansions \eqref{E:K1K2ratioexpansion}, \eqref{E:Aexpansion}, and \eqref{E:Bexpansion} that
	\begin{align} \label{E:K1K2ratiozexpansion}
		\frac{K_1(z)}{K_2(z)} = 1 - \frac{3}{2z} + \frac{15}{8z^2} + O(z^{-3}).
	\end{align}
	In \eqref{E:K1K2ratiozexpansion}, the symbol $O(z^{-3})$ denotes the cubic (in $z^{-1}$) and higher-order terms
	that arise in the expansion of $\frac{K_1(z)}{K_2(z)}.$ We now estimate this $O(z^{-3})$ term by using
	the expansions \eqref{E:K1K2ratioexpansion} - \eqref{E:Bexpansion} to split it into the following $3$ pieces:
	\begin{subequations}
	\begin{align}
		I & = \Big(- \frac{15}{128z^2} + \frac{\gamma_{1,3}}{z^3}\Big)\Big(- B + B^2 - \frac{B^3}{1 + B}\Big), \label{E:termI} \\
		II & = \frac{3}{8z}\Big\lbrace -\Big(B - \frac{15}{8z}\Big) + B^2 - \frac{B^3}{1 + B} \Big\rbrace, \label{E:termII} \\
		III & = \frac{\gamma_{1,3}}{z^3} - \frac{\gamma_{2,3}}{z^3} + B^2 - \Big(\frac{15}{8z} \Big)^2 
			- \frac{B^3}{1 + B}. \label{E:termIII}
	\end{align}
	\end{subequations}
	It is easy to see by sign considerations (i.e., using $B > 0$) that 
	$|- B + B^2 - \frac{B^3}{1 + B}| \leq |B|.$ Using also \eqref{E:gammabound} and \eqref{E:Bbound}, we conclude 
	that the following inequality holds:
	\begin{align} \label{E:Iestimate}
		|I| \leq \left| - \frac{15}{128z^2} + \frac{\gamma_{1,3}}{z^3} \right|  |B| 
		\leq \left(\frac{15}{128z^2} + \frac{1}{4z^2}\right) 
		\frac{2}{z} \leq \frac{3}{4z^3}.
	\end{align}
	For the term $II,$ we use similar sign considerations, together with the estimates \eqref{E:Bbound}
	and \eqref{E:Bquadraticbound} to conclude that 
	\begin{equation} \label{E:IIestimate}
		|II|  \leq \frac{3}{8z} \Big\lbrace \Big|B - \frac{15}{8z}\Big| + |B|^2 \Big\rbrace 
		 \leq \frac{3}{8z} \Big\lbrace \frac{1}{z^2} + \frac{4}{z^2} \Big\rbrace 
			\leq \frac{2}{z^3}. %\notag 
	\end{equation}
	Finally, for the term $III,$ we use the fact that $B > 0,$ together with \eqref{E:gammabound},
	\eqref{E:Bbound}, and \eqref{E:Bsquaredcubicbound} to conclude that
	\begin{align} \label{E:IIIestimate}
		|III| \leq \Big\lbrace \Big|\frac{\gamma_{1,3}}{z^3} \Big| + 
		\Big|\frac{\gamma_{2,3}}{z^3} \Big| + \Big|B^2 - \Big(\frac{15}{8z} \Big)^2 \Big| + |B|^3 \Big\rbrace
		& \leq \frac{1}{4z^3} + \frac{1}{z^3} + \frac{4}{z^3} + \frac{8}{z^3} \leq \frac{53}{4z^3}. %\notag
	\end{align}
	Adding \eqref{E:Iestimate}, \eqref{E:IIestimate}, and \eqref{E:IIIestimate}, we arrive at 
	\eqref{E:largezRdecomposition}.

	Inequality \eqref{E:largezRsquareddecomposition} can be shown directly from 
	\eqref{E:largezRdecomposition}; we omit the details.
\end{proof}

\subsection{The Hilbert expansion} \label{SS:HilbertExpansion}

In this section, we perform a Hilbert expansion for the rB equation \eqref{E:rBequivalent}.
We decompose the solution $F^\varepsilon$ as the sum \eqref{hilbertE}
where $F_0,F_1, \ldots, F_6$ in \eqref{hilbertE} will be independent of $\varepsilon$. Also, $\effRE$ is called the \emph{remainder} term; it will depend upon $\varepsilon$. Our main goal in this section is to explain how one can prove Proposition \ref{iterateBDS}, which summarizes the behavior of $F_0,F_1, \ldots, F_6;$ the remainder term $\effRE$ is analyzed in detail in the next section.

We begin by inserting the expansion \eqref{hilbertE} into \eqref{E:rBequivalent} to obtain
\begin{multline*}
\sum_{k=0}^6  \varepsilon^k (\partial_t + \phat \cdot \partial_{\barx}) F_k + \varepsilon^3(\partial_t + \phat \cdot \partial_{\barx}) \effRE 
 = 
 \sum_{k = 0}^6 \mathop{\sum_{i + j = k}}_{0 \leq i, \hspace{.02 in}j \leq 6} 
 \varepsilon^{i + j -1} \mathcal{Q}(F_i,F_j)
\\
+ \varepsilon^5 \mathcal{Q}(\effRE,\effRE) 
+ 
\sum_{k=0}^6 \varepsilon^{2 + k}
\left\{\mathcal{Q}(\effRE,F_k) + \mathcal{Q}(F_k,\effRE)\right\} 
+
\overline{A}.
%\\
%+ \sum_{k = 7}^{12} \mathop{\sum_{i + j = k}}_{1 \leq i, \hspace{.02 in}j \leq 6} \varepsilon^{i + j -1} \mathcal{Q}(F_i,F_j).
\end{multline*}  
Above
$
\overline{A}
\eqdef
\sum_{k = 7}^{12} \sum_{i + j = k, 1 \leq i, j \leq 6} \varepsilon^{i + j -1} \mathcal{Q}(F_i,F_j).
$
Equating like powers of $\varepsilon$ on each side of the equation, we obtain
the following system:
\begin{align}
	  0 & = \mathcal{Q}(F_0,F_0), 
	  \label{eq1m}
	  \\
	  \partial_t F_0 + \phat \cdot \partial_{\barx} F_0 & = \mathcal{Q}(F_0,F_1) + \mathcal{Q}(F_1,F_0), 
	  \label{eq2m}
	  \\
	  \partial_t F_1 + \phat \cdot \partial_{\barx} F_1 & 
	  =	\mathcal{Q}(F_0,F_2) + \mathcal{Q}(F_2,F_0) + \mathcal{Q}(F_1,F_1), 
	  \notag
	  \\
	  & \ \vdots 
	  \notag
	  \\
	  \partial_t F_5 + \phat \cdot \partial_{\barx} F_5 & = \mathcal{Q}(F_0,F_6) + \mathcal{Q}(F_6,F_0) + 
	  	\mathop{\sum_{i + j = 6}}_{1 \leq i, \hspace{.02 in}j \leq 6} \mathcal{Q}(F_i,F_j), 
		\notag
\end{align}
while the remainder satisfies the equation
\begin{multline}
\label{remainder}
\partial_t \effRE +  \phat \cdot \partial_{\barx} \effRE 
-
 \frac{1}{\varepsilon} \Big \lbrace 
\mathcal{Q}\big(F_0,\effRE\big) 
+ 
\mathcal{Q}\big(\effRE,F_0 \big) \Big \rbrace 
=
\varepsilon^{2} \mathcal{Q}(\effRE,\effRE)
\\
+ 
\sum_{i=1}^6 \varepsilon^{i-1}
\left\{
\mathcal{Q}(F_i, \effRE) 
+ 
\mathcal{Q}(\effRE,F_i) 
\right\}
+ 
\varepsilon^2 A, 
\end{multline}
with
$$			
A  \eqdef - \varepsilon \big \lbrace \partial_t F_6 + \phat \cdot \partial_{\barx} F_6 \big \rbrace +
			\mathop{\sum_{i + j > 6}}_{1 \leq i, \hspace{.02 in}j \leq 6} 
			\varepsilon^{i + j -6} \mathcal{Q}(F_i,F_j).
$$
By \eqref{E:MaxwellianCondition}, equation \eqref{eq1m} implies that $F_0$ must be a relativistic local Maxwellian $F_0(t,\barx,\barp) = \mathcal{M} = \mathcal{M}(n(t,\barx),\Temp(t,\barx),u(t,\barx);\barp),$ as in \eqref{E:Maxwelliandef}. Consequently, the remaining equations in \eqref{eq2m} and below involve the linear operator: 
\begin{gather}
\notag
L(h) 
\eqdef
- 
\left\{
\mathcal{Q}\left( h,\mathcal{M} \right)
+
\mathcal{Q}\left(\mathcal{M}, h \right)
\right\}.
\end{gather}
We remark that $L$ is an integral operator involving only the momentum space variables.  Furthermore, $L$ is a linear 
Fredholm operator that can be inverted as long as the inhomogeneity (i.e., the terms in \eqref{eq2m}, and the equations below it, which are not of the form $L (F_i)$) 
is perpendicular to the five dimensional null space of the adjoint operator $L^\dagger:$
$\mbox{Null}(L^\dagger) = \mbox{span}\{\phi_1, \ldots , \phi_5\}
= \mbox{span} \{1, \barp , P^0\}.$ This null space can be seen easily from the standard pre-post change of variables. 
In the preceding discussion, the notion of perpendicular and adjoint is the one corresponding to the usual $L^2$ momentum space inner product defined in \eqref{L2inner}. The operator $L$ has the null space $\mbox{Null}(L) = 
\mbox{span} \{\mathcal{M}, \mathcal{M} \barp, \mathcal{M} P^0\}$.

The aforementioned perpendicularity conditions can be checked by direct calculation, 
so that equation \eqref{eq2m} and the one below it imply that
\begin{align}
	 & \left\langle \phi_i,   \partial_t F_0 + \phat \cdot \partial_{\barx} F_0 \right\rangle_{\barp}
%	 =
%	 \left( \partial_{\kappa} I_{fluid}^{\kappa}, \partial_{\kappa} T_{fluid}^{\mu \kappa} \right)
	 =0, \quad (i=1,\cdots,5), 
	  \label{eq2m22}
	  \\
	  F_1 & = - L^{-1}\left( \partial_t F_0 + \phat \cdot \partial_{\barx} F_0 \right) + \Phi_1,
	  \notag
	  \\
	  	&\left\langle \phi_i,   \partial_t F_1 + \phat \cdot \partial_{\barx} F_1 - 
	  		\mathcal{Q}(F_1,F_1)\right\rangle_{\barp}
	  	=0, \quad (i=1,\cdots,5),
		\label{eq2m33}
	  \\
  F_2 & = - L^{-1}\left( \partial_t F_1 + \phat \cdot \partial_{\barx} F_1 - \mathcal{Q}(F_1,F_1) \right) + \Phi_2.
	  \notag
\end{align}
Above $\Phi_1$ and $\Phi_2$ are elements of the null space of $L$ (i.e., $L (\Phi_1) = L (\Phi_2)=0$).
Applying the operator $\partial_{\mu}$ to each side of \eqref{E:TBoltzmanndef}, differentiating under the integral,
and using \eqref{E:TBoltzMaxwellian}, it follows that equation \eqref{eq2m22} implies that the relativistic Euler equations \eqref{E:nuconservationlawintro} are verified by the macroscopic quantities $n[\mathcal{M}],$ 
$\Temp[\mathcal{M}],$ and $u[\mathcal{M}]$ corresponding to the Maxwellian $\mathcal{M} = F_0$. 
\emph{This explains the fact that in order to initiate the Hilbert expansion, we solve (with the help of Theorem \ref{MainThmEuler}) for a smooth solution to the rE system using the variables $(n, \Temp, u)$.} 

As discussed in Cercignani-Kremer \cite[Section 5.5]{MR1898707}, the parameters in the expansion of $\Phi_1$ in terms of the basis $\lbrace \mathcal{M},$ $\mathcal{M}P^0,$ $\mathcal{M}P^1$, $\mathcal{M}P^2,$ $\mathcal{M}P^3 \rbrace$ satisfy a linearized inhomogeneous version of the relativistic Euler equations; with the help of the expression for $F_1$ in \eqref{eq2m22}, this enables us to find a solution $F_1$ to equation \eqref{eq2m}. Furthermore, the higher order correction terms $F_2, F_3, \ldots, F_6$ can be solved for in the same way, where the corresponding inhomogeneous terms in the linearized relativistic Euler equations depend upon the previous terms in the expansion. We refer to Cercignani-Kremer \cite[Section 5.5]{MR1898707} for more details on these terms in the expansion. We remark that a careful treatment of the non-relativistic Hilbert expansion is found in \cite{MR0135535,MR0156656,MR586416}. In particular, we are using the argument from \cite{MR586416}. These arguments carry over directly (once one identifies the null spaces in the relativistic case, as we have done above).

We will use the following results, which are not studied in detail here. First, the terms $F_1,\ldots,F_6$ are smooth in $(t,\barx),$ and they also have decay in the momentum variables, $\barp$. Consider, for example, $F_1$. For the Newtonian version of $L$, Grad \cite{MR0156656} and Caflisch \cite{MR586416} argue that $L^{-1}$ preserves decay in the momentum $\barp$. Their argument carries over directly to our case of the relativistic Boltzmann equation as follows. We can combine the argument in \cite{MR0156656,MR586416} with the relativistic estimates \eqref{hypNU}, Lemma \ref{boundK2}, Lemma \ref{boundKinfX}, and arguments as in \cite{MR2366140},
to see that indeed $L^{-1}$ preserves momentum decay.
This leads to the conclusion that $- L^{-1}\left( \partial_t F_0 + \phat \cdot \partial_{\barx} F_0 \right)$ decays at infinity as fast as $\mathcal{M}^q$ for any $0< q< 1$.
Thus, $F_1$ will decay as fast as $\mathcal{M}^q$, and $F_1$ is smooth in $(t,\barx)$ since the parameters in the expansion of $\Phi_1$ in terms of $\mathcal{M}$, $\mathcal{M}P^0$, $\mathcal{M}P^1$, $\mathcal{M}P^2$,
$\mathcal{M}P^3$ solve linear equations with forcing terms coming from the smooth functions $n, \Temp, u$. This argument is similar for the higher order terms in the expansion.

The next proposition summarizes the estimates that we use in the next section.

\begin{proposition}
\label{iterateBDS}
Let $\big(n(t,\barx), \Temp(t,\barx), u(t,\barx) \big)$ be a smooth solution (see Remark \ref{R:Regularity}) of the rE equations
\eqref{E:nuconservationlawintro} on a time interval $[0,T] \times \mathbb{R}^3_{\barx}$. Form the relativistic Maxwellian 
$F_0 = \mathcal{M}(n,\Temp,u;\barp)$ as in \eqref{E:Maxwelliandef}. Then the terms $F_1, \ldots, F_6$ of the Hilbert expansion are smooth in $(t,x)\in [0,T] \times \mathbb{R}^3_{\barx}$ and for any $0< q< 1,$ they have momentum decay given by
$$
\left| F_j(t,\barx,\barp) \right| 
\le
 C(q) \mathcal{M}^q(n(t,\barx),\Temp(t,\barx),u(t,\barx);\barp), \quad (j=1,2,\cdots,6).
$$
The constants in this bound are independent of $(t,\barx,\barp)$.
\end{proposition}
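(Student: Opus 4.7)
The plan is to proceed by induction on $k \in \{1,\ldots,6\}$, using the structural decomposition
\begin{align*}
F_k = -L^{-1}(\mathcal{I}_k) + \Phi_k,
\end{align*}
where $\mathcal{I}_k$ is the inhomogeneity built from $F_0, \ldots, F_{k-1}$ and their $(t,\barx)$ derivatives (schematically, $\partial_t F_{k-1} + \widehat{P} \cdot \partial_{\barx} F_{k-1}$ minus the quadratic terms $\sum_{i+j=k,\,i,j\geq 1} \mathcal{Q}(F_i,F_j)$), and $\Phi_k \in \operatorname{Null}(L) = \operatorname{span}\{\mathcal{M}, \mathcal{M}P^0, \mathcal{M}P^1, \mathcal{M}P^2, \mathcal{M}P^3\}$ is a correction. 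The base case is $F_0 = \mathcal{M}$, which has the claimed decay trivially (in fact with $q = 1$), and its smoothness in $(t,\barx)$ follows from the hypothesized regularity of $(n,\theta,u)$ via the explicit formula \eqref{E:Maxwelliandef}.

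For the inductive step, the key analytic input is that the pseudoinverse $L^{-1}$, restricted to the orthogonal complement of $\operatorname{Null}(L^\dagger) = \operatorname{span}\{1, \barp, P^0\}$, preserves momentum decay of the form $\mathcal{M}^q$ for every $0 < q < 1$. I would establish this by adapting the Grad--Caflisch argument (see \cite{MR0156656,MR586416}) to the relativistic setting: decompose $L = \nu(\barp)\,\mathrm{Id} - K$, where $\nu$ is the collision frequency controlled by \eqref{hypNU}, and $K$ is a compact integral operator. Using the relativistic kernel estimates from Lemma \ref{boundK2} and Lemma \ref{boundKinfX} (together with the bound \eqref{juttnerB} relating $\mathcal{M}$ to powers of the global Maxwellian $J$), one checks that $K$ maps functions decaying like $\mathcal{M}^q$ into functions decaying like $\mathcal{M}^{q'}$ for any $q < q' < 1$, which combined with the spectral gap of $L$ on the orthogonal complement yields the stated $\mathcal{M}^q$ decay for $L^{-1}\mathcal{I}_k$. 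The smoothness in $(t,\barx)$ passes through because $L$ acts only in momentum and the inhomogeneity $\mathcal{I}_k$ is smooth in $(t,\barx)$ by the inductive hypothesis.

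It remains to determine $\Phi_k$. Writing $\Phi_k = \mathcal{M}\bigl(a_k + b_k \cdot \barp + c_k P^0\bigr)$ with scalar coefficients $a_k(t,\barx)$, $c_k(t,\barx)$ and vector coefficient $b_k(t,\barx)$, the next-order solvability condition (the analogue of \eqref{eq2m22}--\eqref{eq2m33}) demands that the five moments $\langle \phi_i, \partial_t F_k + \widehat{P}\cdot\partial_{\barx} F_k - \sum_{i+j = k+1,\,i,j\geq 1}\mathcal{Q}(F_i,F_j)\rangle_{\barp} = 0$. Expanding these moments, the principal part in $(a_k, b_k, c_k)$ reproduces the linearization of the relativistic Euler system around $(n,\theta,u)$, while the lower-order terms are smooth known functions of $(t,\barx)$ coming from $F_0,\ldots,F_{k-1}$ and the particular solution $-L^{-1}\mathcal{I}_k$. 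As explained in Cercignani--Kremer \cite[Section 5.5]{MR1898707}, this linearized inhomogeneous rE system is symmetric hyperbolic (the linearization of a symmetric hyperbolic system retains the symmetrizer), so standard energy estimates combined with the smoothness of the background and forcing produce smooth $(a_k, b_k, c_k)$ on $[0,T]\times \mathbb{R}^3$. The $\barp$-decay of $\Phi_k$ is automatic: it inherits the decay of $\mathcal{M}$ up to polynomial factors in $P^0$, which is absorbed into any $\mathcal{M}^q$ with $q < 1$.

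The main obstacle is the first step: verifying that $L^{-1}$ preserves $\mathcal{M}^q$ decay in the \emph{relativistic} setting. Unlike the Newtonian case, the kinematic factor $\widehat{P}$ is bounded (by $c$) and the kernel of the linearized collision operator has a quite different structure controlled by the M\o ller velocity \eqref{moller} and the cross-section hypothesis. The needed decay-preservation is not quite off-the-shelf; it requires carefully combining the hypothesis \eqref{hypNU} on $\nu(\barp)$, the compactness/smoothing estimates on $K$ referenced in Section \ref{SS:rBestimates}, and the upper bound of \eqref{juttnerB} to trade $\mathcal{M}^q$ against $J^{\alpha q}$ when invoking the Newtonian-style weighted estimates. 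Once this technical point is in place, the induction closes cleanly, and the constant $C(q)$ in the final bound is uniform in $(t,\barx,\barp)$ because both the Euler solution and its derivatives are uniformly controlled on $[0,T]\times\mathbb{R}^3_{\barx}$.
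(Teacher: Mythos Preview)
Your proposal is correct and follows essentially the same approach as the paper: the inductive decomposition $F_k = -L^{-1}(\mathcal{I}_k) + \Phi_k$, the adaptation of the Grad--Caflisch decay-preservation argument for $L^{-1}$ via the splitting $L = \nu - K$ together with the relativistic kernel estimates of Lemmas \ref{boundK2} and \ref{boundKinfX} and \eqref{hypNU}, and the determination of the null-space coefficients $(a_k,b_k,c_k)$ through the linearized inhomogeneous Euler system as in Cercignani--Kremer \cite[Section 5.5]{MR1898707}. In fact your writeup is more explicit than the paper's own discussion, which simply sketches this outline in the paragraph preceding the proposition and defers the details to \cite{MR0156656,MR586416,MR2366140}.
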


\subsection{Relativistic Boltzmann estimates} \label{SS:rBestimates}

In this section we prove our main result, Theorem \ref{MainThm}.
Using Theorem \ref{MainThmEuler}, we may assume that there is a sufficiently smooth solution $(n,\Temp,u)$
to the relativistic Euler equations satisfying all of the desired properties in Theorem \ref{MainThm}. 
We can then construct the local relativistic Maxwellian $\mathcal{M}(n(t,\barx),\Temp(t,\barx),u(t,\barx);\barp)$
as in \eqref{E:Maxwelliandef}. After the analysis of Section \ref{SS:HilbertExpansion},
the main point left is to estimate solutions to the equation for the remainder \eqref{remainder}. 
We will outline the main strategy for these estimates after the statements of Lemma \ref{L2} and Lemma \ref{L0} below.

It will be useful to express the remainder as
\begin{equation}
f^\varepsilon
\eqdef
\effRE / \sqrt{\mathcal{M}} ~ .
\label{remR}
\end{equation}
We  use the notation $\| f \|_2 \eqdef \| f \|_{L^2(\mathbb{R}_{\barx}^3 \times \mathbb{R}_{\barp}^3)}$
throughout this section.  
We define the linearized relativistic Boltzmann collision operator around $\mathcal{M}$ by
\begin{gather}
\notag
\mathcal{L}(h) 
\eqdef
- 
\mathcal{M}^{-1/2}
\left\{
\mathcal{Q}\left(\sqrt{\mathcal{M}} h,\mathcal{M} \right)
+
\mathcal{Q}\left(\mathcal{M},\sqrt{\mathcal{M}} h \right)
\right\}.
\notag
\end{gather}
We also define a nonlinear operator by
\begin{gather}
\Gamma(h,f) 
\eqdef 
\mathcal{M}^{-1/2}
\mathcal{Q}\left(\sqrt{\mathcal{M}} h,\sqrt{\mathcal{M}} f \right).
\label{E:nonLinM}
\end{gather}
We recall the notation from Section \ref{SS:Notation}.

We further define the weighed $L^2(\mathbb{R}^3_{\barx}\times\mathbb{R}^3_{\barp})$ ``dissipation'' norm by 
$$
\| h \|_\nu^2 
\eqdef
\int_{\mathbb{R}^3_{\barx}}
d\barx 
\int_{\mathbb{R}^3_{\barp}}
d\barp
~\nu(\barp)~
|h(\barx,\barp)|^2.
$$
Above the the ``collision frequency'', 
$
\nu(\barp) 
\eqdef
\nu(J)(\barp),
$ 
is given by \eqref{hypNU}.  
Recall the weight function \eqref{weight}.
We will sometimes write
$
w \eqdef w(\barp) \eqdef w_1(\barp).
$
Furthermore,
\begin{equation}
h^\varepsilon
\eqdef
\effRE(t,\barx,\barp)  / \sqrt{J(\barp)}.
\label{remH}
\end{equation}
It will then be sufficient to estimate 
$
\| f^\varepsilon \|_2 (t)
$
and
$
\| h^\varepsilon \|_{\infty,\ell} (t)
$
to conclude Theorem \ref{MainThm}.  We prove the needed estimates in Lemma \ref{L2}
and Lemma \ref{L0} just below.

Let ${\bf P}$ denote the orthogonal $L^2(\mathbb{R}^3_{\barp})$ projection with respect to the null space of the linear operator $\mathcal{L}$,
which is
$$
\left\{ \sqrt{\mathcal{M}}, ~ \barp^1 \sqrt{\mathcal{M}}, ~ \barp^2 \sqrt{\mathcal{M}}, ~ \barp^3 \sqrt{\mathcal{M}}, ~ P^{0} \sqrt{\mathcal{M}} \right\}.
$$
We know from e.g. \cite{strainSOFT,MR1379589,MR1211782} 
that there exists a number $\delta_0>0$ such that
\begin{equation}
\langle \mathcal{L} h, h \rangle_{\barp}
\ge
\delta_0 
\| \{ {\bf I - P } \} h \|_\nu^2.
\label{lowerL}
\end{equation}
We will furthermore use the following $L^2$ - $L^\infty$ estimates.

\begin{lemma}
\label{L2}
($L^2$ Estimate):
We consider a smooth solution (see Remark \ref{R:Regularity}) $\big(n(t,\barx), \Temp(t,\barx), u(t,\barx)\big)$ to the relativistic Euler equations \eqref{E:nuconservationlawintro} generated by Theorem \ref{MainThmEuler}. Let $\mathcal{M}(n,\Temp,u;\barp),$
$f^\varepsilon$, $h^\varepsilon$ be defined in \eqref{E:Maxwelliandef}, \eqref{remR}, and \eqref{remH} respectively, 
and let $\delta_0 > 0$ be as in the coercivity estimate \eqref{lowerL}.
Then there exist constants $\varepsilon_0 > 0$ and $C = C(\mathcal{M} , F_0, F_1 , \ldots , F_6) > 0$,
such that for all $\varepsilon \in(0, \varepsilon_0)$ we have
\begin{equation*}
\frac{d}{dt} \| f^\varepsilon \|^2_2(t)
+
\frac{\delta_0}{2\varepsilon}
\| \{{\bf I-P}\}f^\varepsilon \|^2_\nu(t)
\le
C \{\sqrt{\varepsilon} \| \varepsilon^{3/2} h^\varepsilon \|_{\infty,\ell}(t) + 1\}
%\left\{ \| f^\varepsilon \|^2_2(t)+ 1 \right\}.
\left\{ \| f^\varepsilon \|^2_2 + \| f^\varepsilon \|_2 \right\}.
%\label{mainL2}
\end{equation*}
\end{lemma}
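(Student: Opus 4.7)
The approach is a standard $L^2$ energy estimate for $f^\varepsilon$ driven by the remainder equation \eqref{remainder}, following Caflisch's Hilbert expansion strategy adapted to the relativistic setting. Substituting $\effRE = \sqrt{\mathcal{M}}\,f^\varepsilon$ into \eqref{remainder} and dividing by $\sqrt{\mathcal{M}}$ produces
\begin{equation*}
(\partial_t + \phat\cdot\partial_{\barx})f^\varepsilon + \frac{1}{\varepsilon}\mathcal{L}(f^\varepsilon) = \varepsilon^2\,\Gamma(f^\varepsilon,f^\varepsilon) + \sum_{i=1}^6 \varepsilon^{i-1}\bigl\{\Gamma(\tfrac{F_i}{\sqrt{\mathcal{M}}},f^\varepsilon) + \Gamma(f^\varepsilon,\tfrac{F_i}{\sqrt{\mathcal{M}}})\bigr\} - \frac{(\partial_t + \phat\cdot\partial_{\barx})\sqrt{\mathcal{M}}}{\sqrt{\mathcal{M}}}\,f^\varepsilon + \frac{\varepsilon^2\,A}{\sqrt{\mathcal{M}}}.
\end{equation*}

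Taking the $L^2_{\barx;\barp}$ inner product with $f^\varepsilon$, the streaming part integrates to zero (since $\phat$ is $\barx$-independent), so the transport term produces exactly $\tfrac{1}{2}\tfrac{d}{dt}\|f^\varepsilon\|_2^2$. The linearized collision contribution, by \eqref{lowerL}, is bounded below by $\tfrac{\delta_0}{\varepsilon}\|\{\mathbf{I}-\mathbf{P}\}f^\varepsilon\|_\nu^2$. The ``product-rule'' coefficient $(\partial_t+\phat\cdot\partial_{\barx})\sqrt{\mathcal{M}}/\sqrt{\mathcal{M}}$ is smooth in $(t,\barx)$ and polynomially bounded in $\barp$; using \eqref{juttnerB} together with the uniform bounds on $(n,\Temp,u)$ from Theorem \ref{MainThmEuler}, its inner product with $(f^\varepsilon)^2$ is controlled by $C\|f^\varepsilon\|_2^2$. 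The inhomogeneity $\varepsilon^2 A/\sqrt{\mathcal{M}}$ is uniformly bounded in $L^2_{\barx;\barp}$ thanks to Proposition \ref{iterateBDS} (the factor $\mathcal{M}^{q-1/2}$ with $q$ close to $1$ combined with \eqref{juttnerB} is integrable), so by Cauchy--Schwarz this contributes at most $C\|f^\varepsilon\|_2$.

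For the bilinear $\Gamma$-terms one uses the standard relativistic Boltzmann bilinear estimates, which will give, for each $i \geq 1$,
\begin{equation*}
\bigl|\bigl\langle \Gamma(\tfrac{F_i}{\sqrt{\mathcal{M}}},f^\varepsilon) + \Gamma(f^\varepsilon,\tfrac{F_i}{\sqrt{\mathcal{M}}}),\, f^\varepsilon\bigr\rangle_{\barx;\barp}\bigr| \leq C\bigl(\|\{\mathbf{I}-\mathbf{P}\}f^\varepsilon\|_\nu^2 + \|f^\varepsilon\|_2^2\bigr),
\end{equation*}
where the decomposition exploits that the hydrodynamic part $\mathbf{P}f^\varepsilon$ is controlled in a lower-order norm while the microscopic part appears in the dissipative norm. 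For $i \geq 2$ the prefactor $\varepsilon^{i-1} \geq \varepsilon$ absorbs the whole right-hand side into $C\|f^\varepsilon\|_2^2$. For the pure nonlinear term $\varepsilon^2\,\Gamma(f^\varepsilon,f^\varepsilon)$, one places the $L^\infty$-weight on one factor: since $f^\varepsilon = \sqrt{J/\mathcal{M}}\,h^\varepsilon$ and $\sqrt{J/\mathcal{M}} \leq C$ by \eqref{juttnerB}, and since $|h^\varepsilon|\leq \|h^\varepsilon\|_{\infty,\ell}/w_\ell(\barp)$, the standard estimate yields
\begin{equation*}
\varepsilon^2\,\bigl|\langle \Gamma(f^\varepsilon,f^\varepsilon),\,f^\varepsilon\rangle_{\barx;\barp}\bigr| \leq C\,\varepsilon^2\,\|h^\varepsilon\|_{\infty,\ell}\,\|f^\varepsilon\|_\nu^2 = C\sqrt{\varepsilon}\,\|\varepsilon^{3/2}h^\varepsilon\|_{\infty,\ell}\,\|f^\varepsilon\|_\nu^2,
\end{equation*}
which reproduces the prefactor $\sqrt{\varepsilon}\|\varepsilon^{3/2}h^\varepsilon\|_{\infty,\ell}$ in the statement.

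The main obstacle is the $i=1$ contribution $\Gamma(F_1/\sqrt{\mathcal{M}},f^\varepsilon) + \Gamma(f^\varepsilon,F_1/\sqrt{\mathcal{M}})$, which carries no positive power of $\varepsilon$; a naive estimate would produce a term of size $C\|f^\varepsilon\|_\nu^2$ on the right, destroying the $\tfrac{\delta_0}{\varepsilon}$ dissipation. The resolution, as in Caflisch \cite{MR586416} and Guo--Jang--Jiang \cite{MR2472156}, is that this operator has the same five-dimensional null-space structure as $\mathcal{L}$, so its action on $\mathbf{P}f^\varepsilon$ contributes only $C\|f^\varepsilon\|_2^2$, while the microscopic part produces $C\|\{\mathbf{I}-\mathbf{P}\}f^\varepsilon\|_\nu^2$, which is absorbed via Young's inequality into half the coercive term. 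This degrades the coercivity constant from $\tfrac{\delta_0}{\varepsilon}$ to $\tfrac{\delta_0}{2\varepsilon}$, precisely as stated. Collecting all contributions and moving the dissipation to the left yields the claimed inequality.
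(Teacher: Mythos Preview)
Your overall architecture matches the paper's, but there is a genuine gap in your treatment of the streaming term
\[
\Bigl\langle \frac{(\partial_t + \phat\cdot\partial_{\barx})\sqrt{\mathcal{M}}}{\sqrt{\mathcal{M}}}\,f^\varepsilon,\, f^\varepsilon \Bigr\rangle_{\barx;\barp}.
\]
You assert that this is bounded by $C\|f^\varepsilon\|_2^2$, but the coefficient $(\partial_t + \phat\cdot\partial_{\barx})\sqrt{\mathcal{M}}/\sqrt{\mathcal{M}}$ grows \emph{linearly} in $P^0$; since $\nu(\barp)\approx (P^0)^{\beta/2}$ with $\beta<2$, this weight is not dominated by $\nu$, and certainly not by a constant. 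The paper handles this by a momentum splitting at level $w_1(\barp)=\kappa/\varepsilon^{\tilde a}$ (with $\tilde a = 2/(4-b)$): on the high-momentum set one trades $w_1$ for $w_{\ell}$ decay and passes to $h^\varepsilon$, producing a contribution $C_\kappa\sqrt{\varepsilon}\,\|\varepsilon^{3/2}h^\varepsilon\|_{\infty,\ell}\,\|f^\varepsilon\|_2$; on the low-momentum set one bounds $w_1\le \kappa^{1/\tilde a}\varepsilon^{-1}(P^0)^{b/2}$, giving $C\|f^\varepsilon\|_2^2 + C\kappa^{1/\tilde a}\varepsilon^{-1}\|\{\mathbf I-\mathbf P\}f^\varepsilon\|_\nu^2$, and the last term is absorbed by choosing $\kappa$ small. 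This splitting is not optional---without it the estimate simply fails---and it is one of the two sources of the $\sqrt{\varepsilon}\,\|\varepsilon^{3/2}h^\varepsilon\|_{\infty,\ell}$ prefactor in the statement (the other being the quadratic term).

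Two smaller points. First, for the quadratic term the paper uses the trilinear estimate \eqref{E:FirstEstimate}, which places the $L^\infty$ weight on the \emph{third} slot and yields $\sqrt{\varepsilon}\,\|\varepsilon^{3/2}h^\varepsilon\|_{\infty,\ell}\,\|f^\varepsilon\|_2^2$ directly; your version produces $\|f^\varepsilon\|_\nu^2$, which then has to be split again and partly absorbed, introducing an unnecessary smallness requirement on $\|\varepsilon^{3/2}h^\varepsilon\|_{\infty,\ell}$. Second, your discussion of the $i=1$ term overstates the difficulty: no special null-space property of $\Gamma(F_1/\sqrt{\mathcal{M}},\cdot)$ is invoked. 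The paper simply applies \eqref{E:SecondEstimate} to get $C\|f^\varepsilon\|_\nu^2$, writes $\|f^\varepsilon\|_\nu^2 \le \|\mathbf{P}f^\varepsilon\|_\nu^2 + \|\{\mathbf I-\mathbf P\}f^\varepsilon\|_\nu^2$, observes that $\|\mathbf{P}f^\varepsilon\|_\nu^2\lesssim \|f^\varepsilon\|_2^2$ because $\mathbf{P}f^\varepsilon$ is a finite combination of rapidly decaying functions, and then absorbs $C\|\{\mathbf I-\mathbf P\}f^\varepsilon\|_\nu^2$ (an $O(1)$ coefficient) into the coercive term $\delta_0\varepsilon^{-1}\|\{\mathbf I-\mathbf P\}f^\varepsilon\|_\nu^2$ by taking $\varepsilon\le \delta_0/(2C)$. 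The halving of the coercivity constant comes from this absorption together with the $\kappa$-absorption above, not from any structural cancellation.
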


Above and below the constant $C(\mathcal{M} , F_0, F_1 , \ldots , F_6)$ depends upon the $L^2$ norms and the $L^\infty$ norms of the terms $\mathcal{M}$, $F_0$, $F_1, \ldots, F_6$ as well as their first derivatives.

\begin{lemma}
\label{L0}
($L^\infty$ Estimate):  
Under the assumptions of Lemma \ref{L2},
there exists $\varepsilon_0 > 0$ and a positive constant 
$C = C(\mathcal{M} , F_0, F_1 , \ldots , F_6) > 0$,
such that for all $\varepsilon \in(0, \varepsilon_0)$ and for any $\ell \ge 9$ we have
\begin{equation*}
\sup_{0\le s \le T}  \| \varepsilon^{3/2} h^\varepsilon \|_{\infty,\ell} (s) 
\le
C\left\{\| \varepsilon^{3/2} h_0 \|_{\infty,\ell} + \sup_{0\le s \le T} \| f^\varepsilon  \|_2(s) 
+\varepsilon^{7/2}\right\}.
%\label{mainL0}
\end{equation*}
\end{lemma}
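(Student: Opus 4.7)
The plan is to close a weighted $L^\infty$ bound on $h^\varepsilon = \effRE/\sqrt{J}$ by running Duhamel's formula along characteristics and iterating it once in order to reduce the compact part of the linearized collision operator to an $L^2$ quantity, following the strategy of Caflisch, Guo, and Guo--Jang--Jiang in the Newtonian case \cite{MR586416,G2,MR2472156} together with the relativistic estimates of \cite{strainSOFT}. First I would divide \eqref{remainder} by $\sqrt{J(\barp)}$; by \eqref{juttnerB} and \eqref{hypNU}, the contribution of the linearized operator about $F_0=\mathcal{M}$ splits as $\varepsilon^{-1}\bigl(\tilde\nu(\barp)h^\varepsilon - \tilde K h^\varepsilon\bigr)$, with positive multiplier $\tilde\nu(\barp) \gtrsim (P^0)^{\beta/2}$ and an integral operator $\tilde K$ whose Grad--type kernel $\tilde k(\barp,\barp_*)$ is controlled as in Lemma \ref{boundKinfX}. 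Multiplying by the weight $w_\ell$ and integrating along the characteristic $X(s)\eqdef \barx-\phat(t-s)$ yields the Duhamel representation
\begin{equation*}
(w_\ell h^\varepsilon)(t,\barx,\barp) = e^{-\tilde\nu t/\varepsilon}(w_\ell h_0^\varepsilon)(\barx-\phat t,\barp) + \int_0^t \frac{e^{-\tilde\nu(t-s)/\varepsilon}}{\varepsilon}\bigl[w_\ell \tilde K h^\varepsilon + \varepsilon\,w_\ell S^\varepsilon\bigr](s,X(s),\barp)\,ds,
\end{equation*}
where $S^\varepsilon$ gathers the mixed linear terms $\sum_{i=1}^6 \varepsilon^{i-1}J^{-1/2}\{\mathcal{Q}(F_i,\sqrt{J}h^\varepsilon)+\mathcal{Q}(\sqrt{J}h^\varepsilon,F_i)\}$, the quadratic piece $\varepsilon^2 J^{-1/2}\mathcal{Q}(\sqrt{J}h^\varepsilon,\sqrt{J}h^\varepsilon)$, and the forcing $\varepsilon^2 A/\sqrt{J}$.

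The $S^\varepsilon$-contribution is elementary: Proposition \ref{iterateBDS} (for the momentum decay of $F_i$ and of $A$), combined with the bilinear estimate of Lemma \ref{newNONlin}, bounds this part of the integral by $C\varepsilon\|w_\ell h^\varepsilon\|_\infty + C\varepsilon^3\|w_\ell h^\varepsilon\|_\infty^2 + C\varepsilon^3$, after the exponential furnishes a time integral of size $\varepsilon/\tilde\nu$ and the choice $\ell\ge 9$ allows $w_\ell$ to absorb the $(P^0)^{\beta/2}$-loss from the collision frequency. The troublesome piece is $\varepsilon^{-1}\int_0^t e^{-\tilde\nu(t-s)/\varepsilon}\,w_\ell \tilde K h^\varepsilon\,ds$, which I would handle by substituting the Duhamel representation for $h^\varepsilon(s,X(s),\barp_*)$ back into the kernel integral $\tilde K h^\varepsilon(s,X(s),\barp)=\int \tilde k(\barp,\barp_*) h^\varepsilon(s,X(s),\barp_*)\,d\barp_*$. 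The initial-data and $S^\varepsilon$-contributions that this substitution generates are controlled just as above; the remaining doubly-compact term has the schematic form
\begin{equation*}
\frac{1}{\varepsilon^2}\int_0^t\!\!\int_0^s\!\!\int\!\!\int e^{-\tilde\nu(\barp)(t-s)/\varepsilon}e^{-\tilde\nu(\barp_*)(s-\tau)/\varepsilon}\,\mathcal{K}_w(\barp,\barp_*,\barp_{**})\,(w_\ell h^\varepsilon)(\tau,Y,\barp_{**})\,d\barp_{**}\,d\barp_*\,d\tau\,ds,
\end{equation*}
with $Y = \barx-\phat(t-s)-\phat_*(s-\tau)$ and $\mathcal{K}_w$ a weighted product of the two kernels.

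At this stage I would change variables $\barp_*\mapsto Y$ at fixed $\barp,\barp_{**},s,\tau$; the Jacobian factors as $(s-\tau)^3$ times $|\det(\partial\phat_*/\partial\barp_*)|$, which is non-degenerate on $\{|s-\tau|\ge \kappa\varepsilon\}\cap\{|\barp_*|\le N\}$, the complementary regions being absorbed by choosing $\kappa$ and $\varepsilon_0$ small and by exploiting the weight together with the kernel decay. Cauchy--Schwarz in $(Y,\barp_{**})$, combined with the equivalence of $J$ and $\mathcal{M}$ on bounded momentum sets furnished by \eqref{juttnerB}, converts the remaining $L^2$ factor into $\sup_\tau\|f^\varepsilon(\tau)\|_2$. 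Putting the pieces together and multiplying through by $\varepsilon^{3/2}$ gives an inequality of the schematic form
\begin{equation*}
\sup_{0\le t\le T}\|\varepsilon^{3/2}h^\varepsilon\|_{\infty,\ell}(t) \le C\|\varepsilon^{3/2}h_0\|_{\infty,\ell} + C\sup_{0\le \tau\le T}\|f^\varepsilon(\tau)\|_2 + C\varepsilon^{7/2} + C\varepsilon\sup\|\varepsilon^{3/2}h^\varepsilon\|_{\infty,\ell} + C\varepsilon^{1/2}\bigl(\sup\|\varepsilon^{3/2}h^\varepsilon\|_{\infty,\ell}\bigr)^2,
\end{equation*}
from which the lemma follows by a standard bootstrap/absorption argument once $\varepsilon_0$ is small. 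The main obstacle is the change-of-variables step: unlike in the Newtonian case, $\phat$ is bounded and $\partial\phat/\partial\barp$ degenerates as $|\barp|\to\infty$, so one genuinely needs to separate large- and small-momentum regions and lean on the weighted Grad-type decay estimates of \cite{strainSOFT,strainNEWT}; this degeneracy is also the origin of the threshold $\ell\ge 9$.
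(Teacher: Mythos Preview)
Your overall architecture matches the paper's proof: write the equation for $h^\varepsilon$, run Duhamel along characteristics, estimate the source and nonlinear terms pointwise, and iterate Duhamel once inside the compact part of the linearized operator so that the doubly-compact remainder can be traded for $\sup_s\|f^\varepsilon(s)\|_2$, after which a smallness/absorption argument closes. Two technical points, however, need correcting.

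First, the bilinear source terms are not controlled by Lemma~\ref{newNONlin}; that lemma is an $L^2$-pairing estimate and gives the wrong shape here. What you need is the pointwise weighted bound of Lemma~\ref{sSOFT}, namely $\bigl|w_\ell J^{-1/2}\mathcal{Q}(h_1\sqrt{J},h_2\sqrt{J})\bigr|\lesssim \nu(\barp)\|h_1\|_{\infty,\ell}\|h_2\|_{\infty,\ell}$, which is what produces the factors $\varepsilon\|h^\varepsilon\|_{\infty,\ell}$ and $\varepsilon^3\|h^\varepsilon\|_{\infty,\ell}^2$ after integrating $\nu e^{-\nu(t-s)/\varepsilon}$ in $s$.

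Second, you cannot treat all of $K$ as a single operator with a Grad-type kernel controlled by Lemma~\ref{boundKinfX}. The paper (following \cite{strainSOFT}) splits $K=K^{1-\chi}+K^\chi$ via a cutoff in the relative momentum $\varrho$: the piece $K^{1-\chi}$ is the one handled by Lemma~\ref{boundKinfX} (it is small, of size $\eta$, and absorbed directly), while only $K^\chi$ has the Hilbert--Schmidt kernel $k^\chi(\barp,\barq)$ of Lemma~\ref{boundK2} and is the piece one iterates. Without this splitting the kernel bounds you invoke are not available.

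Finally, for the doubly-compact term the paper does not redo the $\barp_*\mapsto Y$ change of variables in-line; it quotes \cite[Lemma~4.4]{strainSOFT} directly to obtain $|w_\ell I_{\gamma,\gamma}|\le \eta'\sup_s\|h^\varepsilon\|_{\infty,\ell}+C_{\eta'}\sup_s\|f^\varepsilon\|_2$. Your sketch of that step (Jacobian of $\phat_*$, large/small momentum and short-time excisions, then Cauchy--Schwarz) is correct in spirit and is precisely what underlies that cited lemma, so either route is fine; just be aware that the relativistic degeneracy you flag is already packaged there. With Lemmas~\ref{sSOFT}, \ref{boundKinfX}, and \ref{boundK2} in the right places, the collected inequality and the absorption step are exactly as in the paper (your quadratic coefficient should be $\varepsilon^{3/2}$ rather than $\varepsilon^{1/2}$, but any positive power suffices).
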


As we will soon explain, these two lemmas together imply Theorem \ref{MainThm}. 
These estimates are motivated by the $L^2-L^\infty$ framework from \cite{G2}.  Similar lemmas have also been used to study the non-relativistic Hilbert expansion in \cite{MR2472156}.  Indeed, the short proof of Theorem \ref{MainThm} is 
essentially extracted from \cite{MR2472156}, modulo these lemmas.  

The main strategy is as follows. We first control the remainder equation \eqref{remainder} as in Lemma \ref{L2} using the $L^2$ energy estimates from \cite{strainSOFT} such as Lemma \ref{newNONlin} below.  To finish the proof of Theorem \ref{MainThm}, we also need $L^\infty$ estimates such as those in Lemma \ref{L0}.  These can be proven using  Duhamel's principle \eqref{duhamel} and further estimates from \cite{strainSOFT} such as Lemmas \ref{boundK2}, \ref{boundKinfX}, and \ref{sSOFT}  below. In this framework, the key idea is to control the solution in $L^\infty$ by the $L^2$ norms of the solution and the $L^\infty$ norm of the initial data. \\

\noindent {\it Proof of Theorem \ref{MainThm}}.
We will use the main estimates in Lemma \ref{L2} and Lemma \ref{L0}.  After applying the standard
 Gronwall inequality to
the differential inequality in Lemma \ref{L2},  we obtain
\begin{gather*}
\| f^\varepsilon \|^2_2(t)+1
\le 
C\left( \| f^\varepsilon \|^2_2(0)+1 \right)
e^{C  t  a(T,\varepsilon)},
\end{gather*}
where
$$
a(T,\varepsilon)
\eqdef
\sqrt{\varepsilon}  \sup_{0\le t \le T} \| \varepsilon^{3/2} h^\varepsilon \|_{\infty,\ell}(t) + 1.
 $$
 By Lemma
 \ref{L0},
 $
 a(T,\varepsilon)
\lesssim
 b(T,\varepsilon)
 $
 with
 $$
b(T,\varepsilon)
\eqdef
%\left\{
\sqrt{\varepsilon}
\left( \| \varepsilon^{3/2} h_0 \|_{\infty,\ell}
+
 \sup_{0\le s \le T} \| f^\varepsilon (s) \|_2
+\varepsilon^{7/2}
\right)
 + 1.
 %\right\}.
 $$
We have shown
$
\| f^\varepsilon \|_2(t)
\le 
C\left( \| f_0 \|_2+1 \right)
e^{C t b(T,\varepsilon)}.
$
Notice that 
$$
e^x \le C (1+x), 
\quad
\text{if}
\quad 
0\le x\le 1.
$$
Therefore, for $\varepsilon$ sufficiently small, on some short time interval, we have
\begin{gather*}
\| f^\varepsilon \|_2(t)
\le 
C \left( \| f_0 \|_2+1 \right)
\left\{1+ \sqrt{\varepsilon}
\left( \| \varepsilon^{3/2} h_0 \|_{\infty,\ell}
+
 \sup_{0\le s \le T} \| f^\varepsilon (s) \|_2
\right)\right\}.
\end{gather*}
Hence, there exists $\varepsilon_0>0$ such that for $0\le \varepsilon \le \varepsilon_0$ we may conclude
\begin{equation*}
\sup_{0\le s \le T} \| f^\varepsilon (s) \|_2
\le 
C_T \{1+\| f_0 \|_2+\| \varepsilon^{3/2} h_0 \|_{\infty,\ell} \}.
\end{equation*}
This procedure works for some short time interval, and then the inequality above follows in general by a continuity argument.
This last estimate and Lemma \ref{L0} together imply the main estimate in Theorem \ref{MainThm}.  
\qed \\

For the remainder of this paper, we will discuss the proofs of the Lemmas \ref{L2} and \ref{L0}.
To this end, we will use the following nonlinear estimate.

\begin{lemma}
\label{newNONlin}
For any $\ell \ge 9,$ we have the following estimate for the collision operator \eqref{E:nonLinM}:
\begin{align}\label{E:FirstEstimate}
\left| \langle \Gamma(h_1, h_2), h_3 \rangle_{\barp} \right|
\le
C \|  h_3 \|_{\infty,\ell} \| h_2 \|_{2} \| h_1 \|_{2}.
\end{align}
Furthermore, if $\chi$ is any rapidly decaying function, then we have
\begin{align} \label{E:SecondEstimate}
\left| \langle \Gamma(h_1, \chi), h_3 \rangle_{\barp} \right|
+
\left| \langle \Gamma(\chi,h_1), h_3 \rangle_{\barp} \right|
\le
 C  \| h_3 \|_{\nu} \| h_1 \|_{\nu}.
\end{align}
\end{lemma}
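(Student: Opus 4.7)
My plan for Lemma \ref{newNONlin} is to reduce the bilinear form $\Gamma$, which is weighted by the local Maxwellian $\mathcal{M}$, to the analogous bilinear form weighted by the global Maxwellian $J$, for which the second author has already established the relevant estimates in \cite{strainSOFT}. The key algebraic observation is that whenever the collisional conservation laws \eqref{collisionalCONSERVATION} hold, the Maxwellian identity $\mathcal{M}(\barp)\mathcal{M}(\barq) = \mathcal{M}(\barp')\mathcal{M}(\barq')$ is satisfied, so that using the representation \eqref{collisionCM} for $\mathcal{Q}$ and cancelling $\mathcal{M}^{-1/2}(\barp)$ against $\sqrt{\mathcal{M}(\barp')\mathcal{M}(\barq')}/\sqrt{\mathcal{M}(\barq)}$ gives the compact form
\begin{equation*}
\Gamma(h_1,h_2)(\barp) = \int_{\mathbb{R}^3}d\barq \int_{\mathbb{S}^2}d\omega\, v_{\o}\,\sigma(\varrho,\vartheta)\,\sqrt{\mathcal{M}(\barq)}\,[h_1(\barp')h_2(\barq')-h_1(\barp)h_2(\barq)].
\end{equation*}
Now the assumption \eqref{juttnerB} yields $\sqrt{\mathcal{M}(\barq)} \le C \sqrt{J(\barq)} \cdot J^{(\alpha-1)/2}(\barq)$. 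Since $\alpha \in (1/2,1)$, the factor $J^{(\alpha-1)/2}$ contributes extra exponential decay in $\barq$ that is more than enough to absorb any discrepancy coming from the fact that $\mathcal{M}$ is not a global Maxwellian.

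For \eqref{E:FirstEstimate}, I would pull $h_3$ out through the pointwise bound
\begin{equation*}
|\langle \Gamma(h_1,h_2),h_3\rangle_{\barp}| \le \|h_3\|_{\infty,\ell}\int_{\mathbb{R}^3_{\barp}}w_\ell^{-1}(\barp)|\Gamma(h_1,h_2)(\barp)|\,d\barp,
\end{equation*}
then split into gain and loss terms. For the loss term, bound the inner momentum integral by $C\nu(\barp)^{1/2}|h_2|_2$ via Cauchy-Schwarz in $\barq$ (using the exponential decay of $\sqrt{\mathcal{M}(\barq)}$ against the polynomial growth of $v_\omega\sigma$ permitted by the collision kernel hypotheses), then apply Cauchy-Schwarz in $\barp$; the weight $w_\ell^{-1}$ with $\ell\ge 9$ is large enough to make $w_\ell^{-1}\nu^{1/2}$ square integrable, given the upper bound $\nu(\barp)\lesssim (P^0)^{\beta/2}$ from \eqref{hypNU}. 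For the gain term, perform the standard relativistic pre-post collisional change of variables $(\barp,\barq)\leftrightarrow(\barp',\barq')$, which preserves the measure $d\barp\,d\barq\,d\omega / (P^0 Q^0)$; this reduces the estimate to the same type as the loss term. The precise bilinear estimates needed are essentially those proved as Lemma \ref{newNONlin}'s analog for $\Gamma_J$ in \cite{strainSOFT}.

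For \eqref{E:SecondEstimate}, the rapidly decaying $\chi$ turns the bilinear form into an integral operator against $h_1$ with a smoothing kernel that decays exponentially in both momentum variables (rapid decay in one variable is transmitted to both through the collision geometry of $P,Q,P',Q'$). Bound the resulting momentum-space double integral by $C\|h_1\|_\nu \|h_3\|_\nu$ using Cauchy-Schwarz together with \eqref{hypNU}; both the gain and loss terms admit this treatment. The main obstacle I expect is the careful bookkeeping of the exponential weights so as to transfer the $\Gamma_J$ estimates of \cite{strainSOFT} to $\Gamma$: one must verify that the pointwise loss $\mathcal{M}^{-1/2}(\barp)\le C J^{-1/2}(\barp)$ coming from the lower bound in \eqref{juttnerB} is always paired with a compensating factor (such as $\sqrt{\mathcal{M}(\barq)}$ or $\chi$) providing at least $J^{1/2+\delta}$ decay. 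The hypothesis $\alpha > 1/2$ is precisely what guarantees this margin; it is also what forces the choice $\ell \ge 9$, large enough for the polynomially weighted momentum integrals appearing in the loss term to converge.
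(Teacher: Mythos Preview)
Your approach is correct and essentially coincides with the paper's. Both begin from the compact representation
\[
\Gamma(h_1,h_2)(\barp)=\int_{\mathbb{R}^3\times\mathbb{S}^2} v_{\o}\,\sigma\,\sqrt{\mathcal{M}(\barq)}\,[h_1(\barp')h_2(\barq')-h_1(\barp)h_2(\barq)]\,d\barq\,d\omega,
\]
then invoke \eqref{juttnerB} to replace $\sqrt{\mathcal{M}(\barq)}$ by $Ce^{-\alpha Q^0}$, pull out $\|h_3\|_{\infty,\ell}$, and finish with Cauchy--Schwarz plus the pre-post change of variables for the gain part.

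Two small points where the paper's execution is a bit tighter than your sketch. First, for the loss term the paper applies Cauchy--Schwarz \emph{symmetrically} in $(\barp,\barq)$ (using the $(\barp,\barq)$ interchange), yielding a product of two identical-type integrals; your iterated Cauchy--Schwarz in $\barq$ then $\barp$ does work, but the claim that the inner integral is bounded by $C\nu(\barp)^{1/2}|h_2|_2$ is not literally true---a mild $\barp$-dependent polynomial factor survives, which is then absorbed by $w_\ell^{-1}$ in the outer step. Second, for the gain term you say the change of variables ``reduces to the same type as the loss term,'' but after the swap the weights $w_\ell^{-1}(\barp)$ and $e^{-\alpha Q^0}$ sit on post-collisional variables; the paper closes this using $P^0\lesssim P'^0Q'^0$, $Q^0\lesssim P'^0Q'^0$ (the relativistic version of the standard kinematic inequalities), which is where the threshold $\ell\ge 9$ actually enters. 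Your final paragraph about ``bookkeeping $\mathcal{M}^{-1/2}(\barp)$'' is unnecessary once you have the compact form---that factor has already been cancelled exactly by the collisional invariance $\mathcal{M}(\barp)\mathcal{M}(\barq)=\mathcal{M}(\barp')\mathcal{M}(\barq')$.
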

In the proof of this lemma below, we only require that $\chi$ satisfies the rapid decay condition
$
\left| \chi(\barp)\right| \le C (P^0)^{-m},
$
 with $m > 3$. On the other hand, in our applications below,
we will consider smooth functions $\chi$ with exponential decay, which is a property possessed by the relativistic Maxwellians 
defined in \eqref{E:Maxwelliandef}.
\\

\noindent {\it Proof of Lemma \ref{newNONlin}}. We notice
from \eqref{E:nonLinM} and \eqref{juttnerB}
 that
\begin{multline*}
 \left| 
\Gamma(h_1,h_2) 
\right| 
\lesssim
\int_{\mathbb{R}^3}  d\barq ~
\int_{\mathbb{S}^{2}} d\omega ~
~ v_{\o} ~ \sigma (\varrho,\vartheta ) ~ 
e^{-\alpha Q^0}
\left| h_1(\barp')h_2(\barq') \right| 
\\
+ \int_{\mathbb{R}^3}  d\barq ~
\int_{\mathbb{S}^{2}} d\omega ~
~ v_{\o} ~ \sigma (\varrho,\vartheta ) ~ 
e^{-\alpha Q^0}
	\left| h_1(\barp)h_2(\barq) \right| \eqdef I + II.
\end{multline*}
We estimate first the piece without post-collisional velocities, denoted $II$ above:
\begin{gather*}
 \left| 
\langle II, h_3 \rangle_{\barp} 
\right| 
\lesssim
\int_{\mathbb{R}^3}  d\barp ~
\int_{\mathbb{R}^3}  d\barq ~
\int_{\mathbb{S}^{2}} d\omega ~
~ v_{\o} ~ \sigma (\varrho,\vartheta ) ~ 
e^{-\alpha Q^0}
\left| h_1(\barp)h_2(\barq) h_3(\barp) \right| 
\\
\lesssim
\|  h_3 \|_{\infty,\ell}
\int_{\mathbb{R}^3}  d\barp ~
\int_{\mathbb{R}^3}  d\barq ~
\int_{\mathbb{S}^{2}} d\omega ~
~ v_{\o} ~ \sigma (\varrho,\vartheta ) ~ 
\frac{\left| h_1(\barp)h_2(\barq) \right| }{(Q^0)^\ell (P^0)^\ell}.
\end{gather*}
Above, we made use of the trivial estimate
$
e^{-\alpha Q^0}
\lesssim
(Q^0)^{-\ell}.
$
Applying the Cauchy-Schwarz inequality, we conclude that
\begin{gather*}
 \left| 
\langle II, h_3 \rangle_{\barp} 
\right|
\lesssim
\|  h_3 \|_{\infty,\ell}
\prod_{i=1,2}\left(
\int_{\mathbb{R}^3}  d\barp ~
\int_{\mathbb{R}^3}  d\barq ~
\int_{\mathbb{S}^{2}} d\omega ~
~ v_{\o} ~ \sigma (\varrho,\vartheta ) ~ 
\frac{\left| h_i(\barp) \right|^2 }{(Q^0)^\ell (P^0)^\ell}
\right)^{1/2}.
\end{gather*}
We use the $(\barp,\barq)$ symmetry to interchange the values of $\barp$ and $\barq$ in the integral involving $h_2$ above.
Since $\ell \ge 9,$
we observe from our hypothesis above \eqref{hypNU} that 
\begin{gather*}
(P^0)^{-\ell}
\int_{\mathbb{R}^3}  d\barq ~
\int_{\mathbb{S}^{2}} d\omega ~
~ ~ 
\frac{v_{\o} ~ \sigma (\varrho,\vartheta ) }{(Q^0)^{\ell} }
\lesssim 1.
\end{gather*}
Estimates of this type are proven for instance in \cite[Lemma 3.1]{strainSOFT}.
>From here, the first estimate in Lemma \ref{newNONlin} follows for term $II$.
Similarly, for term $I$ we have
\begin{gather*}
 \left| 
\langle I, h_3 \rangle_{\barp} 
\right| 
\lesssim
\int_{\mathbb{R}^3}  d\barp ~
\int_{\mathbb{R}^3}  d\barq ~
\int_{\mathbb{S}^{2}} d\omega ~
~ v_{\o} ~ \sigma (\varrho,\vartheta ) ~ 
e^{-\alpha Q^0}
\left| h_1(\barp')h_2(\barq') h_3(\barp) \right| 
\\
\lesssim
\|  h_3 \|_{\infty,\ell}
\prod_{i=1,2}\left(
\int_{\mathbb{R}^3}  d\barp ~
\int_{\mathbb{R}^3}  d\barq ~
\int_{\mathbb{S}^{2}} d\omega ~
~ v_{\o} ~ \sigma (\varrho,\vartheta ) ~ 
\frac{\left| h_i(\barp') \right|^2 }{(Q^0)^{\ell} (P^0)^{\ell}}
\right)^{1/2}.
\end{gather*}
Above, we used the $(\barp',\barq')$ symmetry to interchange the values of $\barp'$ and $\barq'$ in the integral involving $h_2(\barp')$ above. By the pre-post collisional change of variables,
which is
$
d\barp d\barq = \frac{P'^{0} Q'^{0}}{P^0 Q^0} d\barp' d\barq', 
$ 
we have
\begin{multline*}
\int_{\mathbb{R}^3}  d\barp ~
\int_{\mathbb{R}^3}  d\barq ~
\int_{\mathbb{S}^{2}} d\omega ~
~ v_{\o} ~ \sigma (\varrho,\vartheta ) ~ 
\frac{\left| h_i(\barp') \right|^2 }{(Q^0)^{\ell} (P^0)^{\ell}}
\\
=
\int_{\mathbb{R}^3}  d\barp ~
\int_{\mathbb{R}^3}  d\barq ~
\int_{\mathbb{S}^{2}} d\omega ~
~ v_{\o} ~ \sigma (\varrho,\vartheta ) ~ 
\frac{\left| h_i(\barp) \right|^2 }{(Q'^0)^\ell (P'^0)^\ell}.
\end{multline*}
Above, we used the fact that that the kernel of the integral is invariant with respect to the relativistic 
pre-post collisional change of variables from \cite{MR1105532}.

Now it can be seen that $P^0 \lesssim Q'^0 P'^0$
and
$Q^0 \lesssim Q'^0 P'^0$.  This is the content of \cite[Lemma 2.2]{MR1211782}.  From here,
since $\ell \ge 9$
we observe that there exists a small $\delta >0$ such that
\begin{gather*}
\int_{\mathbb{R}^3}  d\barq ~
\int_{\mathbb{S}^{2}} d\omega ~
~ ~ 
\frac{v_{\o} ~ \sigma (\varrho,\vartheta ) }{(Q'^0)^\ell (P'^0)^\ell}
\le
C
(P^0)^{-\ell/2 +\delta}
\int_{\mathbb{R}^3}  d\barq ~
\int_{\mathbb{S}^{2}} d\omega ~
~ ~ 
\frac{v_{\o} ~ \sigma (\varrho,\vartheta ) }{(Q^0)^{\ell/2 +\delta}}
\lesssim 1.
\end{gather*}
This establishes \eqref{E:FirstEstimate}.

To prove \eqref{E:SecondEstimate}, we choose a cut-off function $\chi$ satisfying $\chi(\barp) \lesssim (P^0)^{-m}$ for any $m > 3.$ 
We will estimate the $I$ and $II$ terms from the top of this proof with $h_2 = \chi$ in the first case. 
In particular, as before, we have that
\begin{multline*}
 \left| 
\langle II, h_3 \rangle_{\barp} 
\right| 
\le 
C\int_{\mathbb{R}^3}  d\barp ~
\int_{\mathbb{R}^3}  d\barq ~
\int_{\mathbb{S}^{2}} d\omega ~
~ v_{\o} ~ \sigma (\varrho,\vartheta ) ~ 
e^{-\alpha Q^0}
\left| h_1(\barp) h_3(\barp) \right| 
\\
\lesssim
 \| h_3 \|_{\nu} \| h_1 \|_{\nu}.
\end{multline*}
We also have
\begin{gather*}
 \left| 
\langle I, h_3 \rangle_{\barp} 
\right| 
\le
C
\int_{\mathbb{R}^3}  d\barp ~
\int_{\mathbb{R}^3}  d\barq ~
\int_{\mathbb{S}^{2}} d\omega ~
~ v_{\o} ~ \sigma (\varrho,\vartheta ) ~ 
e^{-\alpha Q^0} (Q'^0)^{-m}
\left| h_1(\barp') h_3(\barp) \right| 
\\
\le
C \prod_{j\in{1,3}}
\left(
\int_{\mathbb{R}^3}  d\barp ~
\int_{\mathbb{R}^3}  d\barq ~
\int_{\mathbb{S}^{2}} d\omega ~
~ v_{\o} ~ \sigma (\varrho,\vartheta ) ~ 
(Q^0)^{-m} (Q'^0)^{-m}
\left| h_j(\barp') \right|^2 
\right)^{1/2},
\end{gather*}
where we have used similar reasoning as we did for the previous terms.
After the pre-post change of variables, as before, we conclude that
$
 \left| 
\langle I, h_3 \rangle_{\barp} 
\right| 
\le 
C
 \| h_3 \|_{\nu} \| h_1 \|_{\nu}.
 $ 
 as desired.
We have thus shown that 
$
\left| \langle \Gamma(h_1, \chi), h_3 \rangle_{\barp} \right|
\lesssim
 \| h_3 \|_{\nu} \| h_1 \|_{\nu}.
$
The last estimate involving the term $\Gamma(\chi, h_1)$ follows in exactly the same way.
\qed \\

With this lemma, we are ready for the \\

\noindent {\it Proof of Lemma \ref{L2}}.  Since $\effRE$ satisfies \eqref{remainder},
the  function $f^{\varepsilon }$ from \eqref{remR} satisfies the equation
\begin{multline*}
\partial _{t}f^{\varepsilon }+\phat\cdot\partial _{\barx}f^{\varepsilon }+\frac{1}{%
\varepsilon }{\mathcal{L}}(f^{\varepsilon}) 
=
\left(\frac{\{\partial _{t}+\phat\cdot \partial _{\barx}\}\sqrt{\mathcal{M} }}{\sqrt{\mathcal{M} }}\right)
f^{\varepsilon }
+
\varepsilon ^{2}\Gamma \left(f^{\varepsilon
},f^{\varepsilon
}\right)
\\
+
\sum_{i=1}^6\varepsilon^{i-1}
\left\{\Gamma \left({F_{i}}/{\sqrt{\mathcal{M} }}%
,f^{\varepsilon }\right)
+
\Gamma \left(f^{\varepsilon },{F_{i}}/{\sqrt{\mathcal{M}
}}\right)\right\}
+
\varepsilon ^{2}\bar{A},
\end{multline*}
where 
$$
\bar{A}=
- \varepsilon \mathcal{M}^{-1/2}{\{\partial _{t}+\phat\cdot \partial _{\barx}\}F_{6}}
+
\mathop{\sum_{i + j > 6}}_{1 \leq i, \hspace{.02 in}j \leq 6}
\varepsilon^{i+j-6}
\Gamma \left({F_{i}}/{\sqrt{\mathcal{M} }},{F_{j}}/{\sqrt{\mathcal{M} }}\right).
$$
We now take the $L^{2}$ inner product (in $\mathbb{R}_{\barx}^3 \times \mathbb{R}_{\barp}^3$) of this equation with $f^{\varepsilon}$. It follows from \eqref{lowerL} that
$$
\left\langle
\partial _{t}f^{\varepsilon }+\phat\cdot\partial _{\barx}f^{\varepsilon }+\frac{1}{%
\varepsilon }{\mathcal{L}} (f^{\varepsilon }),
f^{\varepsilon }
\right\rangle_{\barx;\barp}
\ge
\frac{1}{2}
\frac{d}{dt} \| f^{\varepsilon } \|_2^2
+
\frac{\delta_0}{\varepsilon} \|\{{\bf I-P}\} f^{\varepsilon } \|_\nu^2.
$$
In the remainder of the proof we will give upper bounds for the other terms.
We point out that
$
\mathcal{M}^{-1/2}
\{\partial _{t}+\phat\cdot \partial _{\barx}\}\sqrt{\mathcal{M}}
$
is a first order polynomial in $p$.  Furthermore,
$$
\left|
\frac{\{\partial _{t}+\phat\cdot \partial _{\barx}\}\sqrt{\mathcal{M} }}{\sqrt{\mathcal{M} }}
\right|
\le w_1(\barp) ~ C(n,u,\theta).
$$
Here $C=C(n,u,\theta)$ depends upon the the $L^{\infty}$ norm of the first derivatives of the fluid variables.  
Choose ${\tilde{a}}\eqdef 2/(4-b)$, where $b$ is any number subject to the constraints described just above \eqref{hypNU}.  
With that, for any $\kappa >0,$ we obtain
\begin{multline*}
\left\langle \left(\frac{\{\partial _{t}+\phat\cdot \partial _{\barx}\}\sqrt{\mathcal{M} }}{\sqrt{%
\mathcal{M} }}\right)f^{\varepsilon },f^{\varepsilon }\right\rangle_{\barx;\barp}
=
\int_{w_1(\barp)\geq \frac{\kappa }{{\varepsilon^{\tilde{a}} }}}~d\barx ~ d\barp
+
\int_{w_1(\barp)\leq \frac{\kappa }{{\varepsilon^{\tilde{a}} }}} ~d\barx ~ d\barp
\\
\leq 
C_2(n,u,\theta)
\| w_1 f^{\varepsilon }\|_{L^\infty({w(\barp)\geq
\frac{\kappa }{{\varepsilon^{\tilde{a}} }}}) }
\|f^{\varepsilon}\|_{L^2} 
%\\
+
C_\infty(n,u,\theta) \|\sqrt{w_1}f^{\varepsilon }
\|_{L^2({w(\barp)  \leq \frac{\kappa }{{\varepsilon^{\tilde{a}} }}})}^{2}.
\end{multline*}
Above, $C_2(n,u,\theta)$ denotes a constant which depends upon $L^2$ norms of the first derivatives of the fluid variables and the $L^\infty$ norms of the fluid variables alone.  This estimate follows from the Cauchy-Schwarz inequality.  The constant $C_\infty(n,u,\theta)$ also depends only on the $L^\infty$ norms of the fluid variables and their first-order derivatives.  
Furthermore, from \eqref{juttnerB}, \eqref{remH} and \eqref{remR}, we observe that
$$
|w_1(\barp)|
\left|  f^{\varepsilon }(\barp) \right|
\le
w_{1-\ell }(\barp)
\left| 
w_{\ell }(\barp)
 h^{\varepsilon }
\right|
\leq
\frac{\varepsilon^2}{\kappa^{2/{\tilde{a}}}}
\left| w_{\ell }(\barp) h^{\varepsilon } \right|,
\quad
\text{if}
\quad
w_1(\barp)\ge \frac{\kappa}{\varepsilon^{\tilde{a}} }.
$$ 
The above estimate utilizes $\ell -1 \geq 8 \ge
(4-b)
=
2/{\tilde{a}}$ and  $J<C\mathcal{M}$ in
\eqref{juttnerB}. 
%with \eqref{assumption-T}.  
Additionally, since
$
\frac{1}{{\tilde{a}} } 
=
2-\frac{b}{2},
$
we have
the following weight estimates:
$$
w  
= 
w^{1/{\tilde{a}}} w^{b/2-1}
\le
\frac{\kappa^{1/{\tilde{a}}}}{\varepsilon} w^{b/2-1}
\lesssim
\frac{\kappa^{1/{\tilde{a}}}}{\varepsilon} (P^0)^{b/2},
\quad
\text{if}
\quad
w(\barp)\le \frac{\kappa}{\varepsilon^{\tilde{a}}}.
$$
With these two computations, we estimate
\begin{multline*}
\left|
\left\langle \left(\frac{\{\partial _{t}+\phat\cdot \partial _{\barx}\}\sqrt{\mathcal{M} }}{\sqrt{%
\mathcal{M} }}\right)f^{\varepsilon },f^{\varepsilon }\right\rangle_{\barx;\barp}
\right|
\\
\lesssim
\frac{\varepsilon ^{2}}{\kappa^{2/{\tilde{a}} }}\|h^{\varepsilon }\|_{\infty,\ell}
\|f^{\varepsilon }\|_{2}
+
C_\infty(n,u,\theta) \|\sqrt{w}f^{\varepsilon }
\|_{L^2({w(\barp)\leq \frac{\kappa }{{\varepsilon^{\tilde{a}}  }}})}^{2}
\\
\leq C_{\kappa }\varepsilon^{2}
\|h^{\varepsilon }\|_{\infty,\ell}
\|f^{\varepsilon }\|_{2}
%\\
+C
\|\sqrt{w}\mathbf{P}f^{\varepsilon }\|_{L^2({w(\barp)\leq \frac{\kappa }{{\varepsilon^{\tilde{a}}  }}})}^{2}
%\\
+
C\|\sqrt{w}\{\mathbf{I}-\mathbf{P}\}f^{\varepsilon }\|_{L^2({w(\barp)\leq \frac{\kappa }{{\varepsilon^{\tilde{a}}  }}})}^{2}
\\
\leq 
C_{\kappa }\varepsilon ^{2}\|h^{\varepsilon }\|_{\infty,\ell}
\|f^{\varepsilon }\|_{2}+C\|f^{\varepsilon }\|_{2}^{2}+
C
\frac{\kappa^{1/{\tilde{a}} }}{\epsilon}
\|\{\mathbf{I}-\mathbf{P}\}f^{\varepsilon }\|_{\nu}^{2}.
\end{multline*}
Choosing $\kappa$ to be sufficiently small, these are the estimates that we will use for the term involving derivatives of the local Maxwellian $\mathcal{M}.$

We use Lemma \ref{newNONlin}  and \eqref{remH} to obtain for $\ell \ge 9$ that
\begin{equation*}
\left| 
\varepsilon^{2}
\langle \Gamma (f^{\varepsilon },f^{\varepsilon}),f^{\varepsilon }\rangle_{\barx;\barp}
\right|
\lesssim
\sqrt{\varepsilon }
\|\varepsilon ^{3/2}h^{\varepsilon }\|_{\infty,\ell}
\|f^{\varepsilon }\|_{2}^{2}.
\end{equation*}
Above we have also used \eqref{juttnerB} with $J \le C \mathcal{M}$.
We utilize the second estimate in Lemma \ref{newNONlin} and Proposition \ref{iterateBDS} to achieve
\begin{multline*}
\left|
\sum_{i=1}^6\varepsilon^{i-1}
\left\{\left\langle \Gamma \left({ F_{i}}/{\sqrt{\mathcal{M} }},f^{\varepsilon }\right),f^{\varepsilon } \right\rangle_{\barx;\barp}
+
\left\langle \Gamma
\left(f^{\varepsilon },{ F_{i}}/{\sqrt{\mathcal{M} }}\right),f^{\varepsilon }\right\rangle_{\barx;\barp}\right\}
\right|
\\
\lesssim
\sum_{i=1}^6\varepsilon^{i-1}
\|f^{\varepsilon }\|_{\nu }^{2}
\\
\lesssim
\|\mathbf{P}f^{\varepsilon }\|_{\nu }^{2}
+
\|\{\mathbf{I}-\mathbf{P}\}f^{\varepsilon }\|_{\nu }^{2}
\lesssim
\|f^{\varepsilon }\|_{L^2}^{2}
+
\frac{\varepsilon}{\varepsilon}\|\{\mathbf{I}-\mathbf{P}\}f^{\varepsilon }\|_{\nu }^{2}.
\end{multline*}
We have used Proposition \ref{iterateBDS}, \eqref{juttner}, and \eqref{juttnerB} to conclude that
$
{F_{i}}/{\sqrt{\mathcal{M} }}
$
is a rapidly decaying function as in the statement of Lemma \ref{newNONlin}.
Similar to the above estimates, we have $\langle \varepsilon ^{2}\bar{A},f^{\varepsilon }\rangle_{\barp}
\lesssim
 \varepsilon ^{2} \|f^{\varepsilon }\|_{L^2}
\lesssim
 \|f^{\varepsilon }\|_{L^2}.$ 
Note that $\frac{\varepsilon}{\varepsilon} =1$ was added above so that we can obtain the estimate in Lemma \ref{L2} by absorbing this term into the l.h.s. of the inequality for $\varepsilon > 0$ sufficiently small.
In particular, we conclude our estimate  by choosing 
$\kappa$ small and then supposing that $\varepsilon \le \frac{\delta_0}{2C}$.
\qed \\

We are now ready to consider the $L^{\infty }$ estimate for $h^{\protect\varepsilon }$ in Lemma \ref{L0}.  We first expand
\begin{equation*}
-{J}^{-1/2}\{\mathcal{Q}(\mathcal{M} ,\sqrt{J}h)
+
\mathcal{Q}(\sqrt{J}h,\mathcal{M} )\}
=
\nu (\mathcal{M} )h-K(h),
\end{equation*}%
where $K=K_2-K_1$. This will be an important term in the equation for the remainder \eqref{remainder} once we plug in the ansatz \eqref{remH}.  This is observed in the proof of Lemma \ref{L0}. The operators $K_1(h)$ and $K_2(h)$ are defined as
\begin{align*}
K_1(h) 
&\eqdef
{J}^{-1/2}\mathcal{Q}^-(\mathcal{M} ,\sqrt{J}h),
\\
K_2(h) 
&\eqdef
{J}^{-1/2}\{\mathcal{Q}^+(\mathcal{M} ,\sqrt{J}h)
+
\mathcal{Q}^+(\sqrt{J}h,\mathcal{M} )\},
\end{align*}
while 
\begin{align*}
\nu (\mathcal{M})
	&\eqdef {J}^{-1/2}\mathcal{Q}^-(\sqrt{J},\mathcal{M} )
	= \mathcal{Q}^-(1,\mathcal{M}).
\end{align*}
Above, the operators $\mathcal{Q}^\pm$ are the usual gain and loss parts of $\mathcal{Q}$ from \eqref{collisionCM}.
More specifically, the operators $K_{i}$ can be expressed as
\begin{align*}
K_1(h)
 & \eqdef 
\int_{\mathbb{R}^3\times \mathbb{S}^{2}}
~ d\omega d\barq
~ v_{\o} ~ \sigma(\varrho,\vartheta)
~ \left\{
\sqrt{J(\barq)}\frac{\mathcal{M}(\barp)}{\sqrt{J(\barp)}}h(\barq)
\right\},
\\
K_2(h)
 & \eqdef 
\int_{\mathbb{R}^3\times \mathbb{S}^{2}} ~ d\omega d\barq
~ v_{\o} ~ \sigma(\varrho,\vartheta) ~
\left\{
\mathcal{M}(\barq')\frac{\sqrt{J(\barp')}}{\sqrt{J(\barp)}} 
h(\barp')
\right\}
\\
& \ \ \ +
\int_{\mathbb{R}^3\times \mathbb{S}^{2}} ~ d\omega d\barq
~ v_{\o} ~ \sigma(\varrho,\vartheta) ~
\left\{
\mathcal{M}(\barp')\frac{\sqrt{J(\barq')}}{\sqrt{J(\barp)}} 
h(\barq')
\right\}. 
\end{align*}
Given any small number $\eta>0$, we choose a smooth cut-off function $\chi =\chi (\varrho)$ satisfying
\begin{equation}
\chi (\varrho)=
\left\{
\begin{array}{cl}
 1 & {\rm if } ~~  \varrho \ge 2\eta, 
 \\
  0 &
  {\rm if } ~~ \varrho \le \eta.  
\end{array}
\right.
\notag
\end{equation}
This Lorentz invariant cut-off function was previously used in \cite{strainSOFT}.
We now define
\begin{gather}
K_2^{1-\chi}(h) 
\eqdef
\int_{\mathbb{R}^3\times \mathbb{S}^{2}} ~ d\omega d\barq
~ \left( 1 - \chi(\varrho) \right)
~ v_{\o} ~ \sigma(\varrho,\vartheta) ~
\mathcal{M}(\barq')\frac{\sqrt{J(\barp')}}{\sqrt{J(\barp)}} 
h(\barp')
\notag
\\
+
\int_{\mathbb{R}^3\times \mathbb{S}^{2}} ~ d\omega d\barq
~ \left( 1 - \chi(\varrho) \right)
~ v_{\o} ~ \sigma(\varrho,\vartheta) ~
\mathcal{M}(\barp')\frac{\sqrt{J(\barq')}}{\sqrt{J(\barp)}} 
h(\barq').
\label{cut2K}
\end{gather}
We also define $K_1^{1-\chi}(h)$ in the same way.
We will use the splitting
$
K
=
K^{1-\chi}
+
K^{\chi}.
$
We will use the Hilbert-Schmidt form for $K^\chi$, it is given by
$$
K^\chi (h) = 
\int_{\mathbb{R}^3}d\barq~ k^\chi(\barp,\barq) ~ h(\barq),
\quad
(i=1,2).
$$
This form is computed explicitly in \cite[Appendix]{strainSOFT}; see also \cite{sdGsvLwvW1980}. Since the closed form expression for the kernel $k^\chi(\barp,\barq)$ can be quite complicated, we simply state the following useful estimate:

\begin{lemma}
\label{boundK2}  
\cite[Lemma 3.2]{strainSOFT}.   
There exists a constant 
$
\zeta
>0
$
such that the kernel enjoys the estimate
$$
 \left| k^{\chi}(\barp,\barq) \right|
\lesssim
 \left( P^0 Q^0 \right)^{-\zeta}\left( C_1+(P^0 + Q^0)^{-(\beta)_-/2} \right)
e^{-c |\barp- \barq|},
$$
where 
$
(\beta)_- = \max\{ - \beta, 0\},
$
$\beta$ is the parameter, and $C_1$ is the constant from above \eqref{hypNU}.
\end{lemma}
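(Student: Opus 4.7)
The plan is to derive the Hilbert–Schmidt representation of $K^{\chi}=K_{2}^{\chi}-K_{1}^{\chi}$ and then estimate the resulting kernel using (i) the Gaussian decay of $\mathcal{M}$ combined with the conservation laws \eqref{collisionalCONSERVATION}, (ii) the upper bound on $\sigma$ from the Hypotheses on the collision kernel, and (iii) the cutoff $\chi(\varrho)$ to remove the singular region $\varrho\lesssim\eta$. Throughout, I would exploit the assumption \eqref{juttnerB} to compare $\mathcal{M}$ with the global Maxwellian $J$, so that writing things in terms of $J$ (with a slightly smaller exponent $\alpha<1$) is essentially equivalent to writing them in terms of $\mathcal{M}$.

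First I would treat $K_1^{\chi}$, the easier loss-type contribution. Here one simply writes
\[
K_{1}^{\chi}(h)(\barp)=\int_{\mathbb{R}^{3}}k_{1}^{\chi}(\barp,\barq)\,h(\barq)\,d\barq,
\]
with $k_{1}^{\chi}(\barp,\barq)=\sqrt{J(\barq)}\,\mathcal{M}(\barp)J(\barp)^{-1/2}\int_{\mathbb{S}^{2}}(1-\chi)v_{\o}\sigma\,d\omega$. Using \eqref{juttnerB} we bound $\mathcal{M}(\barp)J(\barp)^{-1/2}$ by a constant times $J(\barp)^{\alpha-1/2}$; since $\alpha>1/2$ this provides Gaussian decay in $\barp$. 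The angular integral is controlled by the cross-section hypothesis and, crucially, the upper bound on $b$ ensures integrability near $\varrho=0$. The resulting factor $\sqrt{J(\barq)J(\barp)^{2\alpha-1}}$ dominates $e^{-c|\barp-\barq|}$ after absorbing polynomial weights into $(P^{0}Q^{0})^{-\zeta}$ for some small $\zeta>0$.

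The main work is in $K_{2}^{\chi}$, which involves the post-collisional variables via \eqref{postCOLLvelCMsec2}. Following the relativistic analogue of the Carleman representation developed in the appendix of \cite{strainSOFT}, I would change variables from $(\barq,\omega)$ to $(\barp',\barq')$ using \eqref{collisionalCONSERVATION} and the Lorentz-invariant measure identity $d\barq\,d\omega / (\text{Jacobian}) \sim d\barp'\,d\barq'/(\text{weights})$. After this reduction, $K_{2}^{\chi}$ takes the form $\int k_{2}^{\chi}(\barp,\barq)h(\barq)\,d\barq$, where the new kernel is an integral of $v_{\o}\sigma$ against a Maxwellian factor over the three-dimensional ``collision surface'' fixed by $P+Q=P'+Q'$. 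The Gaussian piece is then, up to constants,
\[
\mathcal{M}(\barq')^{1/2}\mathcal{M}(\barp')^{1/2}\leq C\,J(\barp)^{\alpha/2-c_{0}}J(\barq)^{\alpha/2-c_{0}}e^{-c|\barp-\barq|},
\]
where the last exponential factor is extracted from the conservation of energy $P^{0}+Q^{0}=P'^{0}+Q'^{0}$ together with the strict inequality $\sqrt{m_{0}^{2}c^{2}+|\barp|^{2}}+\sqrt{m_{0}^{2}c^{2}+|\barq|^{2}}\geq|\barp-\barq|+$ const; this geometric inequality is the relativistic counterpart of the non-relativistic Grad trick.

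The hard part, as always in this circle of estimates, will be extracting the polynomial gain $(P^{0}Q^{0})^{-\zeta}$ together with the correct soft-potential weight $(P^{0}+Q^{0})^{-(\beta)_{-}/2}$ from the surface integral. Here one uses $\sigma\leq(C_{1}\varrho^{a}+C_{2}\varrho^{-b})\sigma_{0}(\vartheta)$, the cutoff $\chi$ to ensure $\varrho\geq\eta$ (eliminating the $\varrho^{-b}$ singularity on the support of $K^{\chi}$), and the explicit relation $\nu(\barp)\sim(P^{0})^{\beta/2}$ from \eqref{hypNU} to book-keep the weights. The angular integral is split based on the size of $\vartheta$ relative to $\eta$ and estimated using $\sigma_{0}(\vartheta)\leq\sin^{\gamma}\vartheta$ with $\gamma>-2$. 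Combining these pieces with the Gaussian factor yields the stated bound; since this is precisely \cite[Lemma 3.2]{strainSOFT}, I would at this stage simply invoke that result rather than reproducing its lengthy book-keeping.
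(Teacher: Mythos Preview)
The paper does not prove this lemma at all: it is stated as a direct citation of \cite[Lemma 3.2]{strainSOFT}, with a brief remark that the soft-potential version in \cite[Lemma 3.4]{strainSOFT} extends immediately to the general hypotheses on $\sigma$, and a formula for the admissible $\zeta$. Your proposal ultimately does the same thing---you invoke \cite{strainSOFT} at the end---so the approaches agree in substance.

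That said, your sketch contains a confusion worth flagging. You write the kernel of $K_1^{\chi}$ with a factor $(1-\chi)$ and then appeal to ``integrability near $\varrho=0$'' via the upper bound on $b$. But $K^{\chi}$ carries the factor $\chi(\varrho)$, which is supported on $\varrho\geq\eta$; there is no small-$\varrho$ issue for $K^{\chi}$ at all. (The small-$\varrho$ region is exactly what $K^{1-\chi}$ captures, and that operator is handled separately by Lemma~\ref{boundKinfX}.) Similarly, for $K_2^{\chi}$ the Maxwellian weights are $\mathcal{M}(\barq')\sqrt{J(\barp')/J(\barp)}$ and $\mathcal{M}(\barp')\sqrt{J(\barq')/J(\barp)}$, not the symmetric product $\mathcal{M}(\barq')^{1/2}\mathcal{M}(\barp')^{1/2}$ you wrote; the asymmetry matters when you try to extract the exponential factor $e^{-c|\barp-\barq|}$ via collisional conservation. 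None of this affects the conclusion since you are citing the result anyway, but if you intend the sketch to stand on its own you should correct these points.
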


Note that in the notation of this paper we use $P^0$ to denote the quantity which is called  $p_0$ in \cite{strainSOFT}.
The estimate above is proved in \cite[Lemma 3.4]{strainSOFT} for the case of soft potentials, i.e. $(\beta)_- =b,$ $a= 0,$ 
and $C_1 = 0$ in \eqref{hypNU}.  However the generalization to the case above is immediate and follows directly from the proof. In the case above, we have the exact estimate
$
\zeta = \min\{1-(a+(\gamma)_-)/2, ~ \min\left\{2-|\gamma|, 4-b,2\right\}/4\}.
$
We will now quote the estimate of the operator $K^{1-\chi}$ as follows:

\begin{lemma}
\label{boundKinfX}  
\cite[Lemma 4.6]{strainSOFT}. 
Fix $\ell \ge 0$.
Then given any small $\eta >0$, we have
$$
\left| w_\ell(\barp) K^{1-\chi}(h(\barp)) \right| 
\le
\eta e^{- \gamma P^0} \| h\|_{\infty}.
$$
Above the constant $\gamma >0$ is independent of $\eta$.
\end{lemma}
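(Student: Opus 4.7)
The plan is to exploit the fact that the cutoff $1-\chi(\varrho)$ restricts the integration in \eqref{cut2K} to the small relative-momentum set $\lbrace \varrho \le 2\eta \rbrace$. Since the integrand carries exponential decay in the momenta and the volume of this set (properly weighted) is $O(\eta^{\text{positive}})$, the product gives the bound $\eta e^{-\gamma P^0}\|h\|_\infty$ once one tracks all powers of $\varrho$ carefully.

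First I would split $K^{1-\chi}= K_2^{1-\chi}-K_1^{1-\chi}$ and treat each piece analogously. In each integrand, I would pull out the unweighted norm $\|h\|_\infty$, leaving a pointwise integral in $\barp$ with weight combinations of the form $\mathcal{M}(\barq')\sqrt{J(\barp')}/\sqrt{J(\barp)}$ (and the corresponding expressions for the other pieces). Using the upper bound in \eqref{juttnerB} we estimate $\mathcal{M}\le CJ^{\alpha}$ with $\alpha\in(1/2,1)$, and the conservation law \eqref{collisionalCONSERVATION} combined with the identity $J(\barp')J(\barq')=J(\barp)J(\barq)$ for the \emph{global} Maxwellian $J$ lets us rewrite the combined weights as a pure power of $J(\barp)$ and $J(\barq)$ with an exponent strictly less than $1$. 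Choosing $\gamma>0$ strictly less than $c/(k_B\Temp_M)$, we can absorb the polynomial weight $w_\ell(\barp)$ into the exponential loss and factor out a pointwise $e^{-\gamma P^0}$, leaving an integral in $\barq,\omega$ whose remaining weight in $\barq$ still decays exponentially.

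Second, on the support of $1-\chi$ we have $\varrho\le 2\eta$, so $s=\varrho^2+4c^2$ is uniformly bounded, and from \eqref{moller} together with the hypothesis on $\sigma$ we get
\begin{equation*}
v_{\o}\,\sigma(\varrho,\vartheta)\;\lesssim\;\frac{1}{P^0Q^0}\bigl(\varrho^{a+1}+\varrho^{1-b}\bigr)\sigma_0(\vartheta).
\end{equation*}
The angular integral $\int_{\mathbb{S}^2}\sigma_0(\vartheta)\,d\omega$ is finite thanks to the restriction $\gamma>-2$. I would then change variables in $\barq$ to center-of-momentum coordinates, whose Jacobian provides a factor of $\varrho^2$ in the radial part. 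The remaining radial integral $\int_0^{2\eta}(\varrho^{a+1}+\varrho^{1-b})\varrho^2\,d\varrho\lesssim \eta^{a+3}+\eta^{4-b}$ is precisely finite and vanishing with $\eta$ under the assumptions $0\le a<\min(2,2+\gamma)$ and $0\le b<\min(4,4+\gamma).$ Combined with the pointwise exponential factor from the previous step, this yields a bound of the form $C(\eta^{a+3}+\eta^{4-b})e^{-\gamma P^0}\|h\|_\infty$, which for sufficiently small $\eta$ is dominated by $\eta e^{-\gamma P^0}\|h\|_\infty$.

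The main obstacle will be the bookkeeping around the change of variables, particularly ensuring that the Jacobian from passing to center-of-momentum coordinates produces the $\varrho^2$ factor \emph{uniformly in} $\barp$ so that the estimate holds pointwise, and that the exponential factor in $P^0$ is genuinely independent of $\eta$. The latter reduces to choosing $\gamma$ strictly less than the natural exponential rate dictated by $J^{\alpha}$; independence from $\eta$ then follows because all the $\eta$-dependence is confined to the radial volume of the cutoff region. Once these points are handled, the conclusion is immediate by choosing $\eta$ small.
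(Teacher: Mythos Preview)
The paper does not prove this lemma in-line; it is quoted from \cite[Lemma 4.6]{strainSOFT}. Your strategy---Maxwellian weights supply exponential decay, while the $\varrho$-volume of the cutoff region $\{\varrho\le 2\eta\}$ supplies smallness---is the expected one, but one step does not go through as written. For the $K_2^{1-\chi}$ pieces, the weight manipulation you describe does \emph{not} produce a power of $J(\barp)$: combining $\mathcal{M}\le CJ^{\alpha}$ with the collision invariant $J(\barp')J(\barq')=J(\barp)J(\barq)$ yields at best $C\,J^{\alpha-1/2}(\text{post})\,J^{1/2}(\barq)$, which carries exponential decay in $Q^0$ and a post-collisional energy, not in $P^0$. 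The missing observation is that on $\{\varrho\le 2\eta\}$ all four energies are mutually comparable: in the center-of-momentum frame each spatial momentum has magnitude $\varrho/2\le\eta$ while each energy equals $\sqrt{s}/2\ge m_0 c$, so after boosting back one finds $|Q^0-P^0|\le C(\eta/m_0 c)\,P^0$. This is what converts $J^{1/2}(\barq)$ into a genuine $e^{-\gamma P^0}$ with $\gamma$ independent of $\eta$. (For $K_1^{1-\chi}$ the factor $\mathcal{M}(\barp)/\sqrt{J(\barp)}\le CJ^{\alpha-1/2}(\barp)$ already gives decay in $P^0$, so your argument is fine there.)

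The center-of-momentum change of variables is also more delicate than you indicate: the boost to that frame depends on $\barq$, so one cannot simply invoke Lorentz invariance of $d\barq/Q^0$. What is actually needed is a direct coordinate computation showing that, with $\barp$ fixed, the invariant measure $d\barq/Q^0$ contributes a factor $\varrho^2\,d\varrho$ in the radial direction, which is exactly what makes $\int_0^{2\eta}\varrho^{3-b}\,d\varrho$ finite for $b<4$. This is standard in the relativistic Boltzmann literature, and once it and the energy-comparability point above are supplied, your outline closes.
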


We will also use the following nonlinear estimate:

\begin{lemma}
\label{sSOFT}
\cite[Lemma 5.2]{strainSOFT}.
For any $\ell \geq 0,$
we have the following $L^\infty$ estimate for the nonlinear Boltzmann collision operator:
$$
\left| w_\ell(\barp)
{J}^{-1/2}(\barp)
\mathcal{Q}\left(h_1\sqrt{J},h_2\sqrt{J}\right)(\barp)
\right| 
\lesssim
 \nu(\barp) 
\|  h_1 \|_{\infty,\ell} \| h_2 \|_{\infty,\ell}.
$$
\end{lemma}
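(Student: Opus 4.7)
The plan is to split the collision operator $\mathcal{Q} = \mathcal{Q}^+ - \mathcal{Q}^-$ into its gain and loss parts and estimate each separately, exploiting the pointwise bound $|h_i(\barp)| \le w_\ell(\barp)^{-1} \|h_i\|_{\infty,\ell}$ and the collisional invariance of the global Maxwellian $J$.

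First I would treat the loss term. Since $\mathcal{Q}^-(h_1\sqrt{J}, h_2\sqrt{J})(\barp) = \sqrt{J(\barp)}\, h_1(\barp) \int v_\o \sigma(\varrho,\vartheta) h_2(\barq) \sqrt{J(\barq)}\, d\barq\, d\omega$, multiplying by $w_\ell(\barp) J^{-1/2}(\barp)$ and absorbing $\|h_1\|_{\infty,\ell}$ from the pointwise bound on $h_1(\barp)$ (which eats the $w_\ell(\barp)$ weight) reduces the matter to bounding
\[
\int_{\mathbb{R}^3\times\mathbb{S}^2} v_\o\, \sigma(\varrho,\vartheta)\, w_\ell(\barq)^{-1} \sqrt{J(\barq)}\, d\barq\, d\omega
\]
by $C \nu(\barp)$. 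Since $\sqrt{J}$ is itself a global relativistic Maxwellian (at twice the temperature $\Temp_M$), this follows directly from the upper bound in the hypothesis \eqref{hypNU}, applied with $\sqrt{J}$ in place of $J$, up to an inessential constant.

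Next I would handle the gain term using the collisional invariance $J(\barp')J(\barq') = J(\barp)J(\barq)$, which is an immediate consequence of \eqref{collisionalCONSERVATION} applied in the exponent of the Maxwellian $J(\cdot) = e^{-cP^0/(k_B\Temp_M)}$. This yields
\[
w_\ell(\barp) J^{-1/2}(\barp) \mathcal{Q}^+(h_1\sqrt{J}, h_2\sqrt{J})(\barp) = w_\ell(\barp)\int v_\o\, \sigma(\varrho,\vartheta)\, h_1(\barp') h_2(\barq') \sqrt{J(\barq)}\, d\barq\, d\omega.
\]
Applying the pointwise bounds to $h_1(\barp')$ and $h_2(\barq')$ introduces the weight ratio $w_\ell(\barp)/\bigl(w_\ell(\barp') w_\ell(\barq')\bigr)$, which I would control by a constant via the elementary inequality $1 + |\barp|^2 \le C(1+|\barp'|^2)(1+|\barq'|^2)$; this inequality follows from $P^0 \le P'^0 + Q'^0$, which is in turn a consequence of \eqref{collisionalCONSERVATION} together with the timelike, future-directed character of the four-momenta. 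The remaining integral is then bounded by $C\nu(\barp)$ exactly as in the loss estimate.

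The main delicate point is verifying the weight inequality $w_\ell(\barp) \le C w_\ell(\barp') w_\ell(\barq')$ in the relativistic setting, since one must convert carefully between the spatial momenta $\barp$ (in which $w_\ell$ is defined) and the energies $P^0$ (in which the conservation law is most natural). Beyond that bookkeeping, the argument consists of standard manipulations and an application of the upper bound in \eqref{hypNU}; adding the loss and gain contributions then yields the desired bound $C\nu(\barp)\|h_1\|_{\infty,\ell}\|h_2\|_{\infty,\ell}$.
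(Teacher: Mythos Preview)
The paper does not supply a proof of this lemma; it is quoted directly from \cite[Lemma~5.2]{strainSOFT}. Your argument is correct and is essentially the standard one: split $\mathcal{Q}$ into gain and loss, use the collisional conservation $J(\barp')J(\barq')=J(\barp)J(\barq)$ to transfer the Maxwellian factor back to the pre-collisional variables in the gain term, and control the weight ratio $w_\ell(\barp)/(w_\ell(\barp')w_\ell(\barq'))$ via $P^0 \lesssim P'^0 Q'^0$ (the present paper invokes exactly this inequality, citing \cite[Lemma~2.2]{MR1211782}, in the proof of Lemma~\ref{newNONlin}).

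One small point of precision: the bound $\int v_\o\,\sigma(\varrho,\vartheta)\,\sqrt{J(\barq)}\,d\barq\,d\omega \lesssim \nu(\barp)$ does not follow literally by ``applying \eqref{hypNU} with $\sqrt{J}$ in place of $J$,'' since \eqref{hypNU} is a hypothesis about the fixed Maxwellian $J$ rather than a theorem valid for arbitrary Maxwellians. What you actually need is the upper-bound estimate on collision-frequency-type integrals that follows from the structural hypotheses on $\sigma$; the paper cites such estimates as \cite[Lemma~3.1]{strainSOFT} in the proof of Lemma~\ref{newNONlin}, and they apply equally well with $\sqrt{J}$ (or any polynomial weight times $\sqrt{J}$) in the integrand. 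With that adjustment your proof is complete.
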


We are now ready to prove Lemma \ref{L0}. \\

\noindent {\it Proof of Lemma \ref{L0}}.
From \eqref{remainder} and \eqref{remH}, we obtain
\begin{multline*}
\partial _{t}h^{\varepsilon }+\phat\cdot \partial _{\barx}h_{{}}^{\varepsilon }
+
\frac{\nu(\mathcal{M})}{\varepsilon }h^{\varepsilon}
-
\frac{1}{\varepsilon }K(h^{\varepsilon })
=
\frac{\varepsilon^{2}}{\sqrt{J}}
\mathcal{Q}\left(h^{\varepsilon }\sqrt{J},h^{\varepsilon }\sqrt{J}\right)
\\
+
\sum_{i=1}^6
\varepsilon^{i-1}\frac{1}{\sqrt{J}} 
\left\{
\mathcal{Q}\left(F_i,h^{\varepsilon }\sqrt{J}\right)
+
\mathcal{Q}\left(h^{\varepsilon }\sqrt{J}, F_i\right)\right\}
+
\varepsilon ^{2}\tilde{A},
\end{multline*}
with
$$
\tilde{A}\eqdef-\frac{\varepsilon}{\sqrt{J}}\{\partial _{t}+\phat\cdot \partial _{\barx}\}F_{6}
+
\sum_{i+j > 6,i\leq 6,j\leq 6}\varepsilon^{i+j-6}
\frac{1}{\sqrt{J}}\mathcal{Q}(F_{i},F_{j}).
$$
We will prove Lemma \ref{L0} by iterating twice the representation of solutions to this equation in terms of the Duhamel formula.

For purposes of the proof, we define $y_1 \eqdef \barx - \phat(t-s)$.   Let $K = K^{1-\chi} + K^{\chi}$ be the splitting 
from \eqref{cut2K} and define 
$$
\widetilde{\nu}(t,s)
\eqdef
\int_s^t ~ d \tau ~ \nu (\mathcal{M} )(\tau,\barx ).
%(\tau,\barx - \phat \tau).
$$
Furthermore, using \eqref{juttnerB} we have
$
\nu (J )(\barp) \approx
\nu(\mathcal{M})(\barp).
$
>From this and \eqref{hypNU}, we have that
$$
\widetilde{\nu}(t,s)
\approx 
\nu (J)(\barp) (t-s)
\approx 
(P^0)^{\beta/2} (t-s).
$$
Here and below $A\approx B$ indicates that $\exists \ C \geq 1$ such that 
$\frac{1}{C} A \le B \le C A$.

Now by the Duhamel formula we have
\begin{multline}
 \label{duhamel}
h^{\varepsilon } (t,\barx,\barp) = 
 \exp\Big(-\frac{\widetilde{\nu}(t,0) }{\varepsilon } \Big) ~
h^{\varepsilon}_0(\barx - \phat t,\barp)
 \\
 +
\frac{1}{\varepsilon }
\int_{0}^{t} ~ ds ~ 
\exp\Big(-\frac{\widetilde{\nu}(t,s) }{\varepsilon } \Big)
K^{1-\chi}(h^{\varepsilon }) (s,y_1,\barp)
 \\
+
\frac{1}{\varepsilon }
\int_{0}^{t} ~ ds ~ 
\exp\Big(-\frac{\widetilde{\nu}(t,s) }{\varepsilon } \Big)
K^{\chi}(h^{\varepsilon }) (s,y_1,\barp)
 \\
 +\int_{0}^{t} ~ ds ~ 
e^{-\frac{\widetilde{\nu}(t,s)}{\varepsilon }   }
\frac{\varepsilon ^{2} }{\sqrt{J}}
\mathcal{Q}\left(h^{\varepsilon }\sqrt{J},h^{\varepsilon }\sqrt{J}
\right)(s,y_1,\barp)
 \\
 + \int_{0}^{t} ~ ds ~ 
\exp\Big(-\frac{\widetilde{\nu}(t,s)}{\varepsilon }   \Big)
\sum_{i=1}^6 \varepsilon^{i-1}\frac{1}{\sqrt{J}}
\left\{\mathcal{Q}\left(F_i,h^{\varepsilon }\sqrt{J}\right) +
 \mathcal{Q}\left(h^{\varepsilon }\sqrt{J}, F_i\right)\right\}(s,y_1,\barp)
 \\
+\int_{0}^{t} ~ ds ~ \exp\Big(-\frac{\widetilde{\nu}(t,s)}{\varepsilon} \Big) \varepsilon ^{2}%
\tilde{A}(s,y_1,\barp).
\end{multline}
We will now simply write $\widetilde{\nu}(t,s)$ as
$
\nu  (t-s).
$
We will use the following basic estimate several times below
\begin{gather*}
\int_{0}^{t}~ ds ~ \exp\Big(-\frac{\nu (t-s)}{\varepsilon }\Big) ~ \nu 
\lesssim
\varepsilon .
\end{gather*}
We will now estimate each of the terms in \eqref{duhamel}.

Given $\eta >0$, we recall the splitting 
$K = K^{1-\chi} + K^{\chi}$
as in \eqref{cut2K}.
>From  Lemma \ref{boundKinfX}, for any $\eta>0$, the $K^{1-\chi}$ term in \eqref{duhamel} multiplied by $w_\ell$ is bounded
as
\begin{multline*}
\left|
\frac{w_\ell}{\varepsilon }
\int_{0}^{t} ~ ds ~ 
\exp\Big(-\frac{\widetilde{\nu}(t,s) }{\varepsilon } \Big)
K^{1-\chi} (h^{\varepsilon }) (s,y_1,\barp)
\right|
\\
\lesssim
\frac{\eta}{2}
\sup_{0\leq t\leq T }||h^{\varepsilon }(t)||_{\infty,\ell}
\int_{0}^{t} ~ ds ~ \exp\Big(-\frac{\nu (t-s)}{\varepsilon
}\Big) \nu ~
\lesssim
\eta~\varepsilon\sup_{0\leq t\leq T
}||h^{\varepsilon }(t)||_{\infty}.
\end{multline*}
Next, we use Lemma \ref{sSOFT} to conclude that
$$
\left|\frac{w_\ell}{\sqrt{J}}
\mathcal{Q}\left(h^{\varepsilon }\sqrt{J},h^{\varepsilon }\sqrt{J}\right)
\right|
\lesssim
\nu(\barp) ~ \|h^{\varepsilon }\|_{\infty,\ell }^{2}.
$$  
Then the fourth line in \eqref{duhamel} is
bounded by
\begin{gather}
C\varepsilon ^{2}\int_{0}^{t} ~ ds ~ \exp\Big(-\frac{\nu (t-s)}{\varepsilon }\Big)
\nu ~\|h^{\varepsilon }(s)\|_{\infty,\ell }^{2}   
\label{h2} 
%\\
\lesssim
\varepsilon ^{3}\sup_{0\leq s\leq t}\|h^{\varepsilon
}(s)\|_{\infty,\ell }^{2}.
%\nonumber
\end{gather}
For the following terms in \eqref{duhamel}, from Lemma \ref{sSOFT} again, 
we have
\begin{multline*}
\left|
\sum_{i=1}^6 \varepsilon^{i-1}\frac{w_\ell}{\sqrt{J}}
\left\{ 
\mathcal{Q}\left(F_i,h^{\varepsilon }\sqrt{J}\right)
+
\mathcal{Q}\left(h^{\varepsilon }\sqrt{J},F_i\right)\right\}
\right|
\\
\le
C
\sum_{i=1}^6 ~ \varepsilon^{i-1} ~
\nu (\barp) ~ \|h^{\varepsilon }\|_{\infty,\ell }~ \left\| F_{i} / \sqrt{J} \right\|_{\infty,\ell }
\le
C \nu (\barp) ~ \|h^{\varepsilon }\|_{\infty,\ell }.
\end{multline*}
We have used Proposition \ref{iterateBDS} and the upper bound in \eqref{juttnerB}
to conclude that 
$
\left\| F_{i}/ \sqrt{J} \right\|_{\infty,\ell }
\le  C.
$
Thus, the fifth line in \eqref{duhamel} is bounded by
\begin{equation}
\int_{0}^{t} ~ ds ~ \exp\Big(-\frac{\nu (t-s)}{\varepsilon }\Big)\nu  \|h^{\varepsilon }(s)\|_{\infty,\ell }
~%\left( \|h^{\varepsilon }(s)\|_{\infty,\ell }+ \|h^{\varepsilon }(s)\|_{\infty,\ell }^2 \right)
\le
C
\varepsilon
\sup_{0\leq t\leq T}  \|h^{\varepsilon }(t)\|_{\infty,\ell }.
%\left( \|h^{\varepsilon }(t)\|_{\infty,\ell }+ \|h^{\varepsilon }(t)\|_{\infty,\ell }^2 \right).  
\label{h1}
\end{equation}
As above, the last line in (\ref{duhamel}) is clearly bounded by $C\varepsilon^{3}.$

We have now estimated all the terms in \eqref{duhamel} save one, which we denote by $I_{\gamma},$ and which is defined by
\begin{gather*}
I_{\gamma} \eqdef 
 \frac{1}{\varepsilon }
 \int_{0}^{t} ~ ds ~ 
\exp\Big(-\frac{\widetilde{\nu}(t,s) }{\varepsilon } \Big)
K^{\chi}\left(h^{\varepsilon }\right) (s,y_1,\barp). 
\end{gather*}
Collecting the above estimates,
 we have established
\begin{multline*}
\sup_{0\le t \le T} \| \varepsilon^{3/2} h^\varepsilon (t) \|_{\infty ,\ell}
\le
C\varepsilon \sup_{0\le t \le T} \| \varepsilon^{3/2} h^\varepsilon (t)  \|_{\infty, \ell} 
+
C\varepsilon^{3} \sup_{0\le t \le T} \| \varepsilon^{3/2} h^\varepsilon (t) \|_{\infty, \ell}^2
+C\varepsilon^{3}
\\
+
C \| \varepsilon^{3/2} h_0 \|_{\infty, \ell} 
+ C w_\ell(\barp) \varepsilon^{3/2} I_{\gamma}. 
\end{multline*}
To bound $I_{\gamma},$ we will input $h^{\varepsilon }$ in the form of
\eqref{duhamel} back into $I_{\gamma}$ just below.

%\newpage
We recall that the kernel of $K^{\chi}(h)$ is $k^{\chi}(\barp,\barq)$.  With this notation,
it follows that
\begin{gather*}
I_{\gamma}
=
\int_{0}^{t} ~ ds ~ 
\exp\Big(-\frac{\widetilde{\nu}(t,s) }{\varepsilon } \Big) \frac{1}{\varepsilon }
\int_{\mathbb{R}^3} ~ d\barq ~
 k^{\chi} (\barp,\barq) ~ h^{\varepsilon }  (s,y_1,\barq).
\end{gather*}
We plug \eqref{duhamel} for $h^\varepsilon$ into the above to obtain
\begin{multline}
 \label{duhamelD}
I_{\gamma}
=
\int_{0}^{t} ~ ds ~ 
\exp\Big(-\frac{\widetilde{\nu}(t,s) }{\varepsilon } \Big) \frac{1}{\varepsilon }
\int_{\mathbb{R}^3} ~ d\barq ~
  k^{\chi} (\barp,\barq) ~ 
  \\
  \times
 \exp\Big(-\frac{\widetilde{\nu}'(\barq)(s,0) }{\varepsilon } \Big) ~
h^{\varepsilon}_0(\barx - \phat(t-s)-\qhat s,\barq)
%\\
 +
I_{\gamma,\gamma}
+
H_{\gamma}.
\end{multline}
We also introduce the notation
$
y_2 
\eqdef
\barx - \phat(t-s)-\qhat(s-s').
$
Furthermore, we define
\begin{multline*}
I_{\gamma,\gamma}
\eqdef
\int_{0}^{t} ~ ds ~ 
\exp\Big(-\frac{\widetilde{\nu}(\barp)(t,s) }{\varepsilon } \Big) \frac{1}{\varepsilon }
\int_{\mathbb{R}^3} ~ d\barq ~
k^{\chi} (\barp,\barq) ~
  \\
  \times
\int_{0}^{s} ~ ds' ~ \exp\Big(-\frac{\widetilde{\nu}'(\barq)(s,s')}{\varepsilon }   \Big)
 \frac{1}{\varepsilon }K^{\chi} \left( h^{\varepsilon }\right) (s',y_2,\barq).
\end{multline*}
Additionally, the term $H_{\gamma}$ can be expressed as
\begin{multline}
 \label{duhamelDH}
H_{\gamma}
=
\int_{0}^{t}  ds ~ 
\exp\Big(-\frac{\widetilde{\nu}(\barp)(t,s) }{\varepsilon } \Big) \frac{1}{\varepsilon }
\int_{\mathbb{R}^3}  d\barq  ~ k^{\chi} (\barp,\barq)
\\
\times
 \int_{0}^{s} ~ ds' ~ \exp\Big(-\frac{\widetilde{\nu}'(\barq)(s,s')}{\varepsilon }   \Big)
\frac{1}{\varepsilon }K^{1-\chi}\left( h^{\varepsilon }\right) (s',y_2,\barq)
\\
+
\int_{0}^{t} ~ ds ~ 
\exp\Big(-\frac{\widetilde{\nu}(\barp)(t,s) }{\varepsilon } \Big) \frac{1}{\varepsilon }
\int_{\mathbb{R}^3} ~ d\barq ~
k^{\chi} (\barp,\barq) ~\int_{0}^{s} ~ ds' ~ \exp\Big(-\frac{\widetilde{\nu}'(\barq)(s,s')}{\varepsilon }   \Big)
 \\
 \times
\frac{\varepsilon ^{2}}{\sqrt{J}}\mathcal{Q}\left(h^{\varepsilon }\sqrt{J},h^{\varepsilon }\sqrt{J}
\right)(s',y_2,\barq)
\\
+
\int_{0}^{t} ~ ds ~ 
\exp\Big(-\frac{\widetilde{\nu}(\barp)(t,s) }{\varepsilon } \Big) \frac{1}{\varepsilon }
\int_{\mathbb{R}^3} ~ d\barq ~
k^{\chi} (\barp,\barq) ~\int_{0}^{s} ~ ds' ~ \exp\Big(-\frac{\widetilde{\nu}'(\barq)(s,s')}{\varepsilon }   \Big)
 \\
 \times
\sum_{i=1}^6 \varepsilon^{i-1}\frac{1}{\sqrt{J}}\left\{
\mathcal{Q}\left(F_i,h^{\varepsilon}\sqrt{J}\right)
+
\mathcal{Q}\left(h^{\varepsilon}\sqrt{J},F_i\right)
\right\}(s',y_2,\barq)
\\
+
\int_{0}^{t} ~ ds ~ 
\exp\Big(-\frac{\widetilde{\nu}(\barp)(t,s) }{\varepsilon } \Big) \frac{1}{\varepsilon }
\int_{\mathbb{R}^3} ~ d\barq ~
k^{\chi} (\barp,\barq) ~
\\
\times
\int_{0}^{s} ~ ds' ~ \exp\Big(-\frac{\widetilde{\nu}'(\barq)(s,s')}{\varepsilon }   \Big)\varepsilon ^{2}%
\tilde{A}(s',y_2,\barq).
\end{multline}
Above we are using the necessary additional notation
$$
\widetilde{\nu}'(\barq)(s,s')
\eqdef
\int_{s'}^s ~ d \tau ~ \nu (\mathcal{M} )(\tau,\barx  ).
%(\tau,\barx - \phat(t-s)-\qhat \tau).
$$
We have
$
\nu (J )(\barq) \approx
\nu(\mathcal{M})(\barq),
$
which implies
$$
\widetilde{\nu}'(\barq)(s,s')
\approx 
\nu (J)(\barq) (s-s'),
$$
which we will now simply write as
$
\nu  (s-s')
$
as we did in the previous case of $\widetilde{\nu}.$

Using similar arguments to
(\ref{h2}) and (\ref{h1}), we can
control all the terms in \eqref{duhamelD} and \eqref{duhamelDH}
 except the second term in \eqref{duhamelD} 
by the following upper bound:
\begin{equation*}
C\left\{\|h_0\|_{\infty ,\ell}+\varepsilon ^{3}\sup_{0\leq
s\leq T}\|h^{\varepsilon }(s)\|_{\infty ,\ell}^{2}+\varepsilon
\sup_{0\leq s\leq T}\|h^{\varepsilon }(s)\|_{\infty,\ell}^{{}}+C\varepsilon ^{3}\right\}.
\end{equation*}
We now concentrate on the second term in \eqref{duhamelD},
$I_{\gamma,\gamma}$.

We claim that for any small $\eta'>0,$ the following estimate holds:
$$
\left| w_\ell I_{\gamma,\gamma} \right|
\le
\eta'
\sup_{0\leq s\leq T}\|h^{\varepsilon
}(s)\|_{\infty ,\ell}
+
C_{\eta'}\sup_{0\leq s\leq T}\|f^{\varepsilon }(s)\|_{2}.
$$
This is proved in \cite[Lemma 4.4]{strainSOFT}.   We note that \cite[Lemma 4.4]{strainSOFT} is explained in detail for the soft potentials and for general momentum weights parametrized by $k\ge 0$. Here we only use the $k=0$ case from  \cite[Lemma 4.4]{strainSOFT}.
Furthermore, this estimate can be easily extended to the full range of hard and soft-potentials.  The proof for the hard potentials follows in exactly the same way as the estimate for the soft potentials, but several technical simplifications of the proof are possible in the hard potential case.

With this last estimate we will now finish the proof. We first collect all of our estimates as follows:
\begin{multline*}
\sup_{0\leq s\leq T}
\| \varepsilon ^{3/2} h^{\varepsilon}(s)\|_{\infty, \ell} 
\le
C \{\eta+\eta'\}
\sup_{0\leq s\leq T}\| \varepsilon^{3/2} h^{\varepsilon}(s)\|_{\infty, \ell}
+
\varepsilon^{7/2} C
\\
+
C_{\eta}
\|\varepsilon ^{3/2}h_{0}\|_{\infty, \ell}
+
\varepsilon^{3/2}~ C~\sup_{0\leq s\leq T}
\| \varepsilon^{3/2} h^{\varepsilon }(s)\|_{\infty, \ell}^{2}
+
C_{\eta'} 
\sup_{0\leq s\leq T}\|f^{\varepsilon }(s)\|_{2}.
\end{multline*}
We now choose $\eta$ and $\eta'$ small
in such a way that
 $
 C\{\eta+\eta'\} < \frac{1}{2}.
 $
 Then
 for sufficiently small $\varepsilon>0,$
 we obtain
\begin{equation*}
\sup_{0\leq s\leq T }
\| \varepsilon ^{3/2} h^{\varepsilon}(s)\|_{\infty,\ell}
\le 
C\left\{\|\varepsilon ^{3/2}h_{0}\|_{\infty,\ell}
+
\sup_{0\leq s\leq T }\|f^{\varepsilon }(s)\|_{2}+\varepsilon^{7/2}\right\},
\end{equation*}
and we conclude our proof.
\qed

\subsection*{Acknowledgments} 
RMS thanks Princeton University, where this project was initiated, for its generous hospitality.
JS echoes the sentiments of RMS, and also thanks Mihalis Dafermos and Willie Wong for useful discussions.  
RMS and JS both thank the University of Cambridge for support during the completion of this article.

\bibliography{hydro}

\end{document}